\documentclass[11pt]{amsart}
\usepackage{geometry}                
\usepackage[latin1]{inputenc}   
\usepackage{mathpazo,cite}  
\usepackage[dvipsnames,cmyk]{xcolor}
\usepackage{url}
\usepackage{amsmath,amssymb,latexsym,mathrsfs,amssymb,amsthm,amsfonts}
\usepackage{enumerate}
\usepackage{enumitem,overpic}
\usepackage{mathtools,mathdots} 
\usepackage[all]{xy}
\usepackage{tikz, tkz-euclide}
\usepackage{setspace}
\usepackage{algorithm}
\usepackage{algpseudocode}

\renewcommand{\vec}{\mathbf}

\newcommand{\lvertiii}{\left\vert\kern-0.25ex\left\vert\kern-0.25ex\left\vert}
\newcommand{\rvertiii}{\right\vert\kern-0.25ex\right\vert\kern-0.25ex\right\vert}

\usepackage[colorlinks=true, linkcolor=MidnightBlue, citecolor=MidnightBlue, urlcolor=MidnightBlue]{hyperref}

\newtheorem{theorem}{Theorem}[section]
\newtheorem{lemma}[theorem]{Lemma}

\newtheorem{corollary}[theorem]{Corollary}

\theoremstyle{definition}
\newtheorem{definition}[theorem]{Definition}

\theoremstyle{remark}
\newtheorem{remark}[theorem]{Remark}

\DeclareMathOperator{\supp}{supp}

\DeclareMathOperator{\tr}{tr}

\newcommand{\bmu}{{\boldsymbol{\mu}}}
\newcommand{\bnu}{{\boldsymbol{\nu}}}

\newcommand{\btau}{{\boldsymbol{\tau}}}
\newcommand{\bgamma}{{\boldsymbol{\gamma}}}

\newcommand{\RightComment}[1]{\hfill {\(\triangleright\) #1}}

\title[Banded Hermitian Matrices, Matrix Orthogonal Polynomials, and the Toda Lattice]{Banded Hermitian Matrices, Matrix Orthogonal Polynomials, and the Toda Lattice}

\author{Charbel Abi Younes}
\address{Charbel Abi Younes: Department of Applied Mathematics, University of Washington, Seattle, WA, USA}
\email{cyounes@uw.edu}

\author{Thomas Trogdon}
\address{Thomas Trogdon: Department of Applied Mathematics, University of Washington, Seattle, WA, USA}
\email{trogdon@uw.edu}

\begin{document}
\begin{abstract}
    We study the direct and inverse spectral theory for a class of finite Hermitian banded matrices. Using the theory of matrix orthogonal polynomials, we provide an explicit procedure for reconstructing a banded matrix from a matrix-valued measure that encodes its spectral data. We establish necessary and sufficient conditions for a measure to be the spectral measure of a matrix in the examined class. We further analyze the connections between this spectral analysis, block tridiagonalization algorithms, and the Toda lattice evolution on banded matrices.
\end{abstract}

\maketitle

\section{Introduction}

Spectral analysis of Jacobi matrices plays a fundamental role in many areas of mathematics. In mathematical physics, inverse spectral theory is used to study integrable nonlinear systems such as the Toda lattice, where the equations of motion are expressed as an isospectral deformation of a Jacobi matrix. In numerical linear algebra, 
the bijection between Jacobi matrices and spectral measures provides a rigorous framework for analyzing Krylov methods, such as the Lanczos algorithm\cite{chen_stability_2024,younes_lanczos-based_2025} and conjugate gradient method\cite{ding_conjugate_2022,deift_conjugate_2021,paquette_universality_2023,ding_riemannhilbert_2024}, using the theory of orthogonal polynomials. In this paper, we aim to examine the spectral and inverse spectral analysis of a broader class of Hermitian banded matrices. We show that working with general bandwidths gives rise to spectral measures that are matrix-valued, connecting naturally to the theory of matrix orthogonal polynomials. Our objective is to fully characterize the connection between these objects by extending the classical strategies used in the tridiagonal case. 

Explicitly, we study the spectral theory of finite $N\times N$ matrices of the form
\begin{align}\label{eq:BandedMatDef}
\vec J=\begin{bmatrix}
\vec A_0 & \vec B_0^* & & & & \\
\vec B_0 & \vec A_1 & \vec B_1^* & & & \\
& \vec B_1 & \vec A_2 & \vec B_2^* & & \\
& & \vec B_2 & \vec A_3 & \ddots & \\
& & & \ddots & \ddots & \vec B_{n-2}^* \\
& & & & \vec B_{n-2} & \vec A_{n-1}
\end{bmatrix},
\end{align}
where, for each $j$, $\vec A_j=\vec A_j(\vec J)$ is Hermitian and each $\vec B_j(\vec J)$ has full rank. We assume that 
\begin{equation}\label{eq:BlocksDim}
\begin{aligned}
    \vec A_j(\vec J) \in \mathbb C^{k\times k} ~ \text{for} ~ j=0,1,\dots,n-2, \quad \vec A_{n-1}(\vec J) \in \mathbb C^{(k-\ell)\times(k-\ell)},\\
    \vec B_j(\vec J)\in \mathbb C^{k\times k} ~\text{for}~ j=0,1,\dots,n-3, \quad \vec B_{n-2}(\vec J)\in \mathbb C^{(k-\ell) \times k},
\end{aligned}
\end{equation}
so that $N = nk - \ell$ with $0\leq\ell< k$. We further assume that $\vec B_j(\vec J)$ is in row echelon form with positive pivots. The precise class of matrices satisfying these conditions is formalized in Definition~\ref{def:JkN}. 

We define a spectral map that assigns to each $\vec J$ a $k\times k$ matrix-valued measure constructed from the eigenvalues of $\vec J$ and the first $k$ components of its normalized eigenvectors. Our main results provide a complete characterization of this map, showing that the associated (matrix) inner product is nondegenerate for polynomials of degree at most $n-2$ and identifying the exact rank deficiency for those of degree $n-1$ (Theorems~\ref{thm:Nd} and \ref{thm:NdEquiv}). We further show that this spectral map is injective (Theorem~\ref{thm:Injective}) and give a reconstruction procedure for the inverse spectral problem using the theory of matrix orthogonal polynomials, proving that each banded matrix in this class is uniquely determined by its spectral measure (Theorem~\ref{thm:InvSpec}). This inverse spectral analysis also allows us to characterize the range of the spectral map and to formulate necessary and sufficient conditions (Corollary~\ref{cor:NecSuffCond}) for a measure to correspond to a matrix of the form \eqref{eq:BandedMatDef}.

Although techniques and results for related classes of matrices have appeared in the literature (see Section~\ref{sec:lit}), to our knowledge, no prior work has leveraged the connection between banded matrices and matrix orthogonal polynomials for finite matrices, particularly in cases where the final blocks are smaller than the preceding ones. An important objective of this work is to also explore the implications of these results in other applications, including the equivalence of block tridiagonalization algorithms and the evolution of the Toda lattice on banded matrices. These connections have not been thoroughly investigated and provide a natural generalization of the classical connections between Jacobi matrices, the Toda flow, and Krylov methods.

The remainder of this paper is organized as follows. In the rest of this section, we review the relevant literature, discuss the connection between banded matrices and block tridiagonalization algorithms such as block Lanczos and the Householder algorithms, and provide background on the Toda lattice, briefly highlighting its generalization to banded matrices. In Section~\ref{sec:measures&polynomials}, we introduce the notion of matrix-valued measures, examine their properties, and define matrix orthogonal polynomials along with their recurrence relations. Section~\ref{sec:spectralmap} is devoted to the spectral analysis of matrices of the form \eqref{eq:BandedMatDef}. We define the spectral map, characterize its range, and establish its injectivity. We also develop the inverse spectral theory and present an approach to recovering banded matrices from their spectral measure. Finally, Section~\ref{sec:toda} provides a detailed discussion of the Toda flow on banded matrices and analyzes the evolution of the spectral measure.

\subsection{Related work}\label{sec:lit}

While the spectral analysis of Jacobi matrices \cite{deift_orthogonal_2000,gesztesy_m-functions_1997,berezanskii_expansions_1968,beckert_f_1966,antony_inverse_1994,gesztesy_isospectral_1996,hochstadt_construction_1974,hochstadt_construction_1979,deift_determination_1984,masson_spectral_1991,ferguson_construction_1980,de_boor_numerically_1978,teschl_trace_1998,akhiezer_classical_2020} has been studied in great detail, techniques for analyzing the spectral theory of general banded matrices\cite{zagorodnyuk_direct_2008,Kud98,Kud99,berezanskii_expansions_1968,beckert_f_1966,marchenko_inverse_2018,kudryavtsev_inverse_2017,kudryavtsev_inverse_2017-1,branquinho_spectral_2023,damanik_analytic_2014,biegler-konig_construction_1981,beckermann_spectral_2003,mattis_construction_1981} are less developed, with most progress appearing only in recent years. In \cite{duran_generalization_1993,dette_matrix_2002,beckert_f_1966,berezanskii_expansions_1968}, matrix orthogonal polynomials are used to establish a connection between matrix-valued measures and finite Hermitian block tridiagonal matrices with equal sized blocks, extending the classical spectral theory for Jacobi matrices. This approach was further developed in~\cite{damanik_analytic_2014}, where infinite matrices of this structure are analyzed thoroughly. To the best of our knowledge, this idea has not been applied to finite banded Hermitian matrices of the form \eqref{eq:BandedMatDef}, where the last block may have a degenerate size. This generalization presents additional challenges, arising from the associated matrix-valued measure inducing only a quasi-inner product and the corresponding orthogonal polynomials potentially having degenerate norms. Adressing these difficulties forms the focus of the present work.


Alternative techniques have been developed to analyze the direct and inverse spectral theory of broader classes of banded matrices. In~\cite{kudryavtsev_inverse_2017,kudryavtsev_inverse_2017-1}, the authors study finite and infinite real symmetric banded matrices whose off-diagonal entries vanish beyond a certain index.
The approach in~\cite{kudryavtsev_inverse_2017,kudryavtsev_inverse_2017-1} is based on the linear interpolation theory for vector polynomials introduced in~\cite{kudryavtsev_linear_2015}. This interpolation theory enables the extension of the results in~\cite{Kud98,Kud99} from pentadiagonal matrices to matrices of arbitrary bandwidth and leads to a unique reconstruction procedure for the inverse spectral problem, an approach that differs from the orthogonal polynomial methodology used in the present work. 

A different approach is presented in~\cite{marchenko_inverse_2018}, where the spectral theory of finite Hermitian block matrices is developed. This book introduces the notion of \emph{restricted spectral data on a completely extendable set}, which refers to a subset of the spectral data that is sufficient to uniquely reconstruct the corresponding Hermitian matrix. Theorem 8 in \cite[Section 15]{marchenko_inverse_2018} gives the necessary and sufficient conditions for such a set to exist and provides an reconstruction procedure. It is worth noting, however, that the results in~\cite{marchenko_inverse_2018} assume all blocks are of the same size.

The analysis in~\cite{branquinho_spectral_2023} addresses bounded banded operators that admit a positive bidiagonal factorization after an appropriate shift. This approach generalizes the spectral theorem beyond the setting of self-adjoint or normal operators and builds on ideas from the theory of oscillatory matrices. The methods in this work are closely related to multiple orthogonal polynomials.

In this paper, we extend the matrix orthogonal polynomial approach to finite Hermitian banded matrices whose structure is given by \eqref{eq:BandedMatDef}. This class of matrices is covered by the approach in~\cite{marchenko_inverse_2018} only when all the diagonal and off-diagonal blocks have the same size, so our method provides a more general framework for cases where $\ell>0$ in \eqref{eq:BlocksDim}. Additionally, our work gives a simplified and basic solution to the inverse spectral problem, drawing inspiration from the spectral analysis of Jacobi matrices and avoiding the complexities of linear interpolation theory in~\cite{kudryavtsev_inverse_2017,kudryavtsev_inverse_2017-1} and the theory of multiple orthogonal polynomials in~\cite{branquinho_spectral_2023}.

\subsection{Equivalence of block tridiagonalization algorithms}

The block Lanczos algorithm \cite{rice_block_1977,underwood_iterative_1975} is an iterative procedure for constructing a block tridiagonal approximation of a Hermitian matrix. In its simplest form, it is given by Algorithm~\ref{a:BlockLanczos} in Appendix~\ref{appendix:Algs}. Given a Hermitian matrix $\vec A\in \mathbb C^{N\times N}$ and an initial block $\vec V\in \mathbb C^{N\times k}$, the block Lanczos iteration at step $n\leq \lceil N/k\rceil$ produces a block tridiagonal matrix $\vec J_n$ with the structure of \eqref{eq:BandedMatDef} and a sequence of matrices $\vec V_1, \dots, \vec V_n \in \mathbb{C}^{N \times k}$ such that\footnote{For integers $m,n$, the matrix $\vec I_{m\times n}$ denotes the $m\times n$ identity matrix, and we abbreviate $\vec I_n := \vec I_{n\times n}$.} $\vec V_i^*\vec V_j = \delta_{ij} \vec I_k$ and
\begin{align*}
    \vec J_n = \vec Q_n^*\vec A \vec Q_n, \qquad \vec Q_n = \begin{bmatrix}
        \vec V_1 & \cdots & \vec V_n
    \end{bmatrix}.
\end{align*}
The columns of $\vec Q_n$ form an orthonormal basis for the degree $n$ block Krylov subspace defined as
\begin{align*}
    \mathcal{K}_{n+1}(\vec A,\vec V) := \mathrm{span}\left\{\vec V, \vec A\vec V, \dots, \vec A^n \vec V \right\}.
\end{align*}
Here, the span is interpreted as the span of all columns of the matrices $\vec V,\vec A\vec V,\dots,\vec A^n \vec V$. It is important to note that if 
\begin{align*}
    \vec K(\vec A) := \begin{bmatrix}
        \vec I_{N\times k},& \vec A \vec I_{N\times k},& \dots,& \vec A^{\lceil N/k\rceil}\vec I_{N\times k}
    \end{bmatrix}
\end{align*}
has full rank, then block Lanczos on $\vec A$ with starting block $\vec I_{N\times k}$ runs for $\lceil N/k \rceil$ iterations and is said to \textit{run to completion}. On the other hand, if $n_0$ is the first index for which $\mathcal{K}_{n_0}(\vec A,\vec V)=\mathcal{K}_{n_0+1}(\vec A,\vec V)$, then the algorithm \textit{terminates early} at step $n_0$, as further iterations no longer generate linearly independent vectors.

Block tridiagonalization can also be achieved by modifying the classical Householder tridiagonalization algorithm using a different elimination pattern. For a vector $\vec v=[v_1^*,\dots,v_n^*]^*\in\mathbb C^n$, the associated Householder reflector is defined by
\begin{align*}
    \vec H(\vec v)
    = \vec I_n - 2 \vec w \vec w^*,
    \qquad 
    \vec w = \frac{\vec u}{\|\vec u\|},
    \qquad 
    \vec u = \frac{|v_1|}{v_1}\|\vec v\|\vec e_1 + \vec v,
\end{align*}
with the convention that $|0|/0=1$. The Householder procedure uses these reflectors to construct a sequence of unitary matrices $\{\vec Q_j\}_{j=1}^N$ that sequentially bring the leading $n\times n$ principal subblocks of 
\begin{align}\label{eq:HouseholderTransf}
    \vec Q_{n}\cdots \vec Q_1 \vec A \vec Q_1^* \cdots \vec Q_{n}^*
\end{align}
to the desired block tridiagonal form. At step $n$, the matrix $\vec Q_n$ is constructed as
\begin{align*}
    \vec Q_n
    =
    \begin{bmatrix}
        \vec I_{k+n-1} & \vec 0 \\
        \vec 0 & \vec H_n
    \end{bmatrix}, \qquad \vec H_n= \vec H(\vec v_n),
\end{align*}
where $\vec v_n$ is the vector formed by the last $N-k-n+1$ entries of the $n$-th column of \eqref{eq:HouseholderTransf}. When $\vec Q_n^*$ is applied on the left of \eqref{eq:HouseholderTransf}, it introduces zeros into rows $k+n+1,\dots,N$ of the $n$-th column. Multiplication on the right by $\vec Q_n$ preserves these zeros, and by symmetry the corresponding entries in the $n$-th row are also zero, yielding the desired block tridiagonal form. 

A brief review of the Householder reduction to block tridiagonal form is provided in Appendix~\ref{appendix:Algs}. The algorithm applies Householder reflectors sequentially, one transformation at a time. For improved computational efficiency, especially on modern architectures, several consecutive reflectors can be accumulated and applied simultaneously using block representations \cite{golub_matrix_2013,bischof_orthogonal_1996,dongarra_block_1989}. Such implementations rely on compact representations of products of
Householder reflectors, such as the $\vec W\vec Y^*$ and compact $\vec W\vec Y^*$ representations\cite{bischof_wy_1987,schreiber_storage-efficient_1989}. A related but conceptually distinct approach is based on \emph{block Householder reflectors}, defined as matrices of the form
\begin{align*}
    \vec H(\vec W) = \vec I_n - 2 \vec W \vec W^* ,
\end{align*}
where $\vec W\in \mathbb C^{N\times k}$ satisfies $\vec W^* \vec W = \vec I_k$. These reflectors generalize classical Householder transformations, but they do not, in general, preserve the triangular structure of the off-diagonal blocks described in \eqref{eq:BandedMatDef}. For further details, see \cite{schreiber_block_1988}.

The following result establishes an equivalence between block Lanczos and the Householder reduction. Although this equivalence is classical \cite[Theorem 2.1]{bloemendal_limits_2016}, we present a simple alternative argument based on the spectral theory of banded Hermitian matrices.

\begin{theorem}
    If block Lanczos (Algorithm~\ref{a:BlockLanczos}) applied to a Hermitian matrix $\vec A\in \mathbb C^{N\times N}$ with starting block $\vec I_{N\times k}$ runs to completion, then it produces the same block tridiagonal matrix as the Householder procedure (Algorithm~\ref{a:BlockHouseholder}).
\end{theorem}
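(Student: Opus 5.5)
Both procedures produce a unitary conjugation $\vec A = \vec Q \vec J \vec Q^*$ with $\vec J$ of the form \eqref{eq:BandedMatDef}. The first $k$ columns of $\vec Q$ are $\vec I_{N\times k}$ in both cases: block Lanczos starts from $\vec V_1=\vec I_{N\times k}$, and the Householder reflectors act as the identity on the first $k$ coordinates. The plan is to show that both outputs lie in the class of Definition~\ref{def:JkN} and have the same spectral measure. The injectivity of the spectral map (Theorem~\ref{thm:Injective}) then forces them to coincide.

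\textbf{Step 1 (both outputs are in the class).} We must check that both $\vec J_{\mathrm{Lan}}$ and $\vec J_{\mathrm{HH}}$ satisfy the structural requirements: block sizes as in \eqref{eq:BlocksDim}, full-rank off-diagonal blocks, and row echelon form with positive pivots.
\begin{itemize}
\item For block Lanczos, $\vec B_j$ comes from the QR factorization of the residual block. Running to completion means $\vec K(\vec A)$ has full rank, so the Krylov spaces grow by exactly $k$ each step until the last, which grows by $k-\ell$. This gives the full rank and the stated sizes, and the normalization of the QR step gives the echelon form with positive pivots.
\item For Householder, the $n$-th step zeroes rows $k+n+1,\dots,N$ of column $n$. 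The convention $\vec u = \frac{|v_1|}{v_1}\|\vec v\|\vec e_1+\vec v$ puts a nonzero entry of fixed phase on the new subdiagonal position. This produces the lower-triangular band pattern, i.e.\ the $\vec B_j$ are in echelon form.
\end{itemize}
Two points need care for Householder. The sign convention yields $-\frac{|v_1|}{v_1}\|\vec v\|$, so the pivots must be shown positive, possibly after a diagonal unitary rescaling. Nondegeneracy of the pivots should follow from the full rank of $\vec K(\vec A)$, because the leading columns of $\vec Q_{\mathrm{HH}}$ span the same Krylov spaces.

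\textbf{Step 2 (equal spectral measures).} Write $\vec A=\vec U\Lambda\vec U^*$. If $\vec A=\vec Q\vec J\vec Q^*$, then the eigenvectors of $\vec J$ are $\vec Q^*\vec U$. Their first $k$ components are the first $k$ rows of $\vec Q^*\vec U$, namely $\vec I_{N\times k}^*\vec U$, because $\vec Q\vec e_i=\vec e_i$ for $i\le k$. The spectral measure $\sum_j \delta_{\lambda_j}\,\vec U^*_{[1:k],j}\vec U_{[1:k],j}$ (in the paper's convention) therefore depends only on $\vec A$ and $k$. Hence it is the same for both $\vec J$'s. Eigenvalue multiplicities cause no trouble, since the measure is basis independent in each eigenspace.

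\textbf{Step 3.} Apply Theorem~\ref{thm:Injective}, or equivalently the reconstruction of Theorem~\ref{thm:InvSpec}, to conclude $\vec J_{\mathrm{Lan}}=\vec J_{\mathrm{HH}}$.

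The main obstacle is Step 1 for the Householder output: verifying the positive-pivot echelon normalization and full rank, so that it genuinely lies in the class where injectivity holds. I would first try induction on the step $n$, showing that the first $k+n$ columns of $\vec Q_1^*\cdots\vec Q_n^*$ span $\mathcal{K}$-type spaces. The pivot entry equals the norm of the projection of the next Krylov vector, which is nonzero by full rank. If the phase convention gives negative pivots, I would note that the paper's Algorithm~\ref{a:BlockHouseholder} presumably fixes the sign, and otherwise absorb the phases by a diagonal unitary fixing the first $k$ coordinates.
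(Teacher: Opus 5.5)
Your Steps~2 and~3 are exactly the paper's proof. Both conjugating unitaries fix $\vec e_1,\dots,\vec e_k$, so the two outputs have the same spectral measure, and Theorem~\ref{thm:Injective} is then applied. The gap is the one you flagged in Step~1, and it cannot be closed. The Householder output is in general \emph{not} in $\mathcal J_{k,N}$, which is the only set on which injectivity is available. The paper's own proof makes this same unchecked assumption.

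The reason is a constraint on reflectors. If $\vec H=\vec I-2\vec w\vec w^*$ satisfies $\vec H\vec v=\alpha\vec e_1$, then $|\alpha|=\|\vec v\|$ and $\alpha\bar v_1=\vec v^*\vec H\vec v\in\mathbb R$. Hence $\alpha=\pm\frac{v_1}{|v_1|}\|\vec v\|$ whenever $v_1\neq 0$. This has three consequences:
\begin{itemize}
\item no reflector gives a positive pivot when $v_1\notin\mathbb R$;
\item the convention of Algorithm~\ref{a:BlockHouseholder} gives the pivot $-\|\vec v\|$ whenever $v_1>0$;
\item the last pivot, at $(N,N-k)$, is never normalized, because the loop stops at $j=N-k-1$.
\end{itemize}
So the algorithm does not ``fix the sign'', and the statement as written is false. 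Take $k=1$, $N=2$, $\vec A=\begin{bmatrix}0&-1\\-1&0\end{bmatrix}$. Block Lanczos runs to completion and returns off-diagonal entries $+1$, while Algorithm~\ref{a:BlockHouseholder} performs no steps and returns $\vec A$.

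What your argument can prove requires two repairs to Step~1.

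\textbf{Nonzero pivots.} These do not follow from full rank of $\vec K(\vec A)$. For $k=3$, $N=5$, take $A_{41}=A_{51}=0$, $(A_{42},A_{52})=(1,1)$, $(A_{43},A_{53})=(0,1)$. Then $\vec K(\vec A)$ has full rank, and Lanczos returns an echelon $\vec B_0$ with pivots in columns $2$ and $3$. But the first Householder vector $\vec v$ vanishes, so Algorithm~\ref{a:BlockHouseholder} is undefined, and with the usual choice $\vec H=\vec I$ it returns a $\vec B_0$ that is not in echelon form. A sufficient hypothesis is that the first $N$ vectors of the ordered list $\vec e_1,\dots,\vec e_k,\vec A\vec e_1,\dots,\vec A\vec e_k,\vec A^2\vec e_1,\dots$ are linearly independent. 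Your induction then shows that every pivot $(k+j,j)$, $1\le j\le N-k$, is nonzero. The same hypothesis also justifies your Lanczos bullet, where growth of the Krylov spaces by exactly $k$ likewise does not follow from the rank of $\vec K(\vec A)$ alone.

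\textbf{Phase normalization.} Let $\sigma_j$ be the phase of the pivot at $(k+j,j)$. Set $d_1=\dots=d_k=1$, $d_{k+j}=\sigma_j d_j$, and $\vec D=\mathrm{diag}(d_1,\dots,d_N)$. Then $\vec D^*\vec J_{\mathrm{HH}}\vec D\in\mathcal J_{k,N}$, and it has the same spectral measure because $\vec D\vec e_i=\vec e_i$ for $i\le k$. Your Steps~2--3 then give $\vec J_{\mathrm{Lan}}=\vec D^*\vec J_{\mathrm{HH}}\vec D$. This is equality up to a diagonal unitary similarity, and it is literal equality exactly when the Householder pivots are already positive.
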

\newcommand{\JkN}{\mathcal J_{k,N}}
\begin{proof}
    Let $\vec J_1$ and $\vec J_2$ denote the block tridiagonal matrices produced by the block Lanczos algorithm and the Householder procedure, respectively, and write
    \begin{align}\label{eq:SimilarityEqsLanczosHouseholder}
        \vec J_1 = \vec Q_1^* \vec A \vec Q_1, \quad \vec J_2 = \vec Q_2^* \vec A \vec Q_2,
    \end{align}
    where $\vec Q_1,\vec Q_2$ are unitary matrices whose first $k$ columns are $\vec I_{N\times k}$. Consider the spectral map $\varphi$ defined in \eqref{eq:SpectMapDef1} below, and let $\bmu_1 = \varphi(\vec J_1)$ and $\bmu_2 = \varphi(\vec J_2)$. Since $\vec J_1$ and $\vec J_1$ have the same eigenvalues and identical first $k$ eigenvector entries, it follows that $\bmu_1 = \bmu_2$. Injectivity of $\varphi$ in Theorem~\ref{thm:Injective} therefore gives $\vec J_1=\vec J_2$.
\end{proof}

\subsection{Toda flow}\label{sec:todabackground}

The finite Toda lattice, introduced by Morikazu Toda in 1967 \cite{toda_vibration_1967,toda_theory_1989}, is a completly integrable model for a nonlinear one-dimensional crystal. The integrability of the Toda lattice was established independently by Flaschka\cite{flaschka_toda_1974} and Manakov\cite{manakov_complete_1975}, who proved that the system can be written as
\begin{align}\label{eq:toda}
    \partial_t \vec X = [\vec X,\vec B(\vec X)] = \vec X\vec B(\vec X)-\vec B(\vec X)\vec X, \quad \vec X(0)=\vec X_0,
\end{align}
where $\vec X_0 \in \mathbb C^{N\times N}$ is a Jacobi matrix. The matrix $\vec B(\vec X)$ is defined by
\begin{align}\label{eq:Bdef}
    \vec B(\vec X) = \vec X_- - \vec X_-^T,
\end{align}
with $\vec X_-$ denoting the strictly lower triangular part of $\vec X$.

Under the Toda flow, the eigenvalues of $\vec X$ remain constant in time, and the first entries of the eigenvectors evolve in a simple way. Let $\lambda_j(t)$ be the eigenvalues of $\vec X(t)$ and $\vec v_{1,j}(t)$ be the first entry of the $j$-th normalized eigenvector, and assume without loss of generality that \footnote{For a Jacobi matrix, all eigenvalues are simple, and the first component of each eigenvector is nonzero.} $\vec v_{1,j}(t)>0$. For each $j = 1, \dots, N$, we have
\begin{align}\label{eq:JacEvalEvecEvol}
\lambda_j(t) = \lambda_j, \quad \text{and} \quad
\vec v_{1,j}(t) = \frac{\vec v_{1,j}(0) e^{\lambda_j t}}{\left( \sum_{i=1}^N \vec v_{1,i}^2(0), e^{2\lambda_i t} \right)^{1/2}},
\end{align}
where $\lambda_1\geq \lambda_2\geq \dots\geq \lambda_N$ are the eigenvalues of $\vec X_0$. This leads to a simple expression for the spectral measure of $\vec X(t)$, given by
\begin{align}\label{eq:JacSpectralMeasure}
    \mu_{\vec X(t)} = \sum_{j=1}^N w_j(t)\delta_{\lambda_j}, \qquad w_j(t) = |\vec v_{1,j}(t)|^2 = \frac{e^{2\lambda_j t} w_j(0)}{\sum_{j=1}^N e^{2\lambda_i t} w_i(0)} \quad  \text{for }j=1,\dots,N.
\end{align}

The evolution of $\mu_{\vec X(t)}$ provides a direct procedure for solving the finite Toda lattice using inverse spectral methods. Starting from an initial Jacobi matrix $\vec X_0$, one computes its eigenvalues and the first components of its eigenvectors, evolves the weights using an explicit exponential factor, and reconstructs the Jacobi matrix from the spectral measure. This solution process is remarkable because, despite the Toda lattice being nonlinear, its integrable structure provides spectral variables in which the evolution is explicit and simple.


There are limited results giving analogous formulae for the Toda lattice with banded matrix initial data. Results for pentadiagonal initial data appear in \cite{Nanda1982}, where the evolution of the first two eigenvector components is derived. However, to the best of our knowledge, an explicit characterization of Toda lattice in terms of the evolution of a matrix-valued measure has not been obtained for general bandwidth. Using the spectral theory developed in Sections~\ref{sec:spectralmap}, we show that the Toda flow preserves the banded structure of matrices of the form \eqref{eq:BandedMatDef} and induces a simple evolution on the associated matrix-valued spectral measure. Specifically, the spectral measure evolves as
\begin{align*}
    \bmu_{\vec X(t)} = \sum_{j=1}^N \vec L^{-1}(t) \big( e^{2\lambda_j t} \vec v_j(0) \vec v_j(0)^* \big) \vec L^{-*}(t) \, \delta_{\lambda_j},
\end{align*}
where $\{\lambda_j\}_{j=1}^N$ are the eigenvalues of $\vec X_0$,  $\vec v_j(0)$ denotes the first $k$ components of the corresponding eigenvector, and $k$ is the bandwidth or block size of $\vec X_0$. The lower triangular matrix $\vec L(t)$ has positive diagonal entries and satisfies
\begin{align*}
    \sum_{j=1}^N e^{2\lambda_j t} \vec v_j(0)\vec v_j(0)^* = \vec L(t) \vec L(t)^*,
\end{align*}
and thus serves to give the Cholesky factorization of the sum. This shows that the Toda flow is also fully determined by the spectral data for banded matrices. This result is presented in Section~\ref{sec:toda}.

\section{Matrix-valued measures and polynomials}\label{sec:measures&polynomials}

A (positive) matrix-valued measure $\bmu$ on $\mathbb R$ is a countably-additive set function
\begin{align*}
    \bmu :\mathcal B(\mathbb R) \to \mathbb C^{k \times k}_+,
\end{align*}
where $\mathcal B(\mathbb R)$ denotes the Borel $\sigma$-algebra on $\mathbb R$ and $\mathbb C^{k \times k}_+$ is the set of Hermitian positive semi-definite $k\times k$ matrices.  The measure $\bmu$ is said to be normalizable if $\|\bmu(\mathbb R)\| < \infty$ and $\det \bmu(\mathbb R) \neq 0$.  In this case, we assume\footnote{The normalized measure $\bnu$ is given by $\bnu(x) = \bmu(\mathbb{R})^{-1/2} \bmu(x) \bmu(\mathbb{R})^{-1/2}$. Equivalently, if $\bmu(\mathbb{R}) = \vec L \vec L^*$ is the Cholesky decomposition of the total mass, then $\bnu$ can be expressed as $\bnu(x) = \vec L^{-1} \bmu(x) \vec L^{-*}$.}
\begin{align*}
    \bmu(\mathbb R) = \vec I_k,
\end{align*}
and we write $\bmu \in \mathcal P_k(\mathbb R)$. A point $x$ is said to be in the support of $\bmu$ if $\bmu(B) \neq \vec 0$ for every open set $B$ that contains $x$. The set of all such points is denoted by $ \supp \bmu$. 

A matrix-valued measure $\bmu$ induces both left and right quasi-inner products on matrix-valued Borel-measurable functions $\vec F,\vec G\colon\mathbb R\to\mathbb C^{k\times k}$, defined, respectively, by
\begin{align}
    \langle \vec F,\vec G\rangle_L := \int_{\mathbb R}\vec F(x)\bmu(\mathrm d x)\vec G^*(x), \quad \langle \vec F,\vec G\rangle_R := \int_{\mathbb R}\vec F^*(x)\bmu(\mathrm d x)\vec G(x).
\end{align}
These products are related via $\langle \vec F, \vec G \rangle_L = \langle \vec F^*, \vec G^* \rangle_R$. Throughout the paper, we work mainly with the right quasi-inner product and, with slight abuse of notation, write $\langle \vec F, \vec G \rangle_\bmu$ in place of $\langle \vec F, \vec G \rangle_R$. The right product satisfies the following properties:
\begin{enumerate}
    \item $\langle \vec F, \vec G \rangle_\bmu = \langle \vec G, \vec F \rangle_\bmu^*$,
    \item For $\vec C\in \mathbb C^{k\times k}$, $\langle \vec F,\vec G \vec C\rangle_\bmu = \langle \vec F,\vec G \rangle_\bmu \vec C$ and $\langle\vec F \vec C,\vec G\rangle_\bmu = \vec C^* \langle \vec F,\vec G \rangle_\bmu$,
    \item $\langle \vec F, \vec F \rangle_\bmu \in  \mathbb C^{k\times k}_+$,
    \item $\| \vec F \| :=  (\tr \langle \vec F, \vec F \rangle_\bmu)^{1/2}$ defines a semi-norm.
\end{enumerate}
We note that $\|\vec F\|$ cannot be expected to be a norm without additional assumptions. For example, consider
\begin{align*}
    \bmu = \sum_{i = 1}^n \vec w_i \vec w_i^* \delta_{x_i},
\end{align*}
for $x_i < x_{i+1}$ and $\sum_i \vec w_i \vec w_i^* = \vec I_k$. Then for any function $\vec F$ satisfying $\vec F(x_i) \vec w_i = \vec 0$, we have $\langle \vec F, \vec F \rangle_\bmu  = \vec 0$. This motivates the notion of $n$-definiteness, which guarantees that a sequence of $n$ orthonormal polynomials is well-defined (see Section \ref{subsec:OrthonormalPolys}).
\begin{definition}
    A measure $\bmu \in \mathcal P_k(\mathbb R)$ is called \emph{$n$-definite} if $\|\vec P\| \neq 0$ for every matrix-valued polynomial $\vec P$ of degree at most $n-2$ satisfying $\vec P(x) \neq \vec 0$ for some $x \in \mathbb R$.
\end{definition}


\subsection{Monic Orthogonal Polynomials}

Consider a sequence of monic matrix-valued polynomials $\vec \Pi_0(x),\vec \Pi_1(x),\vec \Pi_2(x),\dots$ defined by
\begin{align}\label{eq:MonicDef}
    \vec \Pi_j(x) = \vec \Pi_j(x,\bmu) = \vec I_k~x^j + \btau_j(\bmu) x^{j-1} + \text{ lower order terms}, \quad \btau_j:=\btau_j(\bmu)\in \mathbb C^{k\times k}.
\end{align}
These polynomials are said to be orthogonal with respect to the quasi-inner product $\langle \cdot,\cdot \rangle_\bmu$ if\footnote{We use $\diamond$ as a placeholder for the variable of a function. For instance, $\frac{1}{\diamond}$ denotes the function $f(x) = \frac{1}{x}$.}
\begin{align}\label{eq:MonicOrthogCond}
    \langle \vec \Pi_i(\diamond,\bmu),\vec \Pi_j(\diamond,\bmu)\rangle_\bmu = \bgamma_j \delta_{ij}, \quad \text{where $\bgamma_j:= \bgamma_j(\bmu)=\langle \vec \Pi_j(\diamond,\bmu),\vec \Pi_j(\diamond,\bmu) \rangle_\bmu \in \mathbb C_+^{k\times k}$}.
\end{align}
We now show that, when $\bmu$ is $n$-definite, monic orthogonal polynomials of degree at most $n-1$ are uniquely determined, and therefore $\{\bgamma_j(\bmu)\}_{j=0}^{n-1}$ and $\{\tau_j(\bmu)\}_{j=0}^{n-1}$ are well-defined. To establish this, we first note that any polynomial can be expressed as a linear combination of monic polynomials.
\begin{lemma}\label{lem:low_deg_expansion}
    Let $\{\vec \Pi_j\}_{j\geq0}$ be a sequence of monic matrix-valued polynomials. For every polynomial $\vec P\in \mathbb C^{k\times k}$ of degree $p$, there exists unique matrices $\{\Phi_j\}_{j=0}^{p}$ with $\Phi_j \in \mathbb{C}^{k\times k}$ such that $\vec P(x) =\sum_{j=0}^{p}\vec \Pi_j(x) \Phi_j$.
\end{lemma}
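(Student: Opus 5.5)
The plan is to prove existence and uniqueness together by induction on the degree $p$. Throughout, the key fact is that each $\vec \Pi_j$ is monic, so its leading coefficient is $\vec I_k$. The polynomials are multiplied by the coefficients $\Phi_j$ on the right, which matches the right quasi-inner product convention used in the paper.

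\emph{Existence.} For $p=0$, the polynomial is a constant matrix $\vec P(x)=\vec C$. Since $\vec \Pi_0=\vec I_k$, we may take $\Phi_0=\vec C$.

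For the inductive step, write $\vec P(x)=\vec C_p x^p+\text{lower order terms}$. Because $\vec \Pi_p(x)=\vec I_k x^p+\cdots$, the difference
\[
\vec R(x):=\vec P(x)-\vec \Pi_p(x)\vec C_p
\]
has degree at most $p-1$. The induction hypothesis applies to $\vec R$; if $\vec R=\vec 0$, we simply take all remaining coefficients to be zero. This yields $\Phi_0,\dots,\Phi_{p-1}$ with $\vec R=\sum_{j<p}\vec \Pi_j\Phi_j$. Setting $\Phi_p=\vec C_p$ completes the expansion.

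\emph{Uniqueness.} By taking differences, it suffices to show that $\sum_{j=0}^p\vec \Pi_j(x)\Phi_j=\vec 0$ forces every $\Phi_j=\vec 0$. Suppose instead that some $\Phi_j$ is nonzero, and let $m$ be the largest index with $\Phi_m\neq\vec 0$. Only $\vec \Pi_m$ contributes to the $x^m$ coefficient among the surviving terms, and that coefficient is $\vec I_k\Phi_m=\Phi_m$. Since the sum vanishes identically, $\Phi_m=\vec 0$, which is a contradiction.

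There is no real obstacle here. The only point needing care is the degree bookkeeping: one must check that the leading coefficient of $\vec \Pi_j\Phi$ is exactly $\Phi$, which holds because the leading coefficient of $\vec \Pi_j$ is the identity. This is what makes the triangular elimination possible without any invertibility assumption on $\vec C_p$.
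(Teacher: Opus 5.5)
Your proof is correct. The paper states this lemma without proof and treats it as standard, so there is no proof of its own to compare against; your argument (peel off the top term $\vec \Pi_p\vec C_p$ for existence, then compare the highest surviving coefficient for uniqueness) is the standard one the paper implicitly relies on. The only bookkeeping point is that the existence step should be run as strong induction on polynomials of degree \emph{at most} $p$, padding the unused $\Phi_j$ with $\vec 0$; you do this in effect when $\vec R$ has degree below $p-1$ or vanishes.
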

\begin{theorem}\label{Thm:DefiniteProp}
    If the $\bmu$ is $n$-definite, then the monic orthogonal polynomials $\{\vec \Pi_j(\diamond,\bmu)\}_{j=0}^{n-1}$ are uniquely defined and $\det \bgamma_j(\bmu) \neq 0$ for $j \leq n-2$.
\end{theorem}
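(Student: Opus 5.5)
We build the polynomials by induction on $j$, using block Gram--Schmidt on the monomials $\vec I_k x^j$. The key preliminary fact is that $n$-definiteness makes the Gram matrix of any nonzero polynomial of degree at most $n-2$ nonsingular, not merely nonzero. Indeed, let $\vec P$ have degree at most $n-2$ and suppose $\langle \vec P,\vec P\rangle_\bmu \vec c=\vec 0$ for some $\vec c\in\mathbb C^k$. Put $\vec Q(x)=\vec P(x)\vec c\,\vec e_1^*$, a matrix polynomial of the same degree bound. Property (2) of the quasi-inner product gives $\langle \vec Q,\vec Q\rangle_\bmu = \vec e_1\vec c^*\langle\vec P,\vec P\rangle_\bmu\vec c\,\vec e_1^*=\vec 0$, so $\|\vec Q\|=0$. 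By $n$-definiteness, $\vec Q$ vanishes identically, hence $\vec P(x)\vec c\equiv \vec 0$. If $\vec P$ is monic, comparing leading coefficients forces $\vec c=\vec 0$. Therefore $\det\langle \vec P,\vec P\rangle_\bmu\neq 0$ for every monic $\vec P$ of degree at most $n-2$.

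\textbf{Existence.} Take $\vec \Pi_0=\vec I_k$, which gives $\bgamma_0=\bmu(\mathbb R)=\vec I_k$. Suppose $\vec\Pi_0,\dots,\vec\Pi_{j-1}$ have been constructed, with $j\le n-1$, and with $\det\bgamma_i\neq0$ for $i\le j-1$; here $j-1\le n-2$, so the preliminary fact applies. Set
\begin{align*}
\vec \Pi_j(x)=\vec I_k x^j-\sum_{i=0}^{j-1}\vec\Pi_i(x)\,\bgamma_i^{-1}\langle \vec \Pi_i,\vec I_k\diamond^j\rangle_\bmu .
\end{align*}
This polynomial is monic of degree $j$. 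Property (2) and the mutual orthogonality of the earlier $\vec\Pi_i$ give $\langle\vec\Pi_i,\vec\Pi_j\rangle_\bmu=\vec 0$ for $i<j$. If $j\le n-2$, the preliminary fact gives $\det\bgamma_j\ne 0$, which keeps the induction going. At $j=n-1$ the construction still works, because it only needs $\bgamma_0^{-1},\dots,\bgamma_{n-2}^{-1}$. This step is where the asymmetry in the statement shows up: we obtain polynomials up to degree $n-1$, but invertibility of $\bgamma_j$ only up to $j=n-2$.

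\textbf{Uniqueness.} Suppose $\vec\Pi_j$ and $\tilde{\vec\Pi}_j$ are both monic of degree $j\le n-1$ and both orthogonal to all polynomials of degree less than $j$. By Lemma~\ref{lem:low_deg_expansion}, such polynomials are right combinations of $\vec\Pi_0,\dots,\vec\Pi_{j-1}$. The difference $\vec D=\vec\Pi_j-\tilde{\vec\Pi}_j$ has degree at most $j-1\le n-2$ and is orthogonal to every polynomial of degree at most $j-1$. In particular, $\langle \vec D,\vec D\rangle_\bmu=\vec 0$, so $n$-definiteness forces $\vec D\equiv\vec 0$. Note that uniqueness at degree $n-1$ also only uses definiteness in degree at most $n-2$.

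The step I expect to be the main obstacle is the passage from $\|\vec P\|\ne0$ to invertibility of $\langle\vec P,\vec P\rangle_\bmu$. This is handled by the rank-one truncation $\vec P\vec c\,\vec e_1^*$ together with the monic leading term. The rest is routine Gram--Schmidt bookkeeping with the right-module properties of $\langle\cdot,\cdot\rangle_\bmu$.
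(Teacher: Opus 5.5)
Your proof is correct and follows essentially the same route as the paper: Gram--Schmidt for existence, a rank-one truncation ($\vec P\vec c\,\vec e_1^*$ in place of the paper's $\vec \Pi_j\vec v\vec v^*$) to rule out a singular $\bgamma_j$ for $j\le n-2$, and the zero-norm difference argument for uniqueness. One wording slip: your opening sentence says the Gram matrix of \emph{any} nonzero polynomial of degree at most $n-2$ is nonsingular, which is false for $k\ge 2$ (the constant $\vec e_1\vec e_1^*$ has Gram matrix $\vec e_1\vec e_1^*$), but what you actually prove and use is right, namely $\langle\vec P,\vec P\rangle_{\bmu}\vec c=\vec 0 \Rightarrow \vec P\vec c\equiv\vec 0$, and hence nonsingularity for monic $\vec P$.
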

\begin{proof}
    Existence of the monic orthogonal polynomials $\vec \Pi_0,\dots, \vec \Pi_{n-1}$ follows recursively by applying the Gram-Schmidt process to the sequence ($x\mapsto\vec I_k$, $x\mapsto x\vec I_k$, $\dots$,  $x\mapsto x^{n-1}\vec I_k$), provided that each $\bgamma_j$ is nonsingular for $1\leq j<n-1$. 
    
    To show $\mathrm{det}~\bgamma_{j}\neq 0$ for $j<n-1$, suppose for contradiction that $j$ is the first index where $\bgamma_j$ is singular. In this case, there exists a nonzero vector $\vec v$ such that $\bgamma_j \vec v = \vec 0$. Define $\vec \Pi = \vec \Pi_j\vec v\vec v^*$, then $\vec \Pi\neq \vec 0$ since its leading coefficient is $\vec v\vec v^*\neq \vec 0$. However, we have $\langle \vec \Pi,\vec \Pi \rangle_\bmu = \vec v\vec v^*\bgamma_j \vec v\vec v^* = \vec 0$, which contradicts the fact that $\bmu$ is $n$-definite, and thus $\det \bgamma_j \neq 0$.

    To establish uniqueness, let $\tilde{\vec\Pi}_j$ be another monic orthogonal polynomial of degree $j$. Note that $\vec \Pi_j-\tilde{\vec \Pi}_j$ is a polynomial of degree of $j-1$, and hence
    \begin{align*}
        \langle \vec \Pi_j-\tilde{\vec \Pi}_j, \vec \Pi_j-\tilde{\vec \Pi}_j\rangle_\bmu = \langle \vec \Pi_j , \vec \Pi_j-\tilde{\vec \Pi}_j \rangle_\bmu - \langle \tilde{\vec \Pi}_j , \vec \Pi_j-\tilde{\vec \Pi}_j \rangle_\bmu = \vec 0,
    \end{align*}
    where the second equality follows from Lemma~\ref{lem:low_deg_expansion}.
    Finally, since $\bmu$ is $n$-definite, it follows that $\tilde{\vec \Pi}_j = \vec \Pi_j$.
\end{proof}

The monic orthogonal polynomials satisfy a number of well-known properties (see, for instance, \cite{damanik_analytic_2014,sinap_orthogonal_1996,krein_fundamental_1971,miranian_matrix-valued_2005,duran_orthogonal_1995,sinap_polynomial_1994}). We highlight below a fundamental result that will be important for our work.
\begin{lemma}
    If $\bmu$ is $n$-definite and $\vec P$ has degree $p\leq n-2$, then the coefficients $\{\Phi_j\}_{j=0}^p$ in Lemma~\ref{lem:low_deg_expansion} are given by $\Phi_j = \bgamma_j^{-1}(\bmu) \langle \vec \Pi_j,\vec P \rangle_\bmu$.
\end{lemma}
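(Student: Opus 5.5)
Start from the expansion of Lemma~\ref{lem:low_deg_expansion}: write $\vec P(x)=\sum_{j=0}^{p}\vec \Pi_j(x)\Phi_j$. Since $p\le n-2$, every index $j$ in this sum satisfies $j\le n-2$. By Theorem~\ref{Thm:DefiniteProp}, the polynomials $\vec\Pi_0,\dots,\vec\Pi_{n-1}$ are well-defined, mutually orthogonal, and $\bgamma_j$ is invertible for every $j\le n-2$. So the claimed formula at least makes sense for each relevant $j$.

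Next, fix $i\in\{0,\dots,p\}$ and take the quasi-inner product of $\vec\Pi_i$ with both sides. By property (2), $\langle \vec F,\vec G\vec C\rangle_\bmu=\langle \vec F,\vec G\rangle_\bmu\vec C$. Additivity in the second argument also holds, since the integral is linear. Together these give
\begin{align*}
\langle \vec \Pi_i,\vec P\rangle_\bmu=\sum_{j=0}^{p}\langle \vec\Pi_i,\vec\Pi_j\rangle_\bmu\Phi_j=\sum_{j=0}^p\bgamma_j\delta_{ij}\Phi_j=\bgamma_i\Phi_i,
\end{align*}
where the middle step is the orthogonality relation \eqref{eq:MonicOrthogCond}. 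Multiplying on the left by $\bgamma_i^{-1}$, which exists because $i\le p\le n-2$, yields $\Phi_i=\bgamma_i^{-1}\langle\vec\Pi_i,\vec P\rangle_\bmu$.

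The only delicate point is ensuring that every index used lies in the range where Theorem~\ref{Thm:DefiniteProp} applies. Orthogonality must hold among the $\vec\Pi_j$ with $j\le p$, and invertibility is needed only for $\bgamma_i$ with $i\le n-2$. Both are guaranteed by the hypothesis $p\le n-2$. Uniqueness of the $\Phi_j$ is already supplied by Lemma~\ref{lem:low_deg_expansion}, so no separate argument for it is needed.
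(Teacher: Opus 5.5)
Your proof is correct and follows the same route as the paper. You expand $\vec P$ in the monic basis and pair with $\vec\Pi_i$, using right-linearity and the orthogonality relations \eqref{eq:MonicOrthogCond}, to get $\langle\vec\Pi_i,\vec P\rangle_\bmu=\bgamma_i\Phi_i$; you then invert $\bgamma_i$ via Theorem~\ref{Thm:DefiniteProp}, which applies since $i\le p\le n-2$. The paper states this in one line, and your version writes out that computation.
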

\begin{proof}
    The statement follows from Lemma~\ref{lem:low_deg_expansion}, the orthogonality relations in \eqref{eq:MonicOrthogCond}, and Theorem~\ref{Thm:DefiniteProp}, which ensures that $\bgamma_j$ is invertible for $j\leq n-2$.
\end{proof}

\begin{theorem}\label{thm:MonicRec}
    Assume that $\bmu$ is $n$-definite. Then the associated monic orthogonal matrix polynomials obey the following three-term recurrence relation,
    \begin{align}\label{eq:MonicRec}
        x\vec \Pi_j(x) = \vec \Pi_{j+1}(x) + \vec \Pi_j(x) \vec C_j + \vec \Pi_{j-1}(x) \vec D_j , \quad \text{for } j=0,1,\dots,n-2
    \end{align}
    where $\vec \Pi_{-1}\equiv 0$, $\bgamma_{-1} = \vec I_k$,
    \begin{align*}
        \vec C_j(\bmu) = \bgamma_{j}^{-1}\langle \diamond\vec \Pi_j(\diamond),\vec \Pi_j \rangle_\bmu = \btau_j(\bmu)-\btau_{j+1}(\bmu), \quad \vec D_j(\bmu) = \bgamma_{j-1}^{-1}(\bmu)\langle \diamond\vec \Pi_{j-1}(\diamond),\vec \Pi_j \rangle_\bmu = \bgamma_{j-1}^{-1}(\bmu)\bgamma_j(\bmu).
    \end{align*}
\end{theorem}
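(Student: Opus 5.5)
Fix $j\in\{0,\dots,n-2\}$. The polynomial $x\vec \Pi_j(x)-\vec \Pi_{j+1}(x)$ has degree at most $j$, because both terms are monic of degree $j+1$. By Lemma~\ref{lem:low_deg_expansion} we can write
\begin{align*}
x\vec \Pi_j(x)-\vec \Pi_{j+1}(x)=\sum_{i=0}^{j}\vec \Pi_i(x)\Phi_i
\end{align*}
for unique matrices $\Phi_i$. Since $j\le n-2$, the preceding lemma applies and gives $\Phi_i=\bgamma_i^{-1}\langle \vec \Pi_i, \diamond\vec \Pi_j-\vec \Pi_{j+1}\rangle_\bmu$; here Theorem~\ref{Thm:DefiniteProp} guarantees that $\bgamma_i$ is invertible for $i\le n-2$. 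For $i\le j$ orthogonality gives $\langle\vec \Pi_i,\vec \Pi_{j+1}\rangle_\bmu=\vec 0$. This uses only the defining orthogonality \eqref{eq:MonicOrthogCond} for indices up to $n-1$, which is legitimate because $\vec \Pi_{j+1}$ with $j+1\le n-1$ exists by Theorem~\ref{Thm:DefiniteProp}. Hence $\Phi_i=\bgamma_i^{-1}\langle \vec \Pi_i,\diamond\vec \Pi_j\rangle_\bmu$.

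Next I would move $x$ across the product. Since $\bmu$ is a positive measure on $\mathbb R$, $\langle \vec \Pi_i,\diamond\vec \Pi_j\rangle_\bmu=\langle \diamond\vec \Pi_i,\vec \Pi_j\rangle_\bmu$. For $i\le j-2$, $x\vec \Pi_i$ has degree at most $j-1$. Expanding it in $\vec \Pi_0,\dots,\vec \Pi_{j-1}$ via Lemma~\ref{lem:low_deg_expansion} and using sesquilinearity (property 2) together with orthogonality, this product vanishes. Only $i=j$ and $i=j-1$ survive, which gives \eqref{eq:MonicRec} with $\vec C_j=\bgamma_j^{-1}\langle\diamond\vec \Pi_j,\vec \Pi_j\rangle_\bmu$ and $\vec D_j=\bgamma_{j-1}^{-1}\langle\diamond\vec \Pi_{j-1},\vec \Pi_j\rangle_\bmu$. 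The conventions $\vec \Pi_{-1}\equiv 0$ and $\bgamma_{-1}=\vec I_k$ cover $j=0$.

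For the alternative expressions, I would compare the coefficients of $x^{j}$ in the recurrence. The left side contributes $\btau_j$, $\vec \Pi_{j+1}$ contributes $\btau_{j+1}$, $\vec \Pi_j\vec C_j$ contributes $\vec C_j$, and $\vec \Pi_{j-1}\vec D_j$ has degree $j-1$. Therefore $\vec C_j=\btau_j-\btau_{j+1}$, with $\btau_0=\vec 0$. For $\vec D_j$, write $x\vec \Pi_{j-1}=\vec \Pi_j+(\text{degree}\le j-1)$. Orthogonality then gives $\langle\diamond\vec \Pi_{j-1},\vec \Pi_j\rangle_\bmu=\langle\vec \Pi_j,\vec \Pi_j\rangle_\bmu=\bgamma_j$, so $\vec D_j=\bgamma_{j-1}^{-1}\bgamma_j$.

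The main thing to check carefully is the index bookkeeping at the top end, $j=n-2$. There $\vec \Pi_{j+1}=\vec \Pi_{n-1}$ is only known to exist and be orthogonal, and $\bgamma_{n-1}$ may be singular. The argument must therefore never invert $\bgamma_{n-1}$ and never expand polynomials of degree $n-1$ via the coefficient formula. The steps above only invert $\bgamma_i$ for $i\le j\le n-2$, so this should be fine, as is the identity $\vec D_{n-2}=\bgamma_{n-3}^{-1}\bgamma_{n-2}$.
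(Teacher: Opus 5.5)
Your proposal is correct and follows essentially the same route as the paper. Both arguments expand $x\vec \Pi_j-\vec \Pi_{j+1}$ in the monic basis, move $x$ across the quasi-inner product to kill the coefficients with index $\le j-2$, read off $\vec C_j$ from the $x^j$ coefficient, and read off $\vec D_j$ from $\langle \diamond\vec \Pi_{j-1}(\diamond),\vec \Pi_j\rangle_\bmu=\bgamma_j$. Your explicit treatment of the $i\le j-2$ terms and of the top index $j=n-2$ is, if anything, more careful than the paper's; the one nit is that your derivation of $\vec D_j=\bgamma_{j-1}^{-1}\bgamma_j$ (via $x\vec \Pi_{j-1}=\vec \Pi_j+\dots$) needs $j\ge 1$, which is harmless because $\vec \Pi_{-1}\equiv 0$.
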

\begin{proof}
    Since $x\vec \Pi_j(x)-\vec \Pi_{j+1}(x)$ is a polynomial of degree $j$ with leading coefficient $\btau_j-\btau_{j-1}$, Lemma~\ref{lem:low_deg_expansion} implies that
    \begin{align*}
        x\vec \Pi_j(x)-\vec \Pi_{j+1}(x) = \sum_{i=0}^j \vec \Pi_i(x) \Phi_i, \quad \text{with} \quad  \Phi_j = \btau_j-\btau_{j+1}.
    \end{align*}
    Taking the inner product of both sides with $\vec \Pi_\ell$ gives
    \begin{align*}
        \langle \vec \Pi_\ell, \diamond\vec \Pi_j(\diamond)-\vec \Pi_{j+1} \rangle_\bmu = \sum_{i=0}^j \langle \vec \Pi_\ell , \vec \Pi_i\rangle_\bmu \Phi_i, = \bgamma_\ell \Phi_\ell.
    \end{align*}
    However, the left-hand side can be rewritten as
    \begin{align*}
        \langle \vec \Pi_\ell, \diamond\vec \Pi_j(\diamond)-\vec \Pi_{j+1} \rangle_\bmu = \langle \diamond\vec \Pi_\ell(\diamond), \vec \Pi_j \rangle_\bmu - \langle \vec \Pi_\ell,\vec \Pi_{j+1} \rangle_\bmu.
    \end{align*}
    By the orthogonality condition \eqref{eq:MonicOrthogCond}, both terms vanish when $\ell\leq j-2$, and hence $\Phi_\ell = \vec 0$ in this case. Finally, we have
    \begin{align*}
        \Phi_{j-1} = \bgamma_{j-1}^{-1} \langle \diamond\vec \Pi_{j-1}(\diamond), \vec \Pi_j \rangle_\bmu = \bgamma_{j-1}^{-1}\left( \langle \vec \Pi_{j}, \vec \Pi_j \rangle_\bmu - \langle \diamond\vec \Pi_{j-1}(\diamond) - \vec \Pi_j, \vec \Pi_j \rangle_\bmu \right) = \bgamma_{j-1}^{-1}\bgamma_j,
    \end{align*}
    which completes our proof.
\end{proof}

\subsection{Orthonormal Polynomials}\label{subsec:OrthonormalPolys}

If $\bmu$ is $n$-definite, then a family of orthonormal matrix polynomials $\{\vec P_j(\diamond,\bmu)\}_{j=0}^{n-2}$ is given by
\begin{align}\label{eq:OpsNormalization}
\vec P_j(x) := \vec P_j(x,\bmu) =  \vec \Pi_j(x,\bmu) \vec \bgamma_j^{-1/2}(\bmu) \vec Q_j,
\end{align}
where $\vec \Pi_j(\diamond,\bmu)$ and $\bgamma_j(\bmu)$ are defined in \eqref{eq:MonicOrthogCond}, and $\vec Q_j$ is an arbitrary unitary matrix. It is important to note that, for every choice of $\{\vec Q_j\}_{j=0}^{n-2}$, we have $\langle\vec P_i,\vec P_j\rangle_\bmu = \vec I_k\delta_{ij}$, so orthonormal polynomials are determined only up to a right multiplication by a unitary matrix. Throughout our work, we fix $\vec Q_0 = \vec I_k$ so that $\vec P_0 = \vec I_k$.

Theorem~\ref{thm:MonicRec} implies that these polynomials follow a Hermitian recurrence relation
\begin{align}\label{eq:OrthonormRec}
    x \vec P_j(x) = \vec P_{j+1}(x) \vec B_{j} + \vec P_j(x)\vec A_j + \vec P_{j-1}(x)\vec B^*_{j-1}, \quad j=0,\dots,n-3, \quad \vec A_j = \vec A_j^*, \quad \det \vec B_j\neq 0,
\end{align}
with the convention $\vec P_{-1} \equiv \vec 0$ and $\vec B_{-1} \equiv \vec I_k$. The recurrence coefficients $\vec A_j = \vec A_j(\bmu)$ and $\vec B_j = \vec B_j(\bmu)$ are explicitly given by
\begin{align}\label{eq:RecCoefForm}
\vec A_j = \langle \vec P_j,\diamond \vec P_j(\diamond) \rangle_\bmu = \vec Q_j^*\bgamma_j^{1/2}\vec C_j\bgamma_j^{-1/2}\vec Q_j, \quad \text{and} \quad \vec B_j = \langle \vec P_{j+1}, \diamond \vec P_j(\diamond) \rangle_\bmu = \vec Q_{j+1}^*\bgamma_{j+1}^{-1/2}\vec D_{j+1}^* \bgamma_j^{1/2}\vec Q_j,
\end{align}
where $\vec C_j$ and $\vec D_j$ are defined in Theorem~\ref{thm:MonicRec}. Lemma~\ref{lem:OpsUniqNormalization} shows that there exists a unique choice of $\{\vec Q_j\}_{j=0}^{n-2}$ that produces a three-term recurrence in which $\vec B_j$, for $j=0,\dots,n-3$, is upper triangular with positive diagonal entries.

\begin{lemma}\label{lem:OpsUniqNormalization}
    Let $\bmu$ be $n$-definite, then there exists a unique family of matrix-valued orthogonal polynomials $\{\vec P_j(\diamond,\bmu)\}_{j=0}^{n-2}$, i.e. a unique choice of unitary matrices $\{\vec Q_j\}_{j=1}^{n-2}$ in \eqref{eq:OpsNormalization}, such that the matrices $\{\vec A_j(\bmu)\}_{j=0}^{n-3}$ are Hermitian and the matrices $\{\vec B_j(\bmu)\}_{j=0}^{n-3}$ are upper triangular with positive diagonal entries.
\end{lemma}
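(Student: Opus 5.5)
We choose the unitaries $\vec Q_1,\dots,\vec Q_{n-2}$ one at a time; at step $j$, $\vec Q_{j+1}$ is fixed by a QR factorization of an invertible matrix built from $\vec Q_j$. Hermiticity of $\vec A_j$ needs no work. By \eqref{eq:RecCoefForm}, $\vec A_j=\langle \vec P_j,\diamond\vec P_j(\diamond)\rangle_\bmu$, and since $x$ is real this is Hermitian by property (1) of the quasi-inner product, whatever $\vec Q_j$ is. So the whole content is the triangular normalization of $\vec B_j$.

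We start from $\vec Q_0=\vec I_k$ and argue by induction on $j=0,\dots,n-3$. Suppose $\vec Q_0,\dots,\vec Q_j$ are already fixed. By \eqref{eq:RecCoefForm}, $\vec B_j=\vec Q_{j+1}^*\vec M_j$ with $\vec M_j:=\bgamma_{j+1}^{-1/2}\vec D_{j+1}^*\bgamma_j^{1/2}\vec Q_j$, and $\vec M_j$ depends only on choices already made. Theorem~\ref{thm:MonicRec} gives $\vec D_{j+1}=\bgamma_j^{-1}\bgamma_{j+1}$. For $j+1\le n-2$, Theorem~\ref{Thm:DefiniteProp} says $\bgamma_j$ and $\bgamma_{j+1}$ are invertible, so $\vec D_{j+1}$, and hence $\vec M_j$, is invertible. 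Substituting, $\vec M_j=\bgamma_{j+1}^{1/2}\bgamma_j^{-1/2}\vec Q_j$.

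Every invertible matrix has a unique QR factorization $\vec M_j=\vec U\vec R$ with $\vec U$ unitary and $\vec R$ upper triangular with positive diagonal. Setting $\vec Q_{j+1}=\vec U$ therefore makes $\vec B_j=\vec R$ of the required form. Conversely, if some unitary $\vec Q'$ also makes $\vec Q'^*\vec M_j$ upper triangular with positive diagonal, then $\vec M_j=\vec Q'(\vec Q'^*\vec M_j)$ is a second such factorization, so $\vec Q'=\vec U$. This gives existence and uniqueness at step $j$; induction yields the whole family $\vec Q_1,\dots,\vec Q_{n-2}$, and hence a unique family $\{\vec P_j\}_{j=0}^{n-2}$.

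Uniqueness of the QR factorization is short. If $\vec U\vec R=\vec U'\vec R'$, then $\vec U'^*\vec U=\vec R'\vec R^{-1}$ is both unitary and upper triangular with positive diagonal. A unitary upper triangular matrix is diagonal, and a unitary diagonal matrix with positive entries is $\vec I_k$.

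The only step needing care is checking that every $\vec M_j$ with $j\le n-3$ is invertible, i.e.\ that $\bgamma_{j+1}$ is nonsingular for $j+1\le n-2$. Theorem~\ref{Thm:DefiniteProp} gives exactly this, and it is why the statement stops at $\vec B_{n-3}$ and the polynomial $\vec P_{n-2}$: for larger indices $\bgamma$ can be singular and the normalization breaks down.
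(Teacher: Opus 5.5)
Your proof is correct and follows essentially the same route as the paper's. Both induct on $j$, take $\vec Q_{j+1}$ from the QR factorization of the invertible matrix $\tilde{\vec B}_j\vec Q_j$, and get uniqueness from uniqueness of QR for invertible matrices. Your $\vec M_j=\bgamma_{j+1}^{1/2}\bgamma_j^{-1/2}\vec Q_j$ is exactly this matrix for the reference family $\tilde{\vec P}_j=\vec\Pi_j\bgamma_j^{-1/2}$. You are a bit more explicit than the paper in checking invertibility via Theorem~\ref{Thm:DefiniteProp} and in noting that Hermiticity of $\vec A_j$ is automatic.
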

\begin{proof}
    Let $\{\vec{\tilde P}_j\}_{j=0}^{n-2}$ be any family of matrix orthonormal polynomials associated with $\bmu$, with recurrence coefficients $\{\vec{\tilde A}_j\}_{j=0}^{n-3}$ and $\{\vec{\tilde B}_j\}_{j=0}^{n-3}$. We first construct a family $\{\vec P_j\}$ inductively using unitary transformations to enforce the required conditions on the recurrence coefficients. Consider 
    \begin{align*}
        x\vec{\tilde P}_0(x) = \vec{\tilde P}_1(x) \vec{\tilde B}_0 + \vec{\tilde P}_0(x) \vec{\tilde A}_0,
    \end{align*}
    and let $\vec{\tilde B}_0 = \vec Q_1 \vec B_0$ be a QR decomposition, where $\vec Q_1$ is unitary and $\vec B_0$ is upper triangular with positive diagonal entries. Setting
    \begin{align*}
        \vec P_0 = \vec{\tilde P}_0, \quad \vec P_1 = \vec{\tilde P}_1 \vec Q_1, \quad \text{and} \quad \vec A_0 = \vec{\tilde A}_0,
    \end{align*}
    we find
    \begin{align*}
        x\vec P_0(x) = \vec P_1(x) \vec B_0 + \vec P_0(x) \vec A_0.
    \end{align*}
    Assume now that $\{\vec P_j\}_{j=0}^{i}$ have been constructed so that the required conditions on $\vec A_j$ and $\vec B_j$ hold for all $j<i$. Let $\vec Q_1,\dots,\vec Q_i$ be the unitary matrices used in the previous steps, and by construction these matrices satisfy
    \begin{align*}
        \vec{\tilde B}_j\vec Q_j = \vec Q_{j+1} \vec B_j, \quad \text{and} \quad \vec A_j = \vec Q_j^* \vec{\tilde A_j\vec Q_j} \quad \text{for $j=1,\dots,i-1$}.
    \end{align*}
    Consider the recurrence for $\vec{\tilde P}_{i+1}$, that is,
    \begin{align*}
        x\vec{\tilde P}_i(x) = \vec{\tilde P}_{i+1}(x) \vec{\tilde B}_i + \vec{\tilde P}_i(x) \vec{\tilde A}_i + \vec{\tilde P}_{i-1}(x) \vec{\tilde B}_{i-1}.
    \end{align*}
    Multiplying on the right by $\vec Q_i$ gives
    \begin{align*}
        x\vec P_i(x) = \vec{\tilde P}_{i+1}(x) \vec{\tilde B}_i \vec Q_{i} + \vec P_i(x) \vec Q_i^*\vec{\tilde A}_i \vec Q_i + \vec{\tilde P}_{i-1}\vec Q_{i-1}^*(x) \vec{\tilde B}_{i-1} \vec Q_i.
    \end{align*}
    Using a QR decomposition $\vec{\tilde B}_i \vec Q_i = \vec Q_{i+1}\vec B_i$, we define $\vec P_{i+1}=\vec{\tilde P}_{i+1}\vec Q_{i+1}$ and $\vec A_i = \vec Q_i^* \vec A_i \vec Q_i$. This guarantees that the $(i+1)$-th recurrence has the desired form, establishing the existence of a normalization. Uniqueness follows from the uniqueness of the QR decomposition for invertible matrices.
\end{proof}

For the remainder of this paper, we fix the orthonormal polynomials $\{\vec P_j(\diamond,\bmu)\}_{j=0}^{n-2}$ to be the unique sequence given by Lemma~\ref{lem:OpsUniqNormalization}. The $(n-1)$-th orthonormal polynomial\footnote{In this paper, we define the $(n-1)$-th orthonormal polynomial as the unique $k\times (k-\ell)$ polynomial $\vec P_{n-1}$, where $\mathrm{rank}~\gamma_{n-1}=k-\ell$, satisfying $\langle \vec P_i,\vec P_j \rangle_\bmu = \vec 0_{k\times(k-\ell)}$ for $i\neq j$ and $\langle \vec P_i,\vec P_i \rangle_\bmu = \vec I_{k-\ell}$ for $i=0,\dots,n-1$, with $\langle \vec P_{n-1},\diamond\vec P_{n-2}(\diamond) \rangle_\bmu$ is in row echelon form with positive pivots.} $\vec P_{n-1}(\diamond,\bmu)$ is not directly defined, since the normalization factor $\bgamma_{n-1}(\bmu)=\langle\vec\Pi_{n-1}(\diamond,\bmu),\vec \Pi_{n-1}(\diamond,\bmu) \rangle$ in \eqref{eq:OpsNormalization} is potentially singular. In the following, we construct $\vec P_{n-1}(\diamond,\bmu)$ when $\mathrm{rank}~\bgamma_{n-1}(\bmu) = k-\ell$ for some $0\leq \ell<k$. 

\begin{lemma}\label{lem:REF}
    Let $\vec A \in \mathbb C^{n\times n}$ be a positive semidefinite matrix of rank $k$, then there exists a unique upper triangular matrix $\vec R\in \mathbb C^{k\times n}$ in row echelon form and positive pivots such that $\vec A = \vec R^* \vec R$. 
\end{lemma}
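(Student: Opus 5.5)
Existence and uniqueness will both come from a pivoted Cholesky argument: row echelon form together with positive pivots pins the factor down completely. For existence, I will apply Gaussian elimination to the Hermitian form $\vec x^*\vec A\vec x$, processing columns from left to right. Write $\vec a_1,\dots,\vec a_n$ for the columns of $\vec A^{1/2}$, so that $\vec A=\vec G$ is the Gram matrix $G_{ij}=\vec a_i^*\vec a_j$. Apply Gram--Schmidt to $\vec a_1,\dots,\vec a_n$. Each time $\vec a_j$ is not in the span of the preceding columns, we get a new orthonormal vector $\vec q_m$, for $m=1,\dots,k$. Here $k$ equals the rank, because $\operatorname{rank}\vec A^{1/2}=\operatorname{rank}\vec A$. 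Set $\vec R_{mj}=\vec q_m^*\vec a_j$. Then $\vec A=\vec R^*\vec R$, since $\vec a_j=\sum_m \vec q_m\vec R_{mj}$ and the $\vec q_m$ are orthonormal.

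Next I check the shape of $\vec R$. Let $j_m$ be the index at which $\vec q_m$ first appears. Then $\vec R_{mj}=0$ for $j<j_m$, because those $\vec a_j$ lie in the span of $\vec q_1,\dots,\vec q_{m-1}$. The pivot $\vec R_{m j_m}$ is the norm of the component of $\vec a_{j_m}$ orthogonal to the previous span, so it is positive. Since $j_1<j_2<\cdots<j_k$, $\vec R$ is in row echelon form with positive pivots. Equivalently, this is the rank-revealing QR factorization $\vec A^{1/2}=\vec Q\vec R$ with $\vec Q\in\mathbb C^{n\times k}$ having orthonormal columns.

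For uniqueness, suppose $\vec A=\vec R^*\vec R=\vec S^*\vec S$ with both factors in row echelon form with positive pivots. Since $\vec R^*\vec R=\vec S^*\vec S$, the map $\vec R\vec x\mapsto\vec S\vec x$ preserves norms. It is well defined because $\ker\vec R=\ker\vec A=\ker\vec S$. Hence there is a unitary $\vec U\in\mathbb C^{k\times k}$ with $\vec S=\vec U\vec R$, using that both $\vec R$ and $\vec S$ have rank $k$ and so are surjective onto $\mathbb C^k$.

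The step I expect to need the most care is showing $\vec U=\vec I$. The first step is that the pivot columns coincide. The pivot columns of $\vec R$ are determined by $\vec A$: column $j$ is a pivot exactly when $\operatorname{rank}$ of the leading $j\times j$ principal submatrix of $\vec A$ exceeds that of the $(j-1)\times(j-1)$ one. This rank equals the rank of the first $j$ columns of $\vec R$, and that rank increases precisely at the pivots. The same holds for $\vec S$, so both have pivots at $j_1<\dots<j_k$.

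Now restrict to the pivot columns. The submatrices $\vec R_p$ and $\vec S_p$ formed from those columns are invertible upper triangular $k\times k$ matrices with positive diagonal, and $\vec S_p=\vec U\vec R_p$. Thus $\vec U=\vec S_p\vec R_p^{-1}$ is upper triangular with positive diagonal. A unitary upper triangular matrix is diagonal, and with positive diagonal entries it must be the identity. Therefore $\vec S=\vec R$.
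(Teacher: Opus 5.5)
Your proof is correct and follows the same route as the paper. Existence comes from an echelon-form QR factorization of a square-root factor of $\vec A$, which you build explicitly by Gram--Schmidt on the columns of $\vec A^{1/2}$. Uniqueness comes from writing one factor as a unitary times the other, showing the pivot columns agree, and concluding that the unitary is upper triangular with positive diagonal, hence $\vec I$. Two of your sub-steps are cleaner justifications of the same points: identifying the pivots through the ranks of the leading principal submatrices of $\vec A$, and obtaining $\vec U$ as an isometry rather than the paper's $\vec R\tilde{\vec R}^\dagger$.
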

\begin{proof}
    Since $\vec A$ is positive semidefinite of rank $k$, there exists $\vec X \in \mathbb C^{k\times n}$ such that $\vec A = \vec X^* \vec X$. Because $\vec X$ has full row rank, it can be factored as $\vec X = \vec Q \vec R$ where $\vec Q \in \mathbb C^{k\times k}$ is unitary and $\vec R \in \mathbb C^{k\times n}$ is in row echelon form with positive pivots. Substituting into $\vec A = \vec X^* \vec X$ gives $\vec A = \vec R^* \vec Q^* \vec Q \vec R =  \vec R^* \vec R$.

    Suppose $\vec{\tilde R}$ is another matrix in row echelon form with positive pivots such that $\vec A = \vec{\tilde R}^* \vec{\tilde R}$. The pivot locations of $\vec R$ and $\vec{\tilde R}$ must coincide; otherwise, the submatrix of $\vec R$ formed by the pivot columns is nonsingular while the corresponding submatrix of $\vec{\tilde R}$ is singular. Moreover, defining\footnote{Here $\vec A^\dagger$ denotes the Moore-Penrose pseudoinverse of $\vec A$.} $\vec U:=\vec R\tilde{\vec R}^\dagger\in\mathbb C^{k\times k}$, we have
    \begin{align*}
        \vec U^*\vec U = (\tilde{\vec R}^\dagger)^*\vec R^* \vec R\tilde{\vec R}^\dagger = (\tilde{\vec R} \tilde{\vec R}^\dagger)^* (\tilde{\vec R}\tilde{\vec R}^\dagger) = \vec I_k,
    \end{align*}
    and
    \begin{align*}
        \vec U \tilde{\vec R} = \vec R \tilde{\vec R}^\dagger \tilde{\vec R} = (\vec R \vec R^*)^{-1}(\vec R\vec R^*) \vec R \tilde{\vec R}^\dagger\tilde{\vec R} = (\vec R\vec R^*)^{-1}\vec R \tilde{\vec R}^* (\tilde{\vec R}\tilde{\vec R}^\dagger) \tilde{\vec R} = (\vec R \vec R^*)^{-1}(\vec R \vec R^*) \vec R = \vec R, 
    \end{align*}
    where we used the facts that $\vec R^*\vec R = \tilde{\vec R}^*\tilde{\vec R}$ and $\tilde{\vec R} \tilde{\vec R}^\dagger = \vec I_k$. We now claim that $\vec U$ is upper triangular. Indeed, if $\vec U_{ij}\neq 0$ for $j<i$, the $i$-th row of $\vec R$ is given by $\vec r_i = \sum_{j=1}^k \vec U_{ij}\vec{\tilde r}_j$, and thus has a nonzero entry before its pivot. Since $\vec U$ is unitary and all the pivots are positive, we conclude that $\vec U = \vec I_k$ and therefore $\vec R= \vec{\tilde R}$.
\end{proof}

\begin{theorem}\label{thm:(n-1)Pol}
    Assume $\bmu$ is $n$-definite and $\mathrm{rank}~\bgamma_{n-1} = k-\ell$ with $0\leq\ell<k$, where $\bgamma_{n-1}$ is defined in \eqref{eq:MonicOrthogCond}. Let $\{\vec P_j(\diamond,\bmu)\}_{j=0}^{n-2}$ denote the orthonormal polynomials associated with $\bmu$, with recurrence coefficients $\{\vec A_j(\bmu)\}_{j=0}^{n-3}$ and $\{\vec B_j(\bmu)\}_{j=0}^{n-3}$. Consider the polynomial
    \begin{align}\label{eq:Pdef}
        \vec P(x):=\vec P(x,\bmu) = x\vec P_{n-2}(x,\bmu)-\vec P_{n-2}(x,\bmu)\vec A_{n-2}-\vec P_{n-3}(x,\bmu)\vec B_{n-3}^*, 
    \end{align}
    where
    \begin{align*}
        \vec A_{n-2} := \vec A_{n-2}(\bmu) =  \langle \vec P_{n-2}(\diamond,\bmu),\diamond,\vec P_{n-2}(\diamond,\bmu) \rangle_\bmu,
    \end{align*}
    and define $\vec B_{n-2}:=\vec B_{n-2}(\bmu)\in \mathbb C^{(k-\ell)\times k}$ as the unique matrix in row echelon form with positive pivots satisfying $\langle\vec P,\vec P \rangle_\bmu=\vec B_{n-2}^*\vec B_{n-2}$. Then, the polynomial
    \begin{align}\label{eq:Pnm1Def}
        \vec P_{n-1}(x) := \vec P_{n-1}(x,\bmu) =  \vec P(x,\bmu) \vec B_{n-2}^\dagger \in \mathbb C^{k\times (k-\ell)},
    \end{align}
    satisfies
    \begin{align}\label{eq:Pnm1OrthogCond}
        \langle \vec P_{n-1},\vec P_j \rangle_\bmu = \vec 0 \quad \text{for }j<n-1, \quad \langle \vec P_{n-1},\vec P_{n-1}\rangle_\bmu = \vec I_{k-\ell}, \quad \text{and} \quad \langle \vec P_{n-1},\diamond \vec P_{n-2}(\diamond) \rangle_\bmu = \vec B_{n-2}.
    \end{align}
\end{theorem}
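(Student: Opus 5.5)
The plan is to first identify $\vec P$ with a normalized version of the monic polynomial $\vec \Pi_{n-1}$, then read off the three relations in \eqref{eq:Pnm1OrthogCond} from the orthogonality of $\vec \Pi_{n-1}$ and from Lemma~\ref{lem:REF}.

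\emph{Orthogonality.} Since $\vec P_{n-2}=\vec\Pi_{n-2}\bgamma_{n-2}^{-1/2}\vec Q_{n-2}$ with $\bgamma_{n-2}$ invertible (Theorem~\ref{Thm:DefiniteProp}), $\vec P$ has degree $n-1$ with leading coefficient $\vec M:=\bgamma_{n-2}^{-1/2}\vec Q_{n-2}$, which is invertible. By Lemma~\ref{lem:low_deg_expansion}, $\vec P-\vec\Pi_{n-1}\vec M$ has degree at most $n-2$. I will show $\langle \vec P_j,\vec P\rangle_\bmu=\vec 0$ for $j\le n-2$ directly from \eqref{eq:Pdef}:
\begin{itemize}
\item for $j\le n-4$, write $\langle \vec P_j,\diamond\vec P_{n-2}\rangle_\bmu=\langle \diamond\vec P_j,\vec P_{n-2}\rangle_\bmu$ and expand $x\vec P_j$ by \eqref{eq:OrthonormRec}; it then only involves $\vec P_{j+1},\vec P_j,\vec P_{j-1}$, all orthogonal to $\vec P_{n-2}$;
\item for $j=n-3$, the recurrence gives $\langle \vec P_{n-3},\diamond\vec P_{n-2}\rangle_\bmu=\vec B_{n-3}^*$, which cancels the last term of \eqref{eq:Pdef};
\item for $j=n-2$, the definition of $\vec A_{n-2}$ cancels the middle term.
\end{itemize}
Hence $\vec P-\vec\Pi_{n-1}\vec M$ is orthogonal to all polynomials of degree $\le n-2$ and is itself of degree $\le n-2$, so its semi-norm vanishes. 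Therefore
\[
\langle \vec P,\vec P\rangle_\bmu=\vec M^*\bgamma_{n-1}\vec M,
\]
which has rank $k-\ell$. Consequently $\vec B_{n-2}$ exists and is unique by Lemma~\ref{lem:REF}. The relations $\langle \vec P_{n-1},\vec P_j\rangle_\bmu=(\vec B_{n-2}^\dagger)^*\langle \vec P,\vec P_j\rangle_\bmu=\vec 0$ then follow immediately.

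\emph{Normalization.} Since $\vec B_{n-2}$ has full row rank, $\vec B_{n-2}\vec B_{n-2}^\dagger=\vec I_{k-\ell}$. Hence
\[
\langle \vec P_{n-1},\vec P_{n-1}\rangle_\bmu=(\vec B_{n-2}^\dagger)^*\vec B_{n-2}^*\vec B_{n-2}\vec B_{n-2}^\dagger=\vec I_{k-\ell}.
\]

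\emph{Last relation.} I expect this to be the main subtlety, because $\vec P_{n-1}$ only captures the nondegenerate part of $\vec P$. First, by orthogonality, $\langle \vec P,\diamond\vec P_{n-2}\rangle_\bmu=\langle\vec P,\vec P\rangle_\bmu$: substituting $x\vec P_{n-2}=\vec P+\vec P_{n-2}\vec A_{n-2}+\vec P_{n-3}\vec B_{n-3}^*$, the last two terms pair to zero with $\vec P$. This gives
\[
\langle \vec P_{n-1},\diamond\vec P_{n-2}\rangle_\bmu=(\vec B_{n-2}^\dagger)^*\vec B_{n-2}^*\vec B_{n-2}=(\vec B_{n-2}\vec B_{n-2}^\dagger)^*\vec B_{n-2}=\vec B_{n-2},
\]
using again $\vec B_{n-2}\vec B_{n-2}^\dagger=\vec I_{k-\ell}$. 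This is the only point where the pseudoinverse identity for full row rank is essential, so I will state it explicitly there.
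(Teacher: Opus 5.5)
Your proposal is correct and follows essentially the same route as the paper. It shows that $\vec P$ is orthogonal to $\vec P_j$ for $j\le n-2$ using the recurrence and the definitions of $\vec A_{n-2}$ and $\vec B_{n-3}$, obtains $\vec B_{n-2}$ from Lemma~\ref{lem:REF}, normalizes via $\vec B_{n-2}\vec B_{n-2}^\dagger=\vec I_{k-\ell}$, and gets the last relation from $\langle \vec P,\diamond\vec P_{n-2}(\diamond)\rangle_\bmu=\langle\vec P,\vec P\rangle_\bmu$. The one difference is in your favor: you prove orthogonality before the rank claim and use it to justify $\langle\vec P,\vec P\rangle_\bmu=\vec M^*\bgamma_{n-1}\vec M$, which makes rigorous the rank identity the paper asserts from the leading coefficient alone.
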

\begin{proof}
    We first verify that $\vec P_{n-1}$ is well-defined. From the recurrence of the orthonormal polynomials in \eqref{eq:OrthonormRec}, the leading coefficient of $\vec P$ is given by $\vec B_0^{-1}\dots \vec B_{n-3}^{-1}$, which implies
    \begin{align*}
        \mathrm{rank} \langle \vec P,\vec P\rangle_\bmu = \mathrm{rank}\langle \vec \Pi_{n-1},\vec \Pi_{n-1}\rangle_\bmu = k-\ell.
    \end{align*}
    Lemma~\ref{lem:REF} then guarantees the existence and uniqueness of $\vec B_{n-2}$. Since $\vec B_{n-2}$ has full row rank, it admits a right inverse, and therefore $\vec P_{n-1}$ in \eqref{eq:Pnm1Def} is well-defined. Now, recall that 
    \begin{align*}
        \vec B_{n-3} = \langle \vec P_{n-2}, \diamond\vec P_{n-3}(\diamond) \rangle_\bmu \quad \text{and} \quad \vec A_{n-2} = \langle \vec P_{n-2},\diamond\vec P_{n-2}(\diamond) \rangle_\bmu,
    \end{align*}
    so $\langle \vec P,\vec P_j\rangle_\bmu = \vec 0$ for $j<n-1$, and the orthonormality conditions in \eqref{eq:Pnm1OrthogCond} follow from the definition of $\vec B_{n-2}$. Finally, the identity $\vec B_{n-2} = \langle \vec P_{n-1}, \diamond \vec P_{n-2}(\diamond) \rangle_\bmu$ follows from 
    \begin{align*}
        \langle \vec P,x\vec P_{n-2} \rangle_\bmu = \langle \vec P,\vec P \rangle_\bmu = \vec B_{n-2}^*\vec B_{n-2},
    \end{align*}
    upon left multiplication by $(\vec B_{n-2}^\dagger)^*$.
\end{proof}

\begin{remark}
    For consistency with \eqref{eq:RecCoefForm}, we extend the definition of $\vec A_j$ to $j=n-1$ by setting
    \begin{align*}
        \vec A_{n-1} := \vec A_{n-1}(\bmu) = \langle \vec P_{n-1}(\diamond,\bmu),\diamond\,\vec P_{n-1}(\diamond,\bmu)\rangle_\bmu \in \mathbb C^{(k-\ell)\times (k-\ell)}
    \end{align*}
    This coefficient, together with $\vec A_{n-2}$ and $\vec B_{n-2}$ in Theorem~\ref{thm:(n-1)Pol}, will be important for the inverse spectral analysis carried out in Section~\ref{subsec:InvSpectMap}.
\end{remark}

\section{The spectral map and its inverse for Banded Hermitian matrices}\label{sec:spectralmap}

\subsection{The class \texorpdfstring{$\JkN$}{JkN} and its spectral properties}

In this section, we study the class of banded Hermitian matrices introduced in \eqref{eq:BandedMatDef} and their connection to matrix-valued measures. We first formalize the set of all such matrices and introduce notation that will be used throughout the paper.




\begin{definition}\label{def:JkN}
We denote by $\JkN$ the set of all $N\times N$ Hermitian matrices admitting a block tridiagonal representation of the form \eqref{eq:BandedMatDef} with $n=\lceil N/k\rceil$.  For each $\vec J\in \JkN$, we denote the diagonal and sub-diagonal blocks of $\vec J$ respectively by 
\begin{align*}
    \vec A_j:=\vec A_j(\vec J), \quad j=0,\dots,n-1, \quad \text{and} \quad \vec B_j = \vec B_j(\vec J), \quad j=0,\dotsm,n-2.
\end{align*}
These blocks satisfy the following conditions, with $\ell=nk-N$:
\begin{enumerate}
    \item $\vec A_0,\dots,\vec A_{n-2}\in \mathbb C^{k\times k}$ and $\vec A_{n-1}\in \mathbb C^{(k-\ell)\times (k-\ell)}$ are Hermitian,
    \item $\vec B_0,\dots,\vec B_{n-3}\in \mathbb C^{k\times k}$ and $\vec B_{n-2}\in \mathbb C^{(k-\ell)\times k}$ have full rank,
    \item $\vec B_0, \dots, \vec B_{n-3}$ are upper triangular with strictly positive diagonal entries,
    \item $\vec B_{n-2}$ is in row echelon form with positive leading entries.
\end{enumerate}
\end{definition}

The following result describes eigenvectors corresponding to repeated eigenvalues and the multiplicity of these eigenvalues.

\begin{lemma}\label{lem:LinIndep}
    Let $\vec J \in \JkN$ and suppose $\vec J \vec q_j = \lambda_j \vec q_j$, $j = 1,2,\ldots,N$, where $\{\vec q_j\}_{j=1}^N$ is an orthonormal basis and $\lambda_j \leq \lambda_{j+1}$.  If $\lambda_\ell = \lambda_j$ for $\ell = j,\ldots,j+p$, then\footnote{We use the notation $\vec x_{i:j}$ to denote the entries of $\vec x$ from index $i$ through $j$.}
    \begin{align*}
        (\vec q_j)_{1:k}, (\vec q_{j+1})_{1:k}, \ldots, (\vec q_{j+p})_{1:k},
    \end{align*}
    are linearly independent. As a result, every eigenvalue of $\vec J$ has multiplicity at most $k$.
\end{lemma}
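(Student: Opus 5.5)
We argue by contradiction, using the block structure of $\vec J$ to show that an eigenvector whose first $k$ entries vanish must vanish identically. Suppose there are scalars $c_j,\dots,c_{j+p}$, not all zero, with $\sum_{i=j}^{j+p} c_i (\vec q_i)_{1:k} = \vec 0$. Set $\vec q := \sum_{i=j}^{j+p} c_i \vec q_i$. Since all these eigenvectors share the eigenvalue $\lambda_j$, we have $\vec J\vec q = \lambda_j \vec q$. Because the $\vec q_i$ are orthonormal and the $c_i$ are not all zero, $\vec q \neq \vec 0$. The goal is to show that $\vec q = \vec 0$, which gives the contradiction.

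Partition $\vec q$ conformally with the blocks in \eqref{eq:BandedMatDef}, writing $\vec q = [\vec x_0^*, \vec x_1^*, \dots, \vec x_{n-1}^*]^*$ with $\vec x_0,\dots,\vec x_{n-2}\in\mathbb C^k$ and $\vec x_{n-1}\in\mathbb C^{k-\ell}$. By assumption $\vec x_0 = \vec 0$. We show $\vec x_{m}=\vec 0$ for every $m$ by induction, reading off the block rows of $\vec J\vec q = \lambda_j\vec q$. Block row $m$ (for $m\le n-2$) reads
\begin{align*}
\vec B_{m-1}\vec x_{m-1} + \vec A_m \vec x_m + \vec B_m^* \vec x_{m+1} = \lambda_j \vec x_m ,
\end{align*}
with the convention $\vec B_{-1}\vec x_{-1}=\vec 0$. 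If $\vec x_{m-1}=\vec x_m=\vec 0$, this reduces to $\vec B_m^*\vec x_{m+1}=\vec 0$.

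For $m\le n-3$, $\vec B_m$ is square and invertible (upper triangular with positive diagonal), so $\vec x_{m+1}=\vec 0$. At the last step $m=n-2$, $\vec B_{n-2}\in\mathbb C^{(k-\ell)\times k}$ has full row rank $k-\ell$. Hence $\vec B_{n-2}^*\in\mathbb C^{k\times(k-\ell)}$ has full column rank and is injective, so again $\vec x_{n-1}=\vec 0$. This last step is the only place where the degenerate final block needs attention, and it is exactly where full rank (rather than invertibility) of $\vec B_{n-2}$ suffices; the one care point is to use the injectivity of $\vec B_{n-2}^*$ rather than of $\vec B_{n-2}$. The base case $m=0$ also holds directly: block row $0$ gives $\vec A_0\vec x_0+\vec B_0^*\vec x_1=\lambda_j\vec x_0$, so $\vec x_0=\vec 0$ forces $\vec x_1=\vec 0$. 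Thus $\vec q=\vec 0$, a contradiction, and the vectors $(\vec q_i)_{1:k}$ are linearly independent.

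For the multiplicity statement, if $\lambda$ had multiplicity $m>k$, the eigenspace would contain $m$ orthonormal eigenvectors. Their first-$k$ truncations would then be $m>k$ linearly independent vectors in $\mathbb C^k$, which is impossible. Hence every eigenvalue of $\vec J$ has multiplicity at most $k$.
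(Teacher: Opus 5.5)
Your proof is correct and follows essentially the same route as the paper: you form a nonzero eigenvector whose first block vanishes, then propagate zeros down the block rows of $(\vec J-\lambda_j)\vec q=\vec 0$, using invertibility of $\vec B_0,\dots,\vec B_{n-3}$ and injectivity (left-invertibility) of $\vec B_{n-2}^*$ at the final step. The multiplicity bound then follows exactly as in the paper, since more than $k$ linearly independent vectors cannot lie in $\mathbb C^k$.
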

\begin{proof}
    Suppose, for contradiction, that the vectors $(\vec q_j)_{1:k}, \ldots, (\vec q_{j+p})_{1:k}$ are linearly dependent, then there exists a linear combination that gives an eigenvector $\vec v$ for $\lambda_j$ that has its first $k$ entries being all zeros, i.e. 
    \begin{align*}
        \vec v = \begin{bmatrix} \vec 0^T & \vec v_1^T& \cdots~ & \vec v_{n-1}^T \end{bmatrix}^T \in \mathbb C^N.
    \end{align*}
    Using the recurrence implied by $(\vec J - \lambda_j) \vec v = \vec 0$, we find
    \begin{align*}
        \vec A_0 \vec 0 + \vec B^*_0 \vec v_1 &= \vec 0 \quad \Rightarrow \quad \vec v_1 = \vec 0,\\
        &\vdots\\
         \vec B_{j-1} \vec 0 + \vec A_j \vec 0 + \vec B_j^* \vec v_{j+1} &= \vec 0 \quad \Rightarrow \quad \vec v_{j+1} = \vec 0,\\
         &\vdots\\
         \vec B_{n-3} \vec 0 + \vec A_{n-2} \vec 0 + \vec B_{n-2}^* \vec v_{n-1} &= \vec 0 \quad \Rightarrow \quad \vec v_{n-1} = \vec 0.
    \end{align*}
    The last equality follows from the fact that $\vec B_{n-2}^*$ has a left inverse. This implies that 
    \begin{align*}
        \vec q_j, \vec q_{j+1}, \ldots, \vec q_{j+p},
    \end{align*}
    are linearly dependent, a contradiction. As a direct consequence, if an eigenvalue $\lambda_j$ had multiplicity $p > k$, the corresponding vectors $(\vec q_j)_{1:k}, \ldots, (\vec q_{j+p-1})_{1:k}$ would be linearly dependent, which is impossible. Therefore, the multiplicity of any eigenvalue cannot exceed $k$.
\end{proof}


\subsection{Matrix polynomial maps}

Let $\vec P(x) = \sum_{j=0}^p \Phi_j\,x^j$ be a matrix polynomial with coefficients $\Phi_j \in \mathbb C^{k\times k}$. For $\vec J\in \mathbb C^{N\times N}$ and $\vec E\in\mathbb C^{N\times k}$, we define the matrix polynomial map
\begin{align}\label{eq:PolMapDef}
    \vec P(\vec J)\circ\vec E := \sum_{j=0}^p \vec J^j\,\vec E\,\Phi_j \in \mathbb C^{N\times k}.
\end{align}
This notation was first introduced in \cite{kent_chebyshev_1989} and has been used in \cite{casulli_efficient_2024,frommer_block_2020,simoncini_convergence_1996,simoncini_ritz_1996,elsworth_block_2020} to analyze block Krylov subspaces. An analogous definition applies to vector-valued polynomials, i.e., when $\Phi_j \in \mathbb C^k$. We collect the elementary properties of the polynomial map in the following lemma.
\begin{lemma}\label{lem:PolMapProp}
    Let $\vec P,\vec Q$, and $\vec R$ be matrix polynomials and let $j$ be a non-negative integer. The polynomial map, defined in \eqref{eq:PolMapDef}, satisfies the following relations:
    \begin{enumerate}
        \item if $\vec P(x) = \vec Q(x)+\vec R(x)$, then $\vec P(\vec J)\circ \vec E = \left(\vec Q(\vec J)\circ \vec E\right) + \left(\vec R(\vec J) \circ \vec E\right)$,
        \item if $\vec P(x) = x^j \,\vec Q(x)$, then $\vec P(\vec J)\circ \vec E = \vec J^j\,\left(\vec Q(\vec J)\circ \vec E\right)$,
        \item if $\vec P(x) = \vec Q(x)\, \vec C$ for some $\vec C\in \mathbb C^{k\times k}$, then $\vec P(\vec J)\circ \vec E = \left(\vec Q(\vec J) \circ \vec E\right)\,\vec C$.
    \end{enumerate}
\end{lemma}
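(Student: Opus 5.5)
Each of the three properties follows directly from the definition \eqref{eq:PolMapDef}. The map $\vec P\mapsto \vec P(\vec J)\circ\vec E$ is defined coefficientwise. So in every case I will write out the coefficient sequence of the polynomial on the left-hand side, substitute it into \eqref{eq:PolMapDef}, and regroup the finite sums. Throughout, write $\vec Q(x)=\sum_{i=0}^q \Psi_i x^i$ and $\vec R(x)=\sum_{i=0}^r \Theta_i x^i$. Pad with zero coefficients up to a common degree $m$ where needed; zero coefficients contribute nothing to \eqref{eq:PolMapDef}, so the map is well defined regardless of this padding.

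For (1), $\vec P=\vec Q+\vec R$ has coefficients $\Phi_i=\Psi_i+\Theta_i$. Then
\[\sum_i \vec J^i\vec E(\Psi_i+\Theta_i)=\sum_i\vec J^i\vec E\Psi_i+\sum_i\vec J^i\vec E\Theta_i,\]
by distributivity of matrix multiplication.

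For (2), $x^j\vec Q(x)=\sum_i \Psi_i x^{i+j}$, so its coefficients are $\Phi_{i+j}=\Psi_i$ and $\Phi_s=\vec 0$ for $s<j$. Here I use that $x$ is a scalar variable, so $x^j$ commutes with $\Psi_i$. Hence
\[\vec P(\vec J)\circ\vec E=\sum_i \vec J^{i+j}\vec E\Psi_i=\vec J^j\sum_i\vec J^i\vec E\Psi_i=\vec J^j\bigl(\vec Q(\vec J)\circ\vec E\bigr).\]

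For (3), $\vec Q(x)\vec C=\sum_i(\Psi_i\vec C)x^i$. By associativity,
\[\sum_i\vec J^i\vec E(\Psi_i\vec C)=\Bigl(\sum_i\vec J^i\vec E\Psi_i\Bigr)\vec C.\]

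There is no genuine obstacle here. The only point needing care is the ordering convention: coefficients act on the right of $\vec E$ while powers of $\vec J$ act on the left. This is exactly why (2) allows a left factor $x^j$ (a scalar power) and (3) allows only a right constant factor $\vec C$. A left matrix factor $\vec C\vec Q(x)$ would not in general give $\vec C$ times anything, since $\Phi_i$ sits to the right of $\vec E$. The same computations apply verbatim for vector-valued coefficients $\Phi_j\in\mathbb C^k$, with $\vec C$ replaced by a scalar.
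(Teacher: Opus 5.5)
Your proof is correct. The paper states this lemma without proof, as an elementary consequence of \eqref{eq:PolMapDef}, and your coefficientwise substitution and regrouping is exactly that intended verification. One further point: your argument for (3) works verbatim for any rectangular $\vec C$ of compatible size, such as $\vec B_{n-2}^\dagger\in\mathbb C^{k\times(k-\ell)}$, and this is how the property is actually used in Lemma~\ref{lem:OpsFromBanded}.
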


Throughout most of this document, $\vec E$ is chosen as a block selection matrix $\vec E_j$ defined by
\begin{align}\label{eq:SelMat}
    \vec E_j := \vec E_{j}^{N,k} = \begin{bmatrix}
        \vec 0_{k\times k} &\cdots & \vec 0_{k\times k}& \vec I_k & \vec 0_{k\times k} & \cdots & \vec 0_{k\times (k-\ell)}
    \end{bmatrix}^T \in \mathbb C^{N\times k} \quad j = 1,\dots,n-1,
\end{align}
where $\ell = nk-N$ and $n=\lceil N/k\rceil$. In other words, all blocks of $\vec E_j$ are zero except for the $j$-th block of size $k\times k$, which is the identity. 
For simplicity, the superscripts will be omitted when the dimensions are clear from context.

\begin{lemma}\label{lem:PowersJ}
    Consider $\vec J \in \JkN$ and let $\{\vec B_j\}_{j=0}^{n-2}$ denote its subdiagonal blocks. For any integer $1 \le j \le \lceil N/k\rceil-1$, we have
    \begin{align}\label{eq:PowersJ}
    \vec E_{j+1}^* \vec J^j \vec E_1 = \vec B_{j-1} \vec B_{j-2} \cdots \vec B_0.
    \end{align}
\end{lemma}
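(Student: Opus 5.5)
We argue by induction on $j$, proving the stronger statement that $\vec J^j \vec E_1$ vanishes below block row $j+1$, and that its $(j+1)$-th block row equals $\vec B_{j-1}\cdots\vec B_0$. Block rows are indexed from $1$ to $n$, so that $\vec E_{i}^*$ extracts block row $i$. Block rows $1,\dots,n-1$ have $k$ rows and block row $n$ has $k-\ell$ rows; $\vec E_{n}$ is interpreted accordingly when $j=n-1$, since then $\vec B_{n-2}$ is $(k-\ell)\times k$.

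For the base case $j=1$, the vector $\vec J\vec E_1$ is the first block column of $\vec J$, namely $[\vec A_0;\vec B_0;\vec 0;\dots]$. Its second block row is $\vec B_0$ and everything below is zero.

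For the inductive step, suppose $\vec X := \vec J^{j}\vec E_1$ is supported in block rows $1,\dots,j+1$ with $\vec E_{j+1}^*\vec X = \vec B_{j-1}\cdots\vec B_0$. Write $\vec J^{j+1}\vec E_1 = \vec J\vec X$. By the block tridiagonal structure \eqref{eq:BandedMatDef}, block row $i$ of $\vec J\vec X$ equals
\[
\vec B_{i-2}\,\vec X_{i-1} + \vec A_{i-1}\,\vec X_i + \vec B_{i-1}^*\,\vec X_{i+1},
\]
where $\vec X_i$ denotes block row $i$ of $\vec X$. Two consequences follow.

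\begin{itemize}
\item For $i\ge j+3$, all three of $\vec X_{i-1},\vec X_i,\vec X_{i+1}$ lie below block row $j+1$, so they vanish and the block row is zero.
\item For $i=j+2$, only the term $\vec B_{j}\vec X_{j+1}$ survives, since $\vec X_{j+2}=\vec X_{j+3}=\vec 0$. This gives $\vec B_j\vec B_{j-1}\cdots\vec B_0$.
\end{itemize}

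This closes the induction for all $j+1\le n-1$, that is, up to $j = n-1$.

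The only point needing care is the final step $j=n-1$, where the last block row has the reduced size $k-\ell$ and $\vec B_{n-2}\in\mathbb C^{(k-\ell)\times k}$. We check that the block multiplication remains dimensionally consistent there, with $\vec E_n$ selecting the last $k-\ell$ rows. No triangularity or full-rank hypotheses from Definition~\ref{def:JkN} are used; the argument relies only on the banded sparsity pattern.
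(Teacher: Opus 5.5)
Your proof is correct and follows the same induction as the paper: write $\vec J^{j}\vec E_1 = \vec J(\vec J^{j-1}\vec E_1)$ and read off block row $j+1$ as $\vec B_{j-1}$ times the previous product. Your version is slightly more careful, because you put into the inductive hypothesis the fact that $\vec J^{j}\vec E_1$ vanishes below block row $j+1$, which the paper uses without including it in the induction.
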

\begin{proof}
    For $j=1$, the statement is trivially true. Now, suppose that \eqref{eq:PowersJ} holds for $j-1$, then
    \begin{align*}
        \vec J^j \vec E_1 = \vec J \vec J^{j-1} \vec E_1  = \vec J \begin{bmatrix}
            \vec X_1^T & \vec X_2^T & \dots & \vec X_{j}^T & \vec 0^T & \cdots & \vec 0^T
        \end{bmatrix}^T, \quad \text{with} \quad \vec X_j = \vec B_{j-2} \cdots \vec B_0.
    \end{align*}
    This implies that $\vec E_{j+1}^* \vec J^j \vec E_1 = \vec B_{j-1} \vec X_{j}$.
\end{proof}


\begin{lemma}\label{lem:OpsFromBanded}
    Consider $\vec J \in \JkN$ with associated blocks $\{\vec A_j\}_{j=0}^{n-1}$ and $\{\vec B_j\}_{j=0}^{n-2}$ as in Definition~\ref{def:JkN}. Let $\vec P_{-1}(x)= \vec 0_k$ and $\vec P_0(x)=\vec I_k$, and define a sequence of matrix polynomials $\{\vec P_j\}_{j=1}^{n-1}$ recursively by
    \begin{align}\label{eq:PolRec}
        \vec P_{j}(x) = \vec Q_{j}(x)  \vec B_{j-1}^\dagger, \qquad \vec Q_{j}(x) = 
        x \vec P_{j-1}(x) - \vec P_{j-1}(x)\vec A_{j-1} - \vec P_{j-2}(x)\vec B^*_{j-2}, 
        \qquad j=1,\dots,n-1,
    \end{align}
    where $\vec B_{j-1}^\dagger$ denotes the right inverse of $\vec B_{j-1}$ and $\vec B_{-1}=\vec I_k$. Then, for each $j=0,\dots,n-1$, we have
    \begin{align}\label{eq:OpsFromBandedProp}
        \vec P_j(\vec J)\circ \vec E_1  = \vec E_{j+1}.
    \end{align}
\end{lemma}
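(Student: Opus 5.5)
The plan is induction on $j$, using the block structure of $\vec J$ column by column; this is essentially reading off the block columns of the identity $\vec J\vec E_{j} = \vec E_{j-1}\vec B_{j-2}^* + \vec E_{j}\vec A_{j-1} + \vec E_{j+1}\vec B_{j-1}$. The base cases are immediate: $\vec P_0(\vec J)\circ\vec E_1=\vec E_1$ by definition of the polynomial map, and $\vec P_{-1}(\vec J)\circ \vec E_1=\vec 0$. For the last block column, interpret $\vec E_n\in\mathbb C^{N\times(k-\ell)}$ as the selection of the final $k-\ell$ coordinates. Read in this way, the claim at $j=n-1$ is $\vec P_{n-1}(\vec J)\circ\vec E_1=\vec E_n$. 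This is the natural meaning, since $\vec P_{n-1}$ is $k\times(k-\ell)$, and I would state this convention explicitly.

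For the inductive step, assume \eqref{eq:OpsFromBandedProp} holds for $j-1$ and $j-2$. Apply Lemma~\ref{lem:PolMapProp}, items (1)–(3), to the definition of $\vec Q_j$:
\begin{align*}
\vec Q_j(\vec J)\circ\vec E_1 = \vec J\vec E_{j} - \vec E_{j}\vec A_{j-1} - \vec E_{j-1}\vec B_{j-2}^*.
\end{align*}
Item (3) must be used with a non-square right factor, for example $\vec B^\dagger_{j-1}\in\mathbb C^{k\times(k-\ell)}$. It extends verbatim, since the map is linear in the coefficients.

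Next, read off the $j$-th block column of $\vec J$ from \eqref{eq:BandedMatDef}. This block column is $\vec J\vec E_j$. Its $(j-1)$-th block is $\vec B_{j-2}^*$, its $j$-th block is $\vec A_{j-1}$, its $(j+1)$-th block is $\vec B_{j-1}$, and all other blocks are zero. Hence
\begin{align*}
\vec J\vec E_j=\vec E_{j-1}\vec B_{j-2}^*+\vec E_j\vec A_{j-1}+\vec E_{j+1}\vec B_{j-1},
\end{align*}
with the term $\vec E_0$ absent when $j=1$. This gives $\vec Q_j(\vec J)\circ\vec E_1=\vec E_{j+1}\vec B_{j-1}$.

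Finally, multiply on the right by $\vec B_{j-1}^\dagger$ and use item (3) again. Because $\vec B_{j-1}$ has full row rank, $\vec B_{j-1}\vec B_{j-1}^\dagger=\vec I$. For $j\le n-2$ this is simply $\vec B_{j-1}^{-1}$. For $j=n-1$ it is the right inverse of the $(k-\ell)\times k$ echelon matrix $\vec B_{n-2}$. In both cases we obtain $\vec P_j(\vec J)\circ\vec E_1=\vec E_{j+1}$.

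The only delicate point is bookkeeping at the last step. There, $\vec E_n$ has $k-\ell$ columns, and the sub-diagonal block $\vec B_{n-2}$ is rectangular. The product $\vec E_n\vec B_{n-2}\vec B_{n-2}^\dagger$ must therefore be handled with the right-inverse identity rather than invertibility, which is exactly where full row rank from Definition~\ref{def:JkN} is used.
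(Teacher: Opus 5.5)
Your proposal is correct and follows the same route as the paper: apply Lemma~\ref{lem:PolMapProp} to the recurrence \eqref{eq:PolRec}, compare with the block-column identity $\vec J\vec E_j=\vec E_{j-1}\vec B_{j-2}^*+\vec E_j\vec A_{j-1}+\vec E_{j+1}\vec B_{j-1}$, and cancel $\vec B_{j-1}$ using its right inverse. Your explicit treatment of the last step, where you take $\vec E_n$ to be an $N\times(k-\ell)$ selection matrix and use $\vec B_{n-2}\vec B_{n-2}^\dagger=\vec I_{k-\ell}$, makes precise a point the paper leaves implicit.
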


\begin{proof}
    For $j=0$, the claim is trivially satisfied. Next, for $j=1,\dots,n-1$, using Lemma~\ref{lem:PolMapProp} together with \eqref{eq:PolRec}, we find
    \begin{align*}
        \vec P_j(\vec J)\circ\vec E_1 = \vec J \left( \vec P_{j-1}(\vec J)\circ\vec E_1 \right)\vec B_{j-1}^\dagger - \left( \vec P_{j-1}(\vec J)\circ\vec E_1 \right)\vec A_{j-1}\vec B_{j-1}^\dagger - \left( \vec P_{j-2}(\vec J)\circ\vec E_1 \right)\vec B_{j-2}^* \vec B_{j-1}^\dagger.
    \end{align*}
    From the definition of $\vec J$ in \eqref{eq:BandedMatDef}, we have
    \begin{equation}\label{eq:ERec}
        \begin{aligned}
        \vec J \vec E_{j} = \vec E_{j-1}\vec B_{j-2}^*+\vec E_{j}\vec A_{j-1} + \vec E_{j+1}\vec B_{j-1}, \quad 1\leq j\leq n-1.
        \end{aligned}
    \end{equation}
    Multiplying \eqref{eq:ERec} on the right by $\vec B_{j-1}^\dagger$, we see that $\vec E_{j+1}$ satisfies the same recurrence as $\vec P_j(\vec J)\circ\vec E_1$, which completes the proof.
\end{proof}

\subsection{The spectral map}

We define the spectral map $\varphi:  \JkN \to \mathcal{P}_k(\mathbb R)$ by
\begin{align}\label{eq:SpectMapDef1}
    \varphi(\vec J) = \sum_{j=1}^N \vec v_j \vec v_j^* \delta_{\lambda_j},
\end{align}
where $\{\lambda_j\}_{j=1}^N$ are the eigenvalues of $\vec J$ and $\vec v_j $ is the first $k$ components of the $j$th normalized eigenvector. Equivalently, $\varphi(\vec J)$ can be written as
\begin{align}\label{eq:SpectMapDef2}
    \varphi(\vec J) = \sum_{j=1}^m \vec V_j\vec V_j^* \delta_{x_j},
\end{align}
where $\{x_j\}_{j=1}^m$ are the distinct eigenvalues of $\vec J$, and $\vec V_j$ is the matrix formed by the first $k$ rows of the eigenvector matrix associated with $x_j$.
This definition does not depend on the choice of eigenbasis. If $\vec{\tilde V}_j$ corresponds to a different choice of eigenvectors for $x_j$, then there exists unitary matrix $\vec Q_j$ such that $\tilde{\vec V}_j = \vec V_j \vec Q_j$, and therefore $\vec V_j \vec V_j^* = \vec{\tilde V}_j \vec{\tilde V}_j^*$. It is also worth noting that Lemma~\ref{lem:LinIndep} implies each $\vec V_j \vec V_j^*$ has rank $n_j \le k$, and the total rank satisfies $\sum_{j} n_j = N$ and $\sum_{j} \vec V_j \vec V_j^* = \vec I_k$. 

\begin{remark}\label{remark:HerglotzDef}
    By Herglotz's representation theorem\cite{MatrixHerglotz}, the spectral map $\varphi(\vec J)=\bmu$ admits an equivalent an equivalent characterization as the unique probability measure satisfying
    \begin{align*}
        \vec I_{N\times k}^*\, (\vec J-z)^{-1} \,\vec I_{N\times k} = \int_{\mathbb R}\frac{\bmu(\mathrm{d}x)}{x-z} \quad \text{for }\mathrm{Im}\,z>0.
    \end{align*}
    This formulation is standard in the spectral theory of infinite matrices and operators, and reduces to \eqref{eq:SpectMapDef1} in the finite case.
\end{remark}

\begin{lemma}\label{lem:SpectMapMoments}
    If $\vec J \in \JkN$, then the moments of $\bmu=\varphi(\vec J)$ satisfy
    \begin{align*}
    \int_{\mathbb R} x^i \bmu(\mathrm d x) = {\vec E_1}^*\vec J^i\vec E_1\quad \text{for }i\geq 0.
    \end{align*}
    Moreover, for polynomials $\vec P,\vec Q$, we have
    \begin{align*}
        \langle \vec P, \vec Q \rangle_\bmu = (\vec P(\vec J)\circ\vec E_1)^*(\vec Q(\vec J)\circ\vec E_1).
    \end{align*}
\end{lemma}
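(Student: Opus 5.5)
The plan is to use the spectral decomposition of $\vec J$ directly. Write $\vec J = \vec U \Lambda \vec U^*$, where $\vec U = [\vec q_1,\dots,\vec q_N]$ is unitary with $\vec J\vec q_j = \lambda_j \vec q_j$, and $\Lambda = \mathrm{diag}(\lambda_1,\dots,\lambda_N)$. Since the first $k$ entries of $\vec q_j$ form $\vec v_j$, we have $\vec E_1^*\vec q_j = \vec v_j$, i.e., $\vec E_1^*\vec U = [\vec v_1,\dots,\vec v_N]$. By \eqref{eq:SpectMapDef1},
\begin{align*}
\int_{\mathbb R} x^i\,\bmu(\mathrm dx) = \sum_{j=1}^N \lambda_j^i\,\vec v_j\vec v_j^* = (\vec E_1^*\vec U)\,\Lambda^i\,(\vec E_1^*\vec U)^* = \vec E_1^*\vec U\Lambda^i\vec U^*\vec E_1 = \vec E_1^*\vec J^i\vec E_1,
\end{align*}
which is the first claim. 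The case $i=0$ also confirms $\bmu(\mathbb R)=\vec E_1^*\vec E_1=\vec I_k$, so $\bmu\in\mathcal P_k(\mathbb R)$.

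For the second claim, write $\vec P(x)=\sum_{i}\Phi_i x^i$ and $\vec Q(x)=\sum_m \Psi_m x^m$. By the definition of the right quasi-inner product and the sesquilinearity in property (2),
\begin{align*}
\langle \vec P,\vec Q\rangle_\bmu = \sum_{i,m}\Phi_i^*\Big(\int_{\mathbb R} x^{i+m}\bmu(\mathrm dx)\Big)\Psi_m = \sum_{i,m}\Phi_i^*\,\vec E_1^*\vec J^{i+m}\vec E_1\,\Psi_m ,
\end{align*}
where the last equality uses the moment identity just proved.

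It remains to factor the right-hand side. Since $\vec J$ is Hermitian, $\vec J^{i+m} = (\vec J^i)^*\vec J^m$, so $\Phi_i^*\vec E_1^*\vec J^{i+m}\vec E_1\Psi_m = (\vec J^i\vec E_1\Phi_i)^*(\vec J^m\vec E_1\Psi_m)$. Summing over $i$ and $m$ and using \eqref{eq:PolMapDef} gives $(\vec P(\vec J)\circ\vec E_1)^*(\vec Q(\vec J)\circ\vec E_1)$. Hermiticity of $\vec J$ is the only real input at this step.

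There is no substantive obstacle. The one point needing care is matching the conventions: coefficients act on the right in \eqref{eq:PolMapDef}, and the right product conjugates the first argument, which is what makes $\Phi_i^*$ appear on the left.
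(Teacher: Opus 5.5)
Your proposal is correct and follows essentially the same route as the paper: diagonalize $\vec J=\vec U\Lambda\vec U^*$ to get the moment identity, then expand both polynomials in monomials and substitute the moments. Your explicit note that Hermiticity of $\vec J$ gives $\vec J^{i+m}=(\vec J^i)^*\vec J^m$ simply spells out a step the paper leaves implicit.
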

\begin{proof}
    The first identity follows directly from the definition of the spectral measure. Specifically, let $\vec J = \vec U \Lambda \vec U^*$ denote the eigendecomposition of the banded matrix, then
    \begin{align*}
        \int_{\mathbb R} x^i \bmu(\mathrm d x) = \sum_{j=1}^N \lambda_j^i \vec v_j \vec v_j^* = \vec E_1^* \vec U \Lambda^i\vec U^* \vec E_1 =  \vec E_1^*\vec J^i\vec E_1.
    \end{align*}
    Now, suppose that $\vec P(x) = \sum_{i=0}^p \vec C_ix^i$ and $\vec Q(x) = \sum_{i=0}^q \vec D_i x^i$, then 
    \begin{align*}
        (\vec P(\vec J)\circ\vec E_1)^* (\vec Q(\vec J)\circ\vec E_1) = \sum_{i=0}^p\sum_{j=0}^q \vec C_i^* \left( {\vec E_1}^*\vec J^{i+j}\vec E_1 \right)\vec D_j = \sum_{i=0}^p\sum_{j=0}^q \vec C_i^* \left( \int_{\mathbb R}x^{i+j}\bmu(\mathrm dx) \right)\vec D_j = \langle \vec P, \vec Q \rangle_\bmu.
    \end{align*}
\end{proof}

Theorem~\ref{thm:Nd} shows that the quasi-inner product induced by the measure $\bmu = \varphi(\vec J)$, where $\vec J \in \mathcal{J}_{k,N}$ with $N = kn - \ell$, defines a norm on the space of polynomials of degree at most $n-2$, implying that $\bmu$ is $n$-definite. Moreover, there exist exactly $\ell$ independent vector polynomials of degree $n-1$ with zero norm. These results are fundamental for the identifying the range of the spectral map in Section~\ref{subsec:InvSpectMap}.

\begin{theorem}\label{thm:Nd}
    Let $\bmu = \varphi(\vec J)$, where $\vec J \in \mathcal{J}_{k,N}$ with $N = kn - \ell$, $0 \leq \ell < k$, and
    \begin{align}\label{eq:NdDef}
        \mathcal{N}_d = \left\{ 
            \vec p(x) = \sum_{j=0}^d \vec c_j x^j,~ \vec c_j \in \mathbb C^k: 
            \langle \vec p, \vec p \rangle_\bmu = 0 
        \right\}.
    \end{align}
    Then the dimension of $\mathcal{N}_d$ satisfies
    \begin{align}\label{eq:NdDimEq}
        \dim \mathcal{N}_d =
        \begin{cases}
            0, & \text{if } d < n - 1, \\
            \ell, & \text{if } d = n-1.
        \end{cases}
    \end{align}
\end{theorem}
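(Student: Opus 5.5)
The plan is to turn the quasi-inner product into a Euclidean one on $\mathbb C^N$ via Lemma~\ref{lem:SpectMapMoments}, and then describe explicitly the space of vectors $\vec p(\vec J)\circ\vec E_1$. For a vector polynomial $\vec p(x)=\sum_{j=0}^d \vec c_j x^j$, the vector version of Lemma~\ref{lem:SpectMapMoments} gives
\begin{align*}
\langle \vec p,\vec p\rangle_\bmu = \|\vec p(\vec J)\circ\vec E_1\|^2, \qquad \vec p(\vec J)\circ \vec E_1=\sum_{j=0}^d \vec J^j\vec E_1\vec c_j .
\end{align*}
Hence $\mathcal N_d$ is exactly the kernel of the linear map
\begin{align*}
T_d:\mathbb C^{k(d+1)}\to\mathbb C^N, \qquad (\vec c_0,\dots,\vec c_d)\mapsto \vec K_d\,[\vec c_0^T,\dots,\vec c_d^T]^T, \qquad \vec K_d=[\vec E_1,\ \vec J\vec E_1,\ \dots,\ \vec J^d\vec E_1],
\end{align*}
and $\dim\mathcal N_d = k(d+1)-\mathrm{rank}\,\vec K_d$. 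It therefore suffices to compute $\mathrm{rank}\,\vec K_d$.

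To compute the rank, I will use the banded structure. By induction on $j$, $\vec J^j\vec E_1$ is supported in the first $j+1$ block rows. By Lemma~\ref{lem:PowersJ}, its $(j+1)$-th block is $\vec B_{j-1}\cdots\vec B_0$. Thus $\vec K_d$ is block upper triangular, with diagonal blocks $\vec I_k,\vec B_0,\vec B_1\vec B_0,\dots,\vec B_{d-1}\cdots\vec B_0$.

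For $d\le n-2$, every diagonal block is a product of invertible $k\times k$ upper-triangular matrices with positive diagonals, so it is invertible. Then $\vec K_d$ has full column rank $k(d+1)$, and $\dim\mathcal N_d=0$.

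For $d=n-1$, the last diagonal block is $\vec B_{n-2}\vec B_{n-3}\cdots\vec B_0\in\mathbb C^{(k-\ell)\times k}$. It has rank $k-\ell$, because $\vec B_{n-2}$ has full row rank and the remaining factors are invertible. Block triangularity then gives $\mathrm{rank}\,\vec K_{n-1}=k(n-1)+(k-\ell)=N$, so $\dim\mathcal N_{n-1}=kn-N=\ell$.

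The step that needs some care is the rank count for a block upper-triangular matrix whose last diagonal block is rectangular. I will argue it directly. Take $\vec c\in\ker\vec K_{n-1}$ and look at block rows from the top. The first block row is $\vec c_0+\sum_{j\ge1}(\vec E_1^*\vec J^j\vec E_1)\vec c_j=0$, which determines $\vec c_0$ from the later coefficients. Similarly, the $i$-th block row, for $i\le n-1$, determines $\vec c_{i-1}$ from $\vec c_i,\dots,\vec c_{n-1}$. The last block row imposes $\vec B_{n-2}\cdots\vec B_0\,\vec c_{n-1}=0$. So the kernel is parametrized by $\vec c_{n-1}\in\ker(\vec B_{n-2}\cdots\vec B_0)$, which has dimension $\ell$.

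A small point remains: the vector-valued version of Lemma~\ref{lem:SpectMapMoments}. It follows by applying the matrix version to $\vec P=[\vec p,0,\dots,0]$, or simply by the same computation.
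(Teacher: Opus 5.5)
Your proof is correct. For $d<n-1$ it is essentially the paper's argument: reduce to $\vec p(\vec J)\circ\vec E_1=\vec 0$ and read off the top nonzero block $\vec B_{d-1}\cdots\vec B_0\vec c_d$.

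For $d=n-1$ you take a genuinely different route. The paper proves the two inequalities separately:
\begin{enumerate}
\item For the upper bound, it shows that the leading coefficients of independent null polynomials are themselves independent and lie in $\ker(\vec B_{n-2}\cdots\vec B_0)$.
\item For the lower bound, it exhibits the null polynomials $\vec Q_{n-1}\vec v$ with $\vec v\in\ker\vec B_{n-2}$. This uses the identity $\langle\vec Q_{n-1},\vec Q_{n-1}\rangle_\bmu=\vec B_{n-2}^*\vec B_{n-2}$ obtained from Lemma~\ref{lem:OpsFromBanded}.
\end{enumerate}
You instead identify $\mathcal N_d$ with $\ker\vec K_d$ for the block Krylov matrix $\vec K_d$ and apply rank--nullity. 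The lower bound then becomes automatic, since a linear map $\mathbb C^{kn}\to\mathbb C^N$ has kernel of dimension at least $kn-N=\ell$. The upper bound becomes the block-triangular rank count.

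What your version buys:
\begin{enumerate}
\item It is shorter and self-contained.
\item It makes explicit that $\mathcal N_d$ is a linear subspace, which the paper uses tacitly when it speaks of its dimension.
\item Writing $\vec J=\vec U\vec X\vec U^*$, with $\vec U$ the unitary eigenvector matrix ordered as in $\vec X$, gives $\vec K_d=\vec U\,\vec M_d(\vec V,\vec X)$. So your rank computation yields statement~\ref{item:Nd-meas} of Theorem~\ref{thm:NdEquiv} directly.
\end{enumerate}

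What the paper's version buys: it describes the null space in orthogonal-polynomial terms, and its identity for $\langle\vec Q_{n-1},\vec Q_{n-1}\rangle_\bmu$ is reused in the proof of Theorem~\ref{thm:NdEquiv}.

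The two descriptions agree. $\vec Q_{n-1}\vec v$ has leading coefficient $\vec B_0^{-1}\cdots\vec B_{n-3}^{-1}\vec v\in\ker(\vec B_{n-2}\cdots\vec B_0)$. Since your back-substitution determines the lower coefficients uniquely from $\vec c_{n-1}$, these are exactly the elements you parametrize.
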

\begin{proof}
    Suppose first that $d < n - 1$, and let $\vec p(x) = \sum_{j=0}^d \vec c_j x^j$ with $\vec c_d \neq \vec 0$. Assume $\langle \vec p, \vec p \rangle_\bmu = 0$, then by Lemma~\ref{lem:SpectMapMoments} we have $(\vec p(\vec J)\circ\vec E_1)^*(\vec p(\vec J)\circ\vec E_1) = 0$, which implies
    \begin{align}\label{eq:ContradicEqForDef}
        \vec J^d \vec E_1 \vec c_d 
        + \sum_{j=0}^{d-1} \vec J^j \vec E_1 \vec c_j = \vec 0.
    \end{align}
    Multiplying \eqref{eq:ContradicEqForDef} on the left by $\vec E_{d+1}^*$ and using Lemma~\ref{lem:PowersJ}, we get $\vec B_{p-1}\dots\vec B_0 \vec c_d = \vec 0$, which contradicts the fact that $\vec B_{p-1}\dots\vec B_0$ is nonsingular.

    Next, assume $\dim \mathcal{N}_{n-1} = r > \ell$. Then there exist linearly independent polynomials $\{ \vec p_i(x) \}_{i=1}^r$ such that
    \begin{align*}
        \vec p_i(x) = \sum_{j=1}^{n-1} \vec c_j^{(i)} x^j \in \mathbb C^k,\quad \vec c_{n-1}^{(i)} \neq \vec 0, \quad   \langle \vec p_i, \vec p_i \rangle_\bmu = 0.
    \end{align*}
    We note that $\langle\vec p_i,\vec p_i \rangle_\bmu = 0$ is equivalent to
    \begin{align}\label{eq:ZeroNormEquiv}
        \vec V_j^* \vec p_i(x_j) = \vec 0 \qquad  \text{for } j=1,\dots,m,
    \end{align}
    where $\bmu = \sum_{j=1}^m \vec V_j \vec V_j^* \delta_{x_j}$.
    We also claim that the leading coefficients $\{\vec c_{n-1}^{(i)}\}_{i=1}^r$ are linearly independent. Indeed, if there exists nonzero constants $\alpha_1,\dots,\alpha_r$ such that $\sum_{i=1}^r \alpha_i \vec c_{n-1}^{(i)} = \vec 0$, then the polynomial $\vec p(x) = \sum_i\alpha_i \vec p_i(x)$ is of degree $n-2$ and satisfies $\langle \vec p,\vec p \rangle_\bmu = 0$ by \eqref{eq:ZeroNormEquiv}, contradicting $\mathrm{dim}~ \mathcal{N}_{n-2}=0$ because $\vec p \neq \vec 0$ by linear independence. Now, using Lemmas~\ref{lem:PowersJ} and \ref{lem:SpectMapMoments} again, we have $\vec B_{n-2}\dots\vec B_0 \vec c_{n-1}^{(i)} = \vec 0$ for $i = 1,\dots,r$. Since $\vec B_{n-1}\dots\vec B_0$ is nonsingular, we find $\dim \ker(\vec B_{n-2}) = r > \ell$, which contradicts the assumption that $\vec B_{n-2} \in \mathbb{C}^{(k-\ell)\times k}$ has full rank, and therefore $r \leq \ell$.

    To establish $r=\ell$, consider the polynomial $\vec Q_{n-1}$ defined in Lemma~\ref{lem:OpsFromBanded} and note that 
    {\small
    \begin{align}\label{eq:RankPTilde}
        \langle \vec Q_{n-1},\vec Q_{n-1} \rangle_\bmu &= \langle \diamond\vec P_{n-2}(\diamond)-\vec P_{n-2}\vec A_{n-2}-\vec P_{n-3}\vec B_{n-3}^*, \diamond\vec P_{n-2}(\diamond)-\vec P_{n-2}\vec A_{n-2}-\vec P_{n-3}\vec B_{n-3}^* \rangle_\bmu = \vec B_{n-2}^*\vec B_{n-2},
    \end{align}
    }
    where the second equality follows from Lemma~\ref{lem:OpsFromBanded} and Lemma~\ref{lem:SpectMapMoments}.
    Hence $\mathrm{rank}~\langle \vec Q_{n-1}, \vec Q_{n-1} \rangle_\bmu = k-\ell$, and there exists linearly independent vectors
    $\{\vec v_i\}_{i=1}^{\ell}$ such that
    \begin{align*}
        \langle \vec p_i, \vec p_i \rangle = 0,
        \qquad \vec p_i(x) := \vec Q_{n-1}(x)\vec v_i, \quad \text{for } i=1,\dots \ell.
    \end{align*}
    Since the leading coefficient of $\vec Q_{n-1}$, given by $\vec B_0^{-1}\cdots \vec B_{n-3}^{-1}$, is nonsingular, the vector polynomials 
    $\{\vec p_i\}_{i=1}^\ell$ are linearly independent and have nonzero leading coefficients.  
    We conclude that $\{\vec p_i\}_{i=1}^\ell \subset \mathcal{N}_{n-1}$ which implies $\mathrm{dim} ~ \mathcal{N}_{n-1} \geq \ell$, and therefore $\mathrm{dim} ~ \mathcal{N}_{n-1} = \ell$.
\end{proof}

Theorem~\ref{thm:Nd} admits an equivalent formulation in terms of matrix polynomials and the structure of the spectral measure, as presented in Theorem~\ref{thm:NdEquiv}.
Statement~\ref{item:Nd-rank} in Theorem~\ref{thm:NdEquiv} shows that when $N = kn - \ell$, the norms of the matrix orthogonal polynomials are of full rank for all polynomials of degree up to $n-2$, and of rank $k - \ell$ for those of degree $n-1$.
Statement~\ref{item:Nd-meas} describes the conditions on the support and weights of a measure required for the existence of such matrix orthogonal polynomials.

\begin{theorem}\label{thm:NdEquiv}
    Let $\vec J \in \mathcal{J}_{k,N}$ with $N = kn - \ell$ and $0 \leq \ell < k$, and let $\bmu = \varphi(\vec J)=\sum_{j=1}^m \vec V_j\vec V_j^* \delta_{x_j}$ be the associated spectral measure, as defined in \eqref{eq:SpectMapDef2}. The following statements are equivalent:
    \begin{enumerate}[leftmargin=*, label=(\arabic*)]
    \item\label{item:Nd-dim} $\mathcal{N}_d$ defined in \eqref{eq:NdDef} satisfies \eqref{eq:NdDimEq}.
    \item\label{item:Nd-rank} 
    For every monic matrix polynomial $\vec \Pi$ of degree $d$, we have
    \begin{align*}
      \mathrm{rank}~\langle \vec \Pi,\vec \Pi\rangle_{\bmu} \geq
      \begin{cases}
        k, & d < n-1,\\
        k-\ell, & d = n-1,
      \end{cases}
    \end{align*}
    and, in particular,
    \begin{align*}
      \mathrm{rank}~\langle \vec \Pi_{n-1}, \vec \Pi_{n-1}\rangle_\bmu = k-\ell
    \end{align*}
    where $\vec\Pi_{n-1}=\vec \Pi_{n-1}(\diamond,\bmu)$ denotes the monic orthogonal polynomial of degree $n-1$.
    \item\label{item:Nd-meas} Let 
        \begin{align*}
            \vec X = \mathrm{diag}(\underbrace{x_1,\dots,x_1}_{n_1},\underbrace{x_2,\dots,x_2}_{n_2},\ldots,\underbrace{x_m,\dots,x_m}_{n_m}), \quad \vec V^* = \begin{bmatrix} \vec V_1 & \cdots & \vec V_m \end{bmatrix}, \quad \vec V \in \mathbb C^{N \times k},
        \end{align*}
        where $n_j = \mathrm{rank}~\vec V_j$, then the matrix
        \begin{align*}
            \vec M_d(\vec V, \vec X) := \begin{bmatrix} \vec V  & \vec X \vec V &  \cdots &  \vec X^d \vec V \end{bmatrix} \in \mathbb C^{N\times (d+1)k}
        \end{align*}
        has full rank, that is,
        \begin{align*}
        \mathrm{rank}~\vec M_d(\vec V,\vec X) = \min\{(d+1) k, N\}.
        \end{align*}
    \end{enumerate}
\end{theorem}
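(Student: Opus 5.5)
We will prove the cycle (1)$\Leftrightarrow$(3) and (1)$\Leftrightarrow$(2). All three are really statements about the kernel of one linear map, the ``evaluation'' map from vector polynomials to $\mathbb C^N$. For $\vec p(x)=\sum_{i=0}^d \vec c_i x^i$ with $\vec c_i\in\mathbb C^k$, set $\vec c=[\vec c_0^T,\dots,\vec c_d^T]^T$. From $\bmu=\sum_j \vec V_j\vec V_j^*\delta_{x_j}$ we get
\begin{align*}
\langle \vec p,\vec p\rangle_\bmu=\sum_j \|\vec V_j^*\vec p(x_j)\|^2 .
\end{align*}
We take $\vec V_j$ to be a $k\times n_j$ factor with $\vec V_j\vec V_j^*$ unchanged; this is legitimate because the measure depends only on $\vec V_j\vec V_j^*$, and Lemma~\ref{lem:LinIndep} gives $\mathrm{rank}\,\vec V_j=n_j$. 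Stacking these vectors gives exactly $\vec M_d(\vec V,\vec X)\vec c$, since the block of rows belonging to $x_j$ in $\vec X^i\vec V$ is $x_j^i\vec V_j^*$. Hence
\begin{align*}
\langle \vec p,\vec p\rangle_\bmu=\|\vec M_d(\vec V,\vec X)\vec c\|^2,
\end{align*}
so $\mathcal N_d\cong\ker \vec M_d(\vec V,\vec X)$ and $\dim\mathcal N_d=(d+1)k-\mathrm{rank}\,\vec M_d$.

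For (1)$\Leftrightarrow$(3): when $d<n-1$ we have $(d+1)k\le (n-1)k\le N$, so full rank $\min\{(d+1)k,N\}$ means $\dim\mathcal N_d=0$. When $d=n-1$, $\min=N=nk-\ell$, so full rank means $\dim\mathcal N_{n-1}=\ell$. Statement (3) is also meant for $d\ge n$, and there we need more care. Observe that $\vec M_d$ has $N$ rows and its column space contains that of $\vec M_{n-1}$. So rank $N$ at $d=n-1$ propagates to all $d\ge n-1$, and (1) gives (3) for every $d$. Conversely, (3) at $d\le n-1$ gives (1) directly.

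For (1)$\Leftrightarrow$(2): write a monic $\vec\Pi$ of degree $d$ columnwise. Then $\ker\langle\vec\Pi,\vec\Pi\rangle_\bmu=\{\vec v:\vec\Pi\vec v\in\mathcal N_d\}$. The map $\vec v\mapsto\vec\Pi\vec v$ is injective, because the leading coefficient of $\vec\Pi$ is $\vec I_k$. Therefore
\begin{align*}
\mathrm{rank}\langle\vec\Pi,\vec\Pi\rangle_\bmu\ge k-\dim\mathcal N_d,
\end{align*}
which gives the inequalities in (2) from (1). For the equality at the monic orthogonal polynomial $\vec\Pi_{n-1}$ we invoke Theorem~\ref{thm:(n-1)Pol}'s setup, or more directly the computation in the proof of Theorem~\ref{thm:Nd}. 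There $\vec Q_{n-1}$ has nonsingular leading coefficient, and $\langle\vec Q_{n-1},\vec Q_{n-1}\rangle_\bmu=\vec B_{n-2}^*\vec B_{n-2}$ has rank $k-\ell$. The polynomial $\vec Q_{n-1}$ is orthogonal to all lower-degree polynomials, by \eqref{eq:OpsFromBandedProp} and Lemma~\ref{lem:SpectMapMoments}. So by uniqueness (Theorem~\ref{Thm:DefiniteProp}, using $n$-definiteness from (1) with $d\le n-2$), $\vec\Pi_{n-1}=\vec Q_{n-1}\vec C$ for an invertible $\vec C$. Its Gram matrix $\vec C^*\vec B_{n-2}^*\vec B_{n-2}\vec C$ then has rank $k-\ell$.

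For (2)$\Rightarrow$(1): if some nonzero $\vec p\in\mathcal N_d$ with $d<n-1$ existed, we would lower its degree if necessary to a genuine degree $d'$ with leading vector $\vec c\ne0$. We would then build a monic $\vec\Pi$ of degree $d'$ having $\vec p$ as a column combination. To do this, choose $\vec\Pi=x^{d'}\vec I_k+\dots$ whose lower columns are arranged so that $\vec\Pi\vec c=\vec p$, which is possible by adjusting lower coefficients along $\vec c$. This forces rank $<k$, contradicting (2).

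At $d=n-1$ we must show $\dim\mathcal N_{n-1}=\ell$. Elements of $\mathcal N_{n-1}$ have linearly independent leading coefficients; the argument is as in Theorem~\ref{thm:Nd}, using $\mathcal N_{n-2}=0$. Hence each is determined modulo lower-degree terms, and orthogonality to lower degree lets us write it as $\vec\Pi_{n-1}\vec v$. This identifies $\mathcal N_{n-1}\cong\ker\bgamma_{n-1}$, of dimension $\ell$.

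The main obstacle we expect is precisely this last identification: showing that every null polynomial of degree $n-1$ is of the form $\vec\Pi_{n-1}\vec v$. The plan is to use Lemma~\ref{lem:low_deg_expansion} to expand it in the $\vec\Pi_j$. The zero norm forces the components along $\vec\Pi_0,\dots,\vec\Pi_{n-2}$ to vanish, because the quasi-inner product is positive semidefinite, so a vector of zero norm is orthogonal to everything (Cauchy–Schwarz), and the $\bgamma_j$ for $j\le n-2$ are invertible.
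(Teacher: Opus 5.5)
Your proposal is correct and follows essentially the same route as the paper. You prove (1)$\Leftrightarrow$(3) through $\mathcal N_d\cong\ker\vec M_d(\vec V,\vec X)$. You prove (1)$\Leftrightarrow$(2) by passing between null vector polynomials and kernel vectors of $\langle\vec\Pi,\vec\Pi\rangle_\bmu$, reading off $\mathrm{rank}\,\bgamma_{n-1}$ from $\langle\vec Q_{n-1},\vec Q_{n-1}\rangle_\bmu=\vec B_{n-2}^*\vec B_{n-2}$.

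The only real variations are two. First, you get $\dim\mathcal N_{n-1}\le\ell$ in (2)$\Rightarrow$(1) by directly identifying $\mathcal N_{n-1}=\{\vec\Pi_{n-1}\vec v:\vec v\in\ker\bgamma_{n-1}\}$ via expansion in the $\vec\Pi_j$. The paper instead argues by contradiction, building a monic $\vec\Pi$ with $r>\ell$ null columns. Second, you treat $d\ge n$ in (3) explicitly, which the paper leaves implicit.
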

\begin{proof}
    The proof proceeds in two steps. We first show that \ref{item:Nd-dim} and \ref{item:Nd-rank} are equivalent, and then establish the equivalence between \ref{item:Nd-dim} and \ref{item:Nd-meas}.
    
    \medskip
    \noindent\underline{Step 1: Equivalence of \ref{item:Nd-dim} and \ref{item:Nd-rank}.}
    First, suppose that \ref{item:Nd-dim} holds. If $\vec \Pi$ is a monic polynomial of degree $d$, then $\mathrm{rank}\,\langle \vec \Pi, \vec \Pi \rangle_\bmu = k$ for $d < n-1$; otherwise, there would exist a nonzero vector $\vec v$ such that $\vec p(x) = \vec \Pi(x)\vec v$ satisfies $\langle \vec p, \vec p \rangle_\bmu = 0$, contradicting \ref{item:Nd-dim}. Next, assume that $\mathrm{rank}\,\langle \vec \Pi, \vec \Pi \rangle_\bmu = k - r$ for $d = n-1$ with $r > \ell$, then there exists linearly independent vectors $\{\vec v_i\}_{i=1}^r$ such that
    \begin{align*}
        \langle \vec p_i, \vec p_i \rangle_\bmu = 0, \quad
        \vec p_i(x) := \vec \Pi(x)\vec v_i, \quad i = 1,\dots,r.
    \end{align*}
    Since $\vec \Pi$ is monic, each $\vec p_i$ is a vector polynomial of degree $n-1$ and the set $\{\vec p_i\}_{i=1}^r$ is linearly independent, implying $\dim \mathcal{N}_{n-1} = r \geq \ell$, again contradicting \ref{item:Nd-dim}. 
    To show that the rank condition holds for $\vec \Pi_{n-1}$, consider the polynomial $\vec Q_{n-1}$ defined in Lemma~\ref{lem:OpsFromBanded}. Since $\vec \Pi_{n-1}(x) = \vec Q_{n-1}(x)\vec B_{n-3}\cdots\vec B_{0}$, we have
    \begin{align*}
        \mathrm{rank}~\langle \vec \Pi_{n-1}, \vec \Pi_{n-1} \rangle_\bmu
        = \mathrm{rank}~\langle \vec Q_{n-1}, \vec Q_{n-1} \rangle_\bmu
        = k - \ell,
    \end{align*}
    where the second equality follows from \eqref{eq:RankPTilde}.  

    Now suppose that \ref{item:Nd-rank} holds, and assume that there exists a non-trivial polynomial $\vec p\in \mathbb C^k$ of degree $d$ with $d<n-1$ such that $\langle \vec p, \vec p \rangle_\bmu = 0$. Let $\vec P \in\mathbb C^{k\times k}$ be a matrix polynomial of degree $d$ whose leading coefficient $\vec C_d$ is invertible and whose first column is $\vec p$. Define $\vec \Pi = \vec P \vec C_d^{-1}$ and $\vec v = \vec C_d\vec e_1$, then
    \begin{align*}
        \vec v^* \langle \vec \Pi, \vec \Pi \rangle_\bmu \vec v = \langle \vec p, \vec p \rangle_\bmu = 0.
    \end{align*}
    Hence $\langle \vec \Pi, \vec \Pi \rangle_\bmu$ is rank deficient, which leads to a contradiction. Since $\mathrm{rank}\,\langle \vec \Pi_{n-1}, \vec \Pi_{n-1} \rangle_\bmu = k - \ell$, we have $\dim \mathcal{N}_{n-1} \ge \ell$, and it remains to prove that $\dim \mathcal{N}_{n-1} \le \ell$. If this were false, there would exist $r > \ell$ linearly independent vector polynomials $\{\vec p_i\}_{i=1}^r$ of degree $n-1$ such that $\langle \vec p_i, \vec p_i \rangle_\bmu = 0$ for $i = 1, \dots, r$. Since $\mathrm{dim}~\mathcal{N}_{n-2}=0$, the leading coefficients of $\{\vec p_i\}_{i=1}^r$ are linearly independent. As a result, there exists a matrix polynomial $\vec P$ of degree $d$ whose leading coefficient $\vec C_{n-1}$ is invertible and whose first $r$ columns are given by $\vec p_1,\dots, \vec p_r$. Setting $\vec \Pi = \vec P \vec C_d^{-1}$ and $\vec v_i = \vec C_{n-1} \vec e_i$ for $i=1,\dots,r$, we find
    \begin{align*}
        \vec v_i^* \langle \vec \Pi, \vec \Pi \rangle_\bmu \vec v_i 
        = \langle \vec p_i, \vec p_i \rangle_\bmu 
        = 0,
        \qquad i = 1, \dots, r,
    \end{align*}
    so $\mathrm{rank}~\langle \vec \Pi, \vec \Pi \rangle_\bmu<k-\ell$, contradicting \ref{item:Nd-rank}.

    \medskip
    \noindent\underline{Step 2: Equivalence of \ref{item:Nd-dim} and \ref{item:Nd-meas}.}
    For a polynomial $\vec p(x) = \sum_{j=0}^{d} \vec c_j x^j$, we have
    \begin{align*}
        \begin{bmatrix}
            \vec V_1^* \vec p(x_1) \\ \vec V_2^* \vec p(x_2) \\ \vdots \\ \vec V_m^* \vec p(x_m)
        \end{bmatrix} = \begin{bmatrix}
            \vec V_1^*  & x_1 \vec V_1^* & \cdots & x_1^{d} \vec V_1^* \\ \vec V_2^*  & x_2 \vec V_2^* & \cdots & x_2^{d} \vec V_2^* \\ \vdots & \vdots && \vdots \\
            \vec V_m^* & x_m \vec V_m^* & \cdots & x_m^{d} \vec V_m^* 
        \end{bmatrix}\begin{bmatrix} \vec c_0 \\ \vec c_1 \\ \vdots  \\ \vec c_d \end{bmatrix} = \vec M_d(\vec V, \vec X) \begin{bmatrix} \vec c_0 \\ \vec c_1 \\ \vdots  \\ \vec c_d \end{bmatrix},
    \end{align*}
    which implies $\mathrm{dim}~\mathcal{N}_d = \mathrm{dim}~\mathrm{ker} ~\vec M_d(\vec V,\vec X)$. Thus, the matrix $\vec M_d(\vec V,\vec X)$ has full rank if and only if \ref{item:Nd-dim} is satisfied, completing our proof.
\end{proof}

\begin{remark}
    For the spectral measure $\bmu$ defined in \eqref{eq:SpectMapDef2}, the matrices $\vec V_j$ are defined only up to right multiplication by a unitary matrix. However, for any unitary matrices $\vec Q_j$, setting $\vec{\tilde V}_j = \vec V_j \vec Q_j$ gives
    \begin{align*}
        \vec M_d(\vec{\tilde V}, \vec X) 
        = \mathrm{diag}(\vec Q_1^*,\dots,\vec Q_m^*)\, \vec M_d(\vec V, \vec X).
    \end{align*}
    Therefore, if the condition in Theorem~\ref{thm:NdEquiv}~\ref{item:Nd-meas} is satisfied for one choice $\{\vec V_j\}_{j=1}^m$, it is satisfied for any other choice $\{\vec{\tilde V}_j\}_{j=1}^m$.
\end{remark}

The results concerning the null space $\mathcal{N}_d$ and the rank of matrix polynomials for higher degrees can be easily obtained once the conditions in Theorem~\ref{thm:NdEquiv} are satisfied, as stated in the next corollary.

\begin{corollary}\label{cor:ExtRes}
    Let $\bmu$ be a spectral measure associated with a matrix in $\JkN$ with $N = kn-\ell$.
    For every $d \ge n$, there exists a monic matrix polynomial $\vec{\Pi}$ of degree $d$ such that $\langle \vec{\Pi}, \vec{\Pi} \rangle_{\bmu} = \vec{0}$. As a result, we have
    \begin{align*}
        \mathrm{dim}~\mathcal{N}_d = k(d+1)-N, \qquad \text{and} \qquad \langle \vec \Pi_d,\vec \Pi_d \rangle_\bmu = \vec 0,
    \end{align*}
    where $\vec \Pi_d=\vec \Pi_d(\diamond,\bmu)$ is any monic polynomial of degree $d$ satisfying $\langle \vec \Pi_d,\vec P\rangle_\bmu =\vec 0$ for all matrix polynomials $\vec P$ of degree $p<d$.
\end{corollary}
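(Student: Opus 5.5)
The plan is to work directly with the finite-dimensional structure of $\bmu=\varphi(\vec J)$, using the matrix $\vec M_d(\vec V,\vec X)$ from Theorem~\ref{thm:NdEquiv}. Step 2 of that proof showed that
\[
\mathcal N_d=\{\vec p:\ [\vec c_0;\dots;\vec c_d]\in\ker \vec M_d(\vec V,\vec X)\}.
\]
Here $\vec M_d(\vec V,\vec X)$ has $N$ rows and $(d+1)k$ columns. For $d\ge n$ we have $(d+1)k\ge (n+1)k>N$. So the first task is to compute $\mathrm{rank}\,\vec M_d$ exactly, and I claim it equals $N$ for every $d\ge n-1$. Indeed, $\vec M_{n-1}$ is a column-submatrix of $\vec M_d$, and Theorem~\ref{thm:NdEquiv} gives $\mathrm{rank}\,\vec M_{n-1}=\min\{nk,N\}=N$ (note $nk\ge N$). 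Hence $\mathrm{rank}\,\vec M_d=N$, and rank–nullity gives $\dim\mathcal N_d=(d+1)k-N$.

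Next I would produce the monic polynomial $\vec\Pi$ of degree $d$ with $\langle\vec\Pi,\vec\Pi\rangle_\bmu=\vec 0$. The vanishing of $\langle \vec\Pi,\vec\Pi\rangle_\bmu$ is equivalent to $\vec V_j^*\vec\Pi(x_j)=\vec 0$ for all $j$, as in \eqref{eq:ZeroNormEquiv}, column by column. The simplest choice is $\vec\Pi(x)=q(x)\vec I_k$ with $q(x)=x^{d-m}\prod_{j=1}^m(x-x_j)$. This is monic of degree $d$ provided $m\le d$, and it vanishes on $\supp\bmu$. Since $\sum_j n_j=N$ and $n_j\le k$ by Lemma~\ref{lem:LinIndep}, we get $m\ge N/k$, but we need the upper bound $m\le d$. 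I would obtain it from the minimal polynomial of $\vec J$. Alternatively, since $\vec M_{n}$ has a kernel of dimension $(n+1)k-N\ge k+\ell\ge k$, I would avoid counting $m$ altogether and build $\vec\Pi$ from the kernel.

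The kernel-based construction goes as follows. By the kernel dimension count, the map from $\mathcal N_n$ to leading coefficients $\vec c_n$ has kernel $\mathcal N_{n-1}$, of dimension $\ell$. Its image therefore has dimension $(n+1)k-N-\ell=k$, so every $\vec c_n\in\mathbb C^k$ occurs. Choosing preimages of $\vec e_1,\dots,\vec e_k$ gives columns of a monic $\vec\Pi$ of degree $n$ with zero norm. For $d>n$ I would take $x^{d-n}\vec\Pi(x)$, which still vanishes in the sense of \eqref{eq:ZeroNormEquiv}. Establishing that this leading-coefficient map is surjective is the main (mild) obstacle, and the dimension count above settles it.

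Finally, I would treat the orthogonal $\vec\Pi_d$. Let $\vec\Pi_d$ be monic of degree $d\ge n$ with $\langle\vec\Pi_d,\vec P\rangle_\bmu=\vec 0$ for $\deg\vec P<d$, and let $\vec\Pi$ be the zero-norm monic polynomial constructed above. Then $\vec\Pi_d-\vec\Pi$ has degree less than $d$, so
\[
\langle\vec\Pi_d,\vec\Pi_d\rangle_\bmu=\langle\vec\Pi_d,\vec\Pi\rangle_\bmu .
\]
By Cauchy–Schwarz for the positive semidefinite form, or directly because $\vec V_j^*\vec\Pi(x_j)=\vec 0$ for every $j$, we have $\langle\cdot,\vec\Pi\rangle_\bmu=\vec 0$. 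This gives $\langle\vec\Pi_d,\vec\Pi_d\rangle_\bmu=\vec 0$.
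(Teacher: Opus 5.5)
Your proof is correct. It takes a somewhat different route from the paper's, although both arguments rest on the single fact $\mathrm{rank}\,\vec M_{n-1}(\vec V,\vec X)=N$.

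The paper builds $\vec\Pi$ first, separately for each $d\ge n$. Since $\vec M_{d-1}$ has full row rank, it solves $\vec M_{d-1}[\vec C_0;\dots;\vec C_{d-1}]=-[\vec V_j^*x_j^d]_j$ with a right inverse. Only afterwards does it get the dimension:
\begin{itemize}
\item it bounds $\dim\mathcal N_d\le \ell+k(d-n+1)$ by allowing at most $k$ new dimensions per degree;
\item it then uses the columns of $\vec\Pi$ for the reverse inequality. This step is implicitly an induction on $d$, because those $k$ columns are independent only modulo $\mathcal N_{d-1}$.
\end{itemize}
You reverse the order. Since $\vec M_{n-1}$ is a column submatrix of $\vec M_d$, you get $\mathrm{rank}\,\vec M_d=N$, and rank--nullity gives $\dim\mathcal N_d=(d+1)k-N$ in one line. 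Existence of $\vec\Pi$ then follows from surjectivity of the leading-coefficient map $\mathcal N_n\to\mathbb C^k$: its kernel is $\mathcal N_{n-1}$, of dimension $\ell$, so its image has dimension $k$. Multiplying by $x^{d-n}$ handles $d>n$.

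Your version makes the dimension formula cleaner and fully explicit. The paper's version gives a direct formula for a zero-norm monic polynomial in every degree. Your last step notes that $\langle\cdot,\vec\Pi\rangle_\bmu\equiv\vec 0$, so $\langle\vec\Pi_d,\vec\Pi_d\rangle_\bmu=\langle\vec\Pi_d,\vec\Pi\rangle_\bmu=\vec 0$. This is a slightly shorter form of the paper's computation via $\langle\vec\Pi_d-\vec\Pi,\vec\Pi_d-\vec\Pi\rangle_\bmu$.

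You should drop one aside. You cannot obtain $m\le d$ from the minimal polynomial of $\vec J$. For Hermitian $\vec J$ the minimal polynomial has degree exactly $m$, the number of distinct eigenvalues, and $m$ can be as large as $N$. For example, take $k=2$, $N=4$, $\vec A_0=\vec A_1=\mathrm{diag}(0,10)$, $\vec B_0=\vec I_2$. This matrix lies in $\mathcal J_{2,4}$ with $n=2$ and has eigenvalues $\pm1,9,11$. Yet $d$ may be as small as $n=\lceil N/k\rceil$. So the scalar choice $q(x)\vec I_k$ is generally unavailable, and your kernel-based construction is genuinely needed.
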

\begin{proof}
    Theorems~\ref{thm:Nd} and Theorem~\ref{thm:NdEquiv}\,\ref{item:Nd-meas} guarantee that, for $d \geq n$, the matrix $\vec{M}_{d-1}(\vec{V}, \vec{X})$ has linearly independent rows and therefore admits a right inverse $\vec{M}_{d-1}^{\dagger}(\vec{V}, \vec{X})$. Define $\{\vec C_j\}_{j=0}^{d-1}$ such that $\vec C_j \in \mathbb C^{k\times k}$ and
    \begin{align*}
        \begin{bmatrix}
            \vec C_0 \\
            \vec C_1\\
            \vdots \\
            \vec C_{d-1}
        \end{bmatrix} = \vec M_{d-1}^\dagger(\vec V,\vec X) \begin{bmatrix}
            -\vec V_1^* x_1^d \\
            -\vec V_2^* x_2^d \\
            \vdots \\
            -\vec V_m^* x_m^d
        \end{bmatrix},
    \end{align*}
    and consider $\vec \Pi(x) = \vec I_k x^d+\sum_{j=0}^{d-1}\vec C_j x^j$. By construction, we have 
    \begin{align*}
        \begin{bmatrix}
            \vec V_1^* \vec \Pi(x_1)\\
            \vec V_2^* \vec \Pi(x_2)\\
            \vdots\\
            \vec V_m^* \vec \Pi(x_m)
        \end{bmatrix} = \vec M_d(\vec V,\vec X) \begin{bmatrix}
            \vec C_0\\
            \vdots \\
            \vec C_{d-1}\\ 
            \vec I_k
        \end{bmatrix} = \vec M_{d-1}(\vec V,\vec X) \begin{bmatrix}
            \vec C_0 \\
            \vec C_1 \\
            \vdots \\
            \vec C_{d-1}
        \end{bmatrix} + \begin{bmatrix}
            \vec V_1^* x_1^d \\
            \vec V_2^* x_2^d \\
            \vdots \\
            \vec V_m^* x_m^d
        \end{bmatrix},
    \end{align*}
    and therefore
    \begin{align}\label{eq:ZeroCond}
        \vec V_j^* \vec \Pi(x_j) = \vec 0 \quad \text{for}~ j=1,\dots,m,
    \end{align}
    which implies $\langle \vec \Pi, \vec \Pi \rangle_\bmu = \vec 0$.

    Since $\dim \mathcal{N}_{n-1} = \ell$ and each degree increment gives at most $k$ dimensions, we have
    \begin{align*}
        \mathrm{dim}~\mathcal{N}_d \leq \ell+k(d-n+1) = k(d+1)-N.
    \end{align*}
    The columns of $\vec{\Pi}$ are linearly independent elements of $\mathcal{N}_d$ for all $d \ge n$, so the above inequality is in fact an equality. Finally, since $\vec{\Pi}_d$ is a monic orthogonal polynomial of degree $d$, the difference $\vec{\Pi}_d - \vec{\Pi}$ has degree at most $d-1$. Using Lemma~\ref{lem:low_deg_expansion}, together with the orthogonality of $\vec \Pi_d$ and \eqref{eq:ZeroCond}, we have
    \begin{align*}
        \langle \vec \Pi_n-\vec{ \Pi},\vec \Pi_n-\vec{ \Pi} \rangle_\bmu = \langle \vec \Pi_n ,\vec \Pi_n-\vec{ \Pi} \rangle_\bmu - \langle \vec{\Pi},\vec \Pi_n-\vec{ \Pi}\rangle_\bmu = \vec 0.
    \end{align*}
    This implies that $\vec V_j^* \left(\vec \Pi_n(x_j)-\vec{ \Pi}(x_j)\right) = \vec 0$ for $j=1,\dots,m$, and hence $\langle \vec \Pi_n, \vec \Pi_n \rangle_\bmu = \vec 0$.
\end{proof}

In the remainder of this section, we show that each spectral measure corresponds to a unique matrix $\vec J$ within the class $\JkN$. Hence, the spectral map is invertible and the explicit construction of its inverse will be addressed in the next section.

\begin{theorem}\label{thm:Injective}
    The spectral map $\varphi: \JkN \to \mathcal{P}_k(\mathbb R)$, defined in \eqref{eq:SpectMapDef1}, is injective.
\end{theorem}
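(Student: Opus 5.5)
We show that the blocks of $\vec J$ can be read off from $\bmu=\varphi(\vec J)$ alone, by identifying them with the recurrence coefficients of the orthonormal polynomials of $\bmu$. Take $\vec J,\tilde{\vec J}\in\JkN$ with $\varphi(\vec J)=\varphi(\tilde{\vec J})=\bmu$. By Theorem~\ref{thm:Nd}, $\bmu$ is $n$-definite. By Theorem~\ref{thm:NdEquiv}, $\mathrm{rank}\,\bgamma_{n-1}(\bmu)=k-\ell$. Hence the normalized family $\{\vec P_j(\diamond,\bmu)\}_{j=0}^{n-2}$ of Lemma~\ref{lem:OpsUniqNormalization} and the polynomial $\vec P_{n-1}(\diamond,\bmu)$ of Theorem~\ref{thm:(n-1)Pol} are well defined. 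They depend only on $\bmu$, and so do the coefficients $\vec A_j(\bmu)$, $j\le n-1$, and $\vec B_j(\bmu)$, $j\le n-2$. It therefore suffices to prove
\[
\vec A_j(\vec J)=\vec A_j(\bmu), \qquad \vec B_j(\vec J)=\vec B_j(\bmu),
\]
since the same identities then hold for $\tilde{\vec J}$.

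For this we use the polynomials $\vec P_j$ built from the blocks of $\vec J$ in Lemma~\ref{lem:OpsFromBanded}; call them $\vec P_j^{\vec J}$. By \eqref{eq:OpsFromBandedProp} and Lemma~\ref{lem:SpectMapMoments},
\[
\langle \vec P_i^{\vec J},\vec P_j^{\vec J}\rangle_\bmu=\vec E_{i+1}^*\vec E_{j+1}=\delta_{ij}\vec I ,
\]
with the last block of size $k-\ell$. Likewise
\[
\langle \vec P_i^{\vec J},\diamond\vec P_j^{\vec J}(\diamond)\rangle_\bmu=\vec E_{i+1}^*\vec J\vec E_{j+1},
\]
which by \eqref{eq:ERec} is exactly the corresponding block $\vec A_j(\vec J)$ or $\vec B_j(\vec J)$ of $\vec J$. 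So $\{\vec P_j^{\vec J}\}_{j=0}^{n-2}$ is a family of $k\times k$ orthonormal polynomials for $\bmu$, with $\vec P_0^{\vec J}=\vec I_k$ and leading coefficients $(\vec B_{j-1}\cdots\vec B_0)^{-1}$ invertible, by Lemma~\ref{lem:PowersJ}. Its recurrence coefficients $\vec A_j(\vec J)$ are Hermitian, and the $\vec B_j(\vec J)$ are upper triangular with positive diagonal.

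Any orthonormal family arises from the monic one as in \eqref{eq:OpsNormalization} with some unitary $\vec Q_j$. The monic polynomials are unique by Theorem~\ref{Thm:DefiniteProp}, and $\vec P_j^{\vec J}\bgamma_j^{-1/2}$ times its leading coefficient is monic and orthogonal. Lemma~\ref{lem:OpsUniqNormalization} then forces
\[
\vec P_j^{\vec J}=\vec P_j(\diamond,\bmu), \quad j\le n-2 .
\]
Consequently $\vec A_j(\vec J)=\vec A_j(\bmu)$ for $j\le n-2$ and $\vec B_j(\vec J)=\vec B_j(\bmu)$ for $j\le n-3$, the case $j=n-2$ of $\vec A_j$ following from the inner product formula above.

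The main obstacle is the last, degenerate block. Here the polynomial $\vec Q_{n-1}$ of Lemma~\ref{lem:OpsFromBanded} coincides with $\vec P(x,\bmu)$ in \eqref{eq:Pdef}, because its ingredients $\vec P_{n-2}$, $\vec P_{n-3}$, $\vec A_{n-2}$, $\vec B_{n-3}$ have already been identified. By \eqref{eq:RankPTilde},
\[
\langle\vec P,\vec P\rangle_\bmu=\vec B_{n-2}(\vec J)^*\vec B_{n-2}(\vec J),
\]
and $\vec B_{n-2}(\vec J)$ is in row echelon form with positive pivots. The uniqueness in Lemma~\ref{lem:REF} then gives $\vec B_{n-2}(\vec J)=\vec B_{n-2}(\bmu)$. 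It follows that $\vec P_{n-1}^{\vec J}=\vec P\,\vec B_{n-2}^\dagger=\vec P_{n-1}(\diamond,\bmu)$, with the same right inverse on both sides. Finally,
\[
\vec A_{n-1}(\vec J)=\langle\vec P_{n-1},\diamond\vec P_{n-1}(\diamond)\rangle_\bmu=\vec A_{n-1}(\bmu).
\]
All blocks of $\vec J$ and $\tilde{\vec J}$ therefore agree, so $\vec J=\tilde{\vec J}$.
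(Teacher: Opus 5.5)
Your proof is correct and follows essentially the paper's argument. The polynomials of Lemma~\ref{lem:OpsFromBanded} are orthonormal for $\bmu$; uniqueness of the monic orthogonal polynomials together with uniqueness of a triangular normalization (Lemma~\ref{lem:OpsUniqNormalization} for $j\le n-2$, Lemma~\ref{lem:REF} for the degenerate last block) pins them down; and the blocks are then read off as $\langle\vec P_i,\diamond\vec P_j(\diamond)\rangle_\bmu$.

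The only difference is bookkeeping. The paper compares $\vec J$ with $\hat{\vec J}$ via $\vec C_j^*\vec C_j=\hat{\vec C}_j^*\hat{\vec C}_j$, whereas you compare $\vec J$ with the canonical family of Lemma~\ref{lem:OpsUniqNormalization} and Theorem~\ref{thm:(n-1)Pol}, which yields $\psi(\varphi(\vec J))=\vec J$ directly.

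One small slip: the monic polynomial you want is $\vec P_j^{\vec J}\vec B_{j-1}\cdots\vec B_0$, i.e.\ $\vec P_j^{\vec J}$ times the \emph{inverse} of its leading coefficient. It is not ``$\vec P_j^{\vec J}\bgamma_j^{-1/2}$ times its leading coefficient''.
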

\begin{proof}
    Let $\vec J \in \JkN$ and suppose that $\bmu = \varphi(\vec J)$. Consider the sequences of polynomials $\{\vec Q_j\}_{j=0}^{n-1}$ and $\{\vec P_j\}_{j=0}^{n-1}$ associated with $\vec J$ as defined in Lemma~\ref{lem:OpsFromBanded}, then
    \begin{align*}
    \langle \vec P_i, \vec P_j \rangle_\bmu
    = (\vec P_i(\vec J)\circ \vec E_1)^* \, (\vec P_j(\vec J)
    \circ\vec E_1)
    = \vec E_{i+1}^* \vec E_{j+1}
    = \vec I  \delta_{ij},
    \end{align*}
    where the first equality follows from Lemma~\ref{lem:SpectMapMoments} and the second from Lemma~\ref{lem:OpsFromBanded}.
    Thus, $\{\vec P_j\}_{j=0}^{n-1}$ are orthonormal polynomials with respect to $\bmu$. 
    Now let $\vec{\hat J} \in \JkN$ be another banded matrix such that $\varphi(\vec{\hat J}) = \bmu$, and let $\{\vec Q_j\}_{j=0}^{n-1}$ and $\{\vec{\hat P}_j\}_{j=0}^{n-1}$ be the associated polynomials from Lemma~\ref{lem:OpsFromBanded}. By the same argument, $\{\vec{\hat P}_j\}_{j=0}^{n-1}$ also form a set of orthonormal polynomials with respect to $\bmu$.

    We now prove that the two families of polynomials coincide. Let $\{\vec A_j\}_{j=0}^{n-1}$, $\{\vec B_j\}_{j=0}^{n-2}$ and $\{\hat{\vec A}_j\}_{j=0}^{n-1}$, $\{\hat{\vec B}_j\}_{j=0}^{n-2}$ denote the diagonal and off-diagonal blocks of $\vec J$ and $\hat{\vec J}$, respectively, and define
    \begin{align*}
        \vec C_j = \vec B_{j-1}\dots \vec B_0, \quad \hat{\vec C}_j = \hat{\vec B}_{j-1}\dots \hat{\vec B}_0, \quad \text{for }j=0,\dots,n-1.
    \end{align*}
    The recurrence relation in Lemma~\ref{lem:OpsFromBanded} implies that $\vec \Pi_j := \vec Q_j \vec C_{j-1}$ and $\hat{\vec \Pi}_j := \hat{\vec Q}_j \hat{\vec C}_{j-1}$ are monic orthogonal polynomials of degree $j$. By Theorem~\ref{Thm:DefiniteProp} and Theorem~\ref{thm:Nd}, monic orthogonal polynomials of degree at most $n-1$ are unique, so $\vec \Pi_j = \hat{\vec \Pi}_j$ and
    \begin{align}\label{eq:EqMonicNorms}
        \langle \vec \Pi_j,\vec \Pi_j \rangle_\bmu = \langle \hat{\vec \Pi}_j, \hat{\vec \Pi}_j \rangle_\bmu, \quad \text{for }j=0,\dots,n-1.
    \end{align}
    Using Lemma~\ref{lem:OpsFromBanded} and Lemma~\ref{lem:SpectMapMoments}, we have
    \begin{align}\label{eq:QjNorm}
        \langle \vec Q_{j}, \vec Q_{j} \rangle_\bmu = \langle \diamond\vec P_{j-1}(\diamond)-\vec P_{j-1}\vec A_{j-1}-\vec P_{j-2}\vec B_{j-2}^*, \diamond\vec P_{j-1}(\diamond)-\vec P_{j-1}\vec A_{j-1}-\vec P_{j-2}\vec B_{j-2}^* \rangle_\bmu = \vec B_{j-1}^*\vec B_{j-1},
    \end{align}
    and similarly $\langle \hat{\vec Q}_j,\hat{\vec Q}_j \rangle_{\bmu} = \hat{\vec B}_{j-1}^* \hat{\vec B}_{j-1}$, for $j=1,\dots, n-1$.
    Combining \eqref{eq:EqMonicNorms} and \eqref{eq:QjNorm} gives
    \begin{align*}
        \vec C_j^*\vec C_j = \hat{\vec C}_j^*\hat{\vec C}_j, \quad \text{for } j=0,\dots,n-1.
    \end{align*}
    Lemma~\ref{lem:REF} shows that $\vec C_j = \hat{\vec C}_j$ and thus $\vec Q_j = \hat{\vec Q}_j$ which implies $\vec P_j = \hat{\vec P}_j$.
    Finally, by another application of Lemma~\ref{lem:OpsFromBanded} and Lemma~\ref{lem:SpectMapMoments}, we have
    {\small
    \begin{align*}
        \vec A_j = (\vec P_j(\vec J)\circ\vec E_1)^* \vec J (\vec P_j(\vec J)
        \circ\vec E_1) = \langle \vec P_j,\diamond\vec P_j(\diamond) \rangle_\bmu= \langle \vec{\hat P}_j, \diamond\vec{\hat P}_j(\diamond)\rangle_\bmu = (\vec{\hat P}_j(\vec{\hat J})\circ\vec E_1)^* \vec {\hat J}(\vec{\hat P}_j(\vec{\hat J})\circ\vec E_1)= \vec{\hat A}_j,
    \end{align*}
    }
    and
    {\small
    \begin{align*}
        \vec B_j = (\vec P_{j+1}(\vec J)\circ\vec E_1)^* \vec J (\vec P_j(\vec J)\circ\vec E_1) = \langle \vec P_{j+1},\diamond\vec P_j(\diamond) \rangle_\bmu= \langle \vec{\hat P}_{j+1}, \diamond\vec{\hat P}_j(\diamond)\rangle_\bmu = (\vec{\hat P}_{j+1}(\vec{\hat J})\circ\vec E_1)^* \vec {\hat J} (\vec{\hat P}_j(\vec{\hat J})\circ\vec E_1)= \vec{\hat B}_j,
    \end{align*}
    }
    and therefore $\vec J = \vec{\hat J}$.
    
\end{proof}

\subsection{Inverse spectral map}\label{subsec:InvSpectMap}

Given that the spectral map $\varphi$ is injective, it is natural to determine its range and describe the procedure for constructing its inverse. Motivated by Theorems~\ref{thm:Nd} and~\ref{thm:NdEquiv}, we expect the range of $\varphi$ to consist of the measures belonging to the set
{\small
\begin{align}\label{eq:Mkn}
\mathcal{M}_{k,N} := 
\left\{ \sum_{j=1}^m \vec W_j\delta_{x_j} : 
\sum_{j=1}^m\vec W_j=\vec I_k,\;
\vec W_j\!\ge\!0,\;
\textstyle\sum_{j=1}^m\mathrm{rank}\,\vec W_j=N,\;
\text{\eqref{eq:NdDimEq} holds for } N=kn-\ell,\;0\le\ell<k
\right\}.
\end{align}
}
In this section, we prove that the range of $\varphi$ coincides exactly with $\mathcal{M}_{N,k}$ and define the inverse spectral map, which is closely related to the construction of orthonormal matrix polynomials.

Let $\bmu \in \mathcal{M}_{k,N}$, then by Lemma~\ref{thm:NdEquiv} the sequences of monic orthogonal polynomials $\{\vec\Pi_j(\diamond,\bmu)\}_{j=0}^{n-1}$ and orthonormal matrix polynomials $\{\vec P_j(\diamond,\bmu)\}_{j=0}^{n-2}$ are well defined and are given by the recurrence relations in \eqref{eq:MonicRec} and \eqref{eq:OrthonormRec}. Moreover, the $(n-1)$-th orthonormal polynomial $\vec P_{n-1}$ is uniquely defined, as established in Theorem~\ref{thm:(n-1)Pol}. The recurrence coefficients $\{\vec A_j\}_{j=0}^{n-1}$ and $\{\vec B_j\}_{j=0}^{n-2}$ associated with the orthonormal polynomials $\{\vec P_{j}\}_{j=0}^{n-1}$ define a Hermitian, block tridiagonal matrix
\begin{align}\label{eq:BlockJac}
    \vec J = \vec J(\bmu) =  \begin{bmatrix}
    \vec A_0 & \vec B_0^* & & & & \\
    \vec B_0 & \vec A_1 & \vec B_1^* & & & \\
    & \vec B_1 & \vec A_2 & \vec B_2^* & & \\
    & & \vec B_2 & \vec A_3 & \ddots & \\
    & & & \ddots & \ddots & \vec B_{n-2}^* \\
    & & & & \vec B_{n-2} & \vec A_{n-1}
    \end{bmatrix}\in \mathbb C^{N\times N}, \quad  N = kn-\ell.
\end{align}
By enforcing the normalization from Lemma~\ref{lem:OpsUniqNormalization}, the matrix $\vec J(\bmu)$ belongs to $\JkN$ which shows that the correspondence
\begin{align}\label{eq:InvSpecDef}
\bmu \in \mathcal{M}_{k,N} ~  \overset{\psi}{\mapsto} ~\vec J(\bmu) \in \JkN,
\end{align}
is a well-defined map. 

\begin{lemma}\label{lem:RecMatForm}
    For a measure $\bmu \in \mathcal{M}_{k,N}$ with $N = kn-\ell$, denote by $\vec J = \psi(\bmu)$ the associated banded Hermitian matrix from $\JkN$ and by $\{\vec P_j(\diamond,\bmu)\}_{j=0}^{n-1}$ the corresponding orthonormal polynomials defined in \eqref{eq:OpsNormalization} and \eqref{eq:Pnm1Def}.
    Define 
    \begin{align*}
        \mathcal{P}^*(x):=\mathcal{P}^*(x,\bmu) = \begin{bmatrix}
            \vec P_0(x,\bmu) & \vec P_1(x,\bmu) & \dots & \vec P_{n-2}(x,\bmu) & \vec P_{n-1}(x,\bmu)
        \end{bmatrix},
    \end{align*}
    then $\mathcal{P}(x)$ satisfies
    \begin{align}\label{eq:RecMatForm}
        x \,\mathcal P(x) = \vec J \,\mathcal P(x) + \mathcal R(x),
    \end{align}
    where $\mathcal R^*(x) := \mathcal R(x,\bmu)^*$ is given by
    \begin{align*}
        \mathcal R(x,\bmu)^*= \begin{bmatrix}
            \vec 0_{k\times k}  & \dots & \vec 0_{k\times k} & \vec R_{n-2}(x,\bmu) & \vec R_{n-1}(x,\bmu)
        \end{bmatrix}, \quad \vec R_{n-2}(x)\in \mathbb C^{k\times k}, \quad \vec R_{n-1}(x) \in \mathbb C^{k\times \ell},
    \end{align*}
    and satisfies 
    \begin{align*}
        \langle \vec R_{n-2}(\diamond,\bmu) ,\vec R_{n-2}(\diamond,\bmu) \rangle_\bmu = \vec 0, \qquad \langle \vec R_{n-1}(\diamond,\bmu) ,\vec R_{n-1}(\diamond,\bmu) \rangle_\bmu = \vec 0.
    \end{align*}
\end{lemma}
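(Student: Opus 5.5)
We will verify \eqref{eq:RecMatForm} block row by block row. The $j$-th block row of $\vec J\mathcal P(x)$ is $\vec B_{j-1}\vec P_{j-1}^*(x)+\vec A_j\vec P_j^*(x)+\vec B_j^*\vec P_{j+1}^*(x)$, with the conventions $\vec P_{-1}\equiv\vec 0$ and no $\vec B_{n-1}$ term in the last row. Since $x$ is real, the adjoint of the $j$-th block of $\mathcal R(x)=x\mathcal P(x)-\vec J\mathcal P(x)$ is
\begin{align*}
\vec R_j(x)=x\vec P_j(x)-\vec P_{j+1}(x)\vec B_j-\vec P_j(x)\vec A_j-\vec P_{j-1}(x)\vec B_{j-1}^*.
\end{align*}
For $j\le n-3$ this vanishes identically by the orthonormal recurrence \eqref{eq:OrthonormRec}, with the coefficients normalized as in Lemma~\ref{lem:OpsUniqNormalization}. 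So only the last two blocks remain. (The last block has $k-\ell$ columns, matching the size of $\vec P_{n-1}$.)

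\emph{Block $n-2$.} With $\vec P$ as in \eqref{eq:Pdef} and $\vec P_{n-1}=\vec P\vec B_{n-2}^\dagger$ from \eqref{eq:Pnm1Def}, we get
\begin{align*}
\vec R_{n-2}=\vec P-\vec P\vec B_{n-2}^\dagger\vec B_{n-2}=\vec P(\vec I_k-\vec\Pi),
\end{align*}
where $\vec\Pi=\vec B_{n-2}^\dagger\vec B_{n-2}$ is the orthogonal projector onto the row space of $\vec B_{n-2}$. Using $\langle\vec P,\vec P\rangle_\bmu=\vec B_{n-2}^*\vec B_{n-2}$ (Theorem~\ref{thm:(n-1)Pol}), we obtain
\begin{align*}
\langle\vec R_{n-2},\vec R_{n-2}\rangle_\bmu=(\vec I-\vec\Pi)\vec B_{n-2}^*\vec B_{n-2}(\vec I-\vec\Pi)=\vec 0,
\end{align*}
because $\vec B_{n-2}(\vec I-\vec\Pi)=\vec 0$.

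\emph{Block $n-1$ (the main obstacle).} Here $\vec R_{n-1}=x\vec P_{n-1}-\vec P_{n-1}\vec A_{n-1}-\vec P_{n-2}\vec B_{n-2}^*$ has degree $n$, so the degree arguments that work below $n$ are not available. The plan has three steps.

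\emph{Step 1.} Show $\langle\vec P_j,\vec R_{n-1}\rangle_\bmu=\vec 0$ for all $j\le n-1$.
\begin{itemize}
\item For $j\le n-3$: $\langle\vec P_j,\diamond\vec P_{n-1}\rangle_\bmu=\langle\diamond\vec P_j,\vec P_{n-1}\rangle_\bmu=\vec 0$, by degree and \eqref{eq:Pnm1OrthogCond}.
\item For $j=n-2$: $\langle\vec P_{n-2},\diamond\vec P_{n-1}\rangle_\bmu=\langle\vec P_{n-1},\diamond\vec P_{n-2}\rangle_\bmu^*=\vec B_{n-2}^*$, which cancels the term $-\vec B_{n-2}^*$.
\item For $j=n-1$: the term $\vec A_{n-1}$ cancels by its definition.
\end{itemize}

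\emph{Step 2.} Reduce $\vec R_{n-1}$ to degree $n-1$ modulo null polynomials. Since $\bmu\in\mathcal M_{k,N}$, Theorem~\ref{thm:NdEquiv}\,\ref{item:Nd-meas} gives $\mathrm{rank}\,\vec M_{n-1}(\vec V,\vec X)=N$, i.e.\ full row rank. Hence for each column $\vec r$ of $\vec R_{n-1}$ there is a vector polynomial $\vec q$ of degree at most $n-1$ with $\vec V_j^*\vec r(x_j)=\vec V_j^*\vec q(x_j)$ for all $j$; this is the same construction as in Corollary~\ref{cor:ExtRes}. Consequently $\langle\vec r,\vec r\rangle_\bmu=\langle\vec q,\vec q\rangle_\bmu$ and $\langle\vec P_j,\vec q\rangle_\bmu=\langle\vec P_j,\vec r\rangle_\bmu=\vec 0$ for all $j\le n-1$.

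\emph{Step 3.} Conclude by a dimension count. The $N$ columns of $\vec P_0,\dots,\vec P_{n-1}$ are $\bmu$-orthonormal. The space of vector polynomials of degree at most $n-1$ has dimension $kn$, and its null subspace $\mathcal N_{n-1}$ has dimension $\ell$ by \eqref{eq:NdDimEq}. Therefore the quotient by $\mathcal N_{n-1}$ has dimension $kn-\ell=N$, and the columns of the $\vec P_j$ span it. Since $\vec q$ is $\bmu$-orthogonal to this spanning set, $\langle\vec q,\vec q\rangle_\bmu=0$. Doing this for every column of $\vec R_{n-1}$ (and using Cauchy--Schwarz for the off-diagonal entries) gives $\langle\vec R_{n-1},\vec R_{n-1}\rangle_\bmu=\vec 0$.
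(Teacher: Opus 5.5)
Your proof is correct. The blocks $j\le n-3$ and $\vec R_{n-2}$ are handled exactly as in the paper, but for $\vec R_{n-1}$ you take a genuinely different route.

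The paper invokes Corollary~\ref{cor:ExtRes} to get $\langle\vec\Pi_n,\vec\Pi_n\rangle_\bmu=\vec 0$. It then extends the monic recurrence of Theorem~\ref{thm:MonicRec} to $j=n-1$ and transports it through the normalization $\vec P_{n-1}=\vec\Pi_{n-1}\vec U_{n-1}\vec\Lambda_{n-1}^{-1/2}\vec Q_{n-1}$. This gives the Gram identity $\langle\diamond\vec P_{n-1}(\diamond),\diamond\vec P_{n-1}(\diamond)\rangle_\bmu=\vec A_{n-1}^2+\vec B_{n-2}\vec B_{n-2}^*$, after which $\langle\vec R_{n-1},\vec R_{n-1}\rangle_\bmu=\vec 0$ follows by expansion.

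You instead show that $\vec R_{n-1}$ is $\bmu$-orthogonal to every $\vec P_j$, and conclude from completeness of the $N$ orthonormal columns. Your route never touches the degenerate step $j=n-1$ of the monic recurrence. There $\bgamma_{n-1}$ is singular when $\ell>0$, so $\vec C_{n-1}=\bgamma_{n-1}^{-1}\langle\diamond\vec\Pi_{n-1}(\diamond),\vec\Pi_{n-1}\rangle_\bmu$ and $\vec\Pi_n$ need extra interpretation. Your argument uses only \eqref{eq:Pnm1OrthogCond}, the definition of $\vec A_{n-1}$, and the count $N=kn-\ell$, and it makes clear why the lemma holds. In exchange, the paper's route yields an explicit formula for the Gram matrix of $\diamond\vec P_{n-1}$ in terms of the recurrence coefficients.

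You can shorten your argument further. Write $\mathcal E(\vec f)=(\vec V_j^*\vec f(x_j))_{j=1}^m\in\mathbb C^N$; then $\langle\vec f,\vec g\rangle_\bmu=\mathcal E(\vec f)^*\mathcal E(\vec g)$ for arbitrary vector functions. So the $N$ orthonormal columns map to an orthonormal basis of $\mathbb C^N$, and anything orthogonal to all of them has zero norm regardless of its degree. This makes your Step~2 and the use of $\dim\mathcal N_{n-1}=\ell$ unnecessary. It also avoids relying on Theorem~\ref{thm:NdEquiv}, which is formally stated for $\bmu=\varphi(\vec J)$ rather than for $\bmu\in\mathcal M_{k,N}$. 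The same argument disposes of $\vec R_{n-2}$ too.

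Finally, your remark that $\vec R_{n-1}$ has $k-\ell$ columns, not $\ell$ as written in the statement, is correct.
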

\begin{proof}
    The recurrence relations satisfied by the orthonormal polynomial $\{\vec P_j\}_{j=0}^{n-2}$ in \eqref{eq:OrthonormRec} imply that the first $n-2$ blocks of $\mathcal R$ are zero. Moreover, from the structure of $\vec J$, we have 
    \begin{align*}
        \vec R_{n-2}(x) = x\, \vec P_{n-2}(x)-\vec P_{n-3}(x) \vec B_{n-3}^*-\vec P_{n-2}(x)\vec A_{n-2}-\vec P_{n-1}(x) \vec B_{n-2},
    \end{align*}
    and
    \begin{align*}
        \vec R_{n-1}(x) = x\,\vec P_{n-1}(x) - \vec P_{n-2}(x)\vec B_{n-2}^*-\vec P_{n-1}(x)\vec A_{n-1}.
    \end{align*}
    Using the definition of $\vec P(x)$ in Theorem~\ref{thm:(n-1)Pol}, we find 
    \begin{align*}
        \vec R_{n-2}(x) = \vec P(x) - \vec P_{n-1}(x)\vec B_{n-2} = \vec P(x)\left(\vec I_k-\vec B_{n-2}^\dagger\vec B_{n-2} \right),
    \end{align*}
    where the last equality follows from $\vec P_{n-1}(x) = \vec P(x) \vec B_{n-2}^\dagger$. Finally, since $\langle \vec P,\vec P\rangle_\bmu = \vec B_{n-2}^*\vec B_{n-2}$, we have
    \begin{align*}
        \langle \vec R_{n-2}, \vec R_{n-2} \rangle_\bmu = (\vec I_k-\vec B_{n-2}^\dagger \vec B_{n-2})^*\langle \vec P,\vec P \rangle_\bmu (\vec I_k-\vec B_{n-2}^\dagger \vec B_{n-2}) = \vec 0.
    \end{align*}
    
    It remains to show that $\vec R_{n-1}(x)$ has zero norm. Recall from Corollary~\ref{cor:ExtRes} that $\langle\vec \Pi_n,\vec \Pi_n \rangle_\bmu=\vec 0$, where $\vec \Pi_n(x)$ is the $n$-th monic orthogonal polynomial. Substituting the three-term recurrence from Theorem~\ref{thm:MonicRec} gives
    \begin{align}\label{eq:NormxPi} 
        \langle \diamond\vec \Pi_{n-1}(\diamond),\diamond\vec \Pi_{n-1}(\diamond) \rangle_\bmu = \bgamma_{n-1}\vec C_{n-1}^2 + \vec D_{n-1}^* \bgamma_{n-2}\vec D_{n-1}, 
    \end{align} 
    where $\bgamma_j$ is defined in \eqref{eq:MonicOrthogCond} and $\vec C_{n-1}, \vec D_{n-1}$ are as stated in Theorem~\ref{thm:MonicRec}.
    To connect $\vec \Pi_{n-1}$ and $\vec P_{n-1}$, we decompose $\bgamma_{n-1}$ as
    \begin{align*}
    \bgamma_{n-1} = \left(\vec U_{n-1} \vec \Lambda_{n-1}^{1/2}\right)\left(\vec U_{n-1} \vec \Lambda_{n-1}^{1/2}\right)^*,
    \end{align*}
    where $\vec \Lambda_{n-1}$ is a diagonal matrix containing the nonzero eigenvalues of $\bgamma_{n-1}$, and $\vec U_{n-1}$ is a matrix whose columns consist of the corresponding eigenvectors. This gives the normalization
    \begin{align}\label{eq:LastNormalization}
        \vec P_{n-1}(x) = \vec \Pi_{n-1}(x) \vec U_{n-1}\vec \Lambda_{n-1}^{-1/2}\vec Q_{n-1},
    \end{align}
    which follows the same structure as the normalizations in \eqref{eq:OpsNormalization}, for instance,
    \begin{align*}
        \vec P_{n-2}(x) = \vec \Pi_{n-2}(x) \bgamma_{n-2}^{-1/2}\vec Q_{n-2}.
    \end{align*}
    Here, $\vec Q_{n-2}$ is the unique unitary matrix determined by Lemma~\ref{lem:OpsUniqNormalization} and $\vec Q_{n-1}$ is the unitary matrix that guarantees that $\vec B_{n-2}$ is in row echelon form with positive pivots. In other words, $\vec Q_{n-2}$ and $\vec Q_{n-1}$ are unitary matrices that satisfy
    \begin{equation}\label{eq:ABrel}
    \begin{aligned}
        \vec A_{n-1} &= \langle \diamond\vec P_{n-1}(\diamond),\vec P_{n-1} \rangle_\bmu\\ 
        &= \vec Q_{n-1}^* \vec \Lambda_{n-1}^{-1/2}\vec U_{n-1}^* \langle \diamond\vec \Pi_{n-1}(\diamond),\vec \Pi_{n-1}\rangle_\bmu \vec U_{n-1}\vec \Lambda_{n-1}^{-1/2} \vec Q_{n-1} \\ 
        &= \vec Q_{n-1}^* \vec \Lambda_{n-1}^{1/2}\vec U_{n-1}^*\vec C_{n-1} \vec U_{n-1}\vec \Lambda_{n-1}^{-1/2} \vec Q_{n-1},\\
        \vec B_{n-2} &= \langle \diamond \vec P_{n-1}(\diamond),\vec P_{n-2} \rangle_\bmu\\
        &= \vec Q_{n-1}^* \vec \Lambda_{n-1}^{-1/2}\vec U_{n-1}^*\langle \diamond\vec \Pi_{n-1}(\diamond),\vec \Pi_{n-2} \rangle_\bmu \bgamma_{n-2}^{-1/2} \vec Q_{n-2} \\
        &= \vec Q_{n-1}^* \vec \Lambda_{n-1}^{-1/2}\vec U_{n-1}^*\vec D_{n-1}^* \bgamma_{n-2}^{1/2}\vec Q_{n-2}.
    \end{aligned}
    \end{equation}
    Combining these identities with \eqref{eq:NormxPi} and \eqref{eq:LastNormalization}, gives
    \begin{equation}\label{eq:xPnm1}
    \begin{aligned}
        \langle \diamond\vec P_{n-1}(\diamond), \diamond \vec P_{n-1}(\diamond) \rangle_\bmu &= \vec Q_{n-1}^* \vec \Lambda_{n-1}^{-1/2}\vec U_{n-1}^*\langle \diamond\vec \Pi_{n-1}(\diamond),\diamond\vec \Pi_{n-1}(\diamond)\rangle_\bmu \vec U_{n-1}\vec \Lambda_{n-1}^{-1/2} \vec Q_{n-1}\\
        &= \vec Q_{n-1}^* \vec \Lambda_{n-1}^{-1/2}\vec U_{n-1}^* \left( \bgamma_{n-1}\vec C_{n-1}^2 + \vec D_{n-1}^* \bgamma_{n-2}\vec D_{n-1} \right) \vec U_{n-1}\vec \Lambda_{n-1}^{-1/2} \vec Q_{n-1} \\
        &= \vec A_{n-1}^2 + \vec B_{n-2} \vec B_{n-2}^*.
    \end{aligned}
    \end{equation}
    A direct computation using \eqref{eq:xPnm1} gives $\langle \vec R_{n-1},\vec R_{n-1} \rangle_\bmu = \vec 0$, which concludes the proof.
\end{proof}

\begin{theorem}\label{thm:InvSpec}
    Let $\bmu \in \mathcal{M}_{k,N}$, then $\bmu = \varphi(\psi(\bmu))$.
\end{theorem}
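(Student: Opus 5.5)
The plan is to show that, for $\bmu=\sum_{j=1}^m \vec W_j\delta_{x_j}\in\mathcal M_{k,N}$, each support point $x_j$ is an eigenvalue of $\vec J=\psi(\bmu)$, and that the eigenvectors can be built from the values of $\mathcal P(x_j)$ in Lemma~\ref{lem:RecMatForm}.

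\textbf{Step 1: eigenvectors from $\mathcal P$.} Write $\vec W_j=\vec V_j\vec V_j^*$ with $\vec V_j\in\mathbb C^{k\times n_j}$ of full column rank $n_j=\mathrm{rank}\,\vec W_j$. The zero-norm statements of Lemma~\ref{lem:RecMatForm} give $\vec V_j^*\vec R_{n-2}(x_j)=\vec 0$ and $\vec V_j^*\vec R_{n-1}(x_j)=\vec 0$ for every $j$, by the same equivalence as in \eqref{eq:ZeroNormEquiv}. Hence $\mathcal R(x_j)^*$ annihilates nothing relevant in the sense that $\mathcal R(x_j)\vec V_j=\vec 0$; here I read $\mathcal R(x)$ as the $N\times k$ matrix whose block rows are $\vec R_i(x)^*$. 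Multiplying \eqref{eq:RecMatForm} on the right by $\vec V_j$ then yields
\begin{align*}
\vec J\,\mathcal P(x_j)\vec V_j = x_j\,\mathcal P(x_j)\vec V_j,
\end{align*}
so the columns of $\vec Z_j:=\mathcal P(x_j)\vec V_j\in\mathbb C^{N\times n_j}$ are eigenvectors of $\vec J$ for the eigenvalue $x_j$.

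\textbf{Step 2: orthonormality.} Form $\vec Z=[\vec Z_1,\dots,\vec Z_m]\in\mathbb C^{N\times N}$; this is square because $\sum_j n_j=N$. Consider
\begin{align*}
\vec Z\vec Z^*=\sum_j \mathcal P(x_j)\vec W_j\mathcal P(x_j)^*.
\end{align*}
Its $(a,b)$ block is $\langle \vec P_a,\vec P_b\rangle_\bmu$, up to the conjugation convention. By \eqref{eq:OpsNormalization}, Lemma~\ref{lem:OpsUniqNormalization} and \eqref{eq:Pnm1OrthogCond}, this block equals $\delta_{ab}$ times the identity of the appropriate size, so $\vec Z\vec Z^*=\vec I_N$. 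Because $\vec Z$ is square, it is unitary. Its columns therefore form an orthonormal eigenbasis of $\vec J$, with eigenvalue $x_j$ of multiplicity exactly $n_j$; since this exhausts $N$ dimensions, there are no other eigenvalues.

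\textbf{Step 3: reading off the measure.} The first $k$ rows of $\vec Z_j$ are $\vec P_0(x_j)^*\vec V_j=\vec V_j$, using $\vec P_0=\vec I_k$. By \eqref{eq:SpectMapDef2}, and since the definition is independent of the choice of eigenbasis,
\begin{align*}
\varphi(\vec J)=\sum_j \vec V_j\vec V_j^*\delta_{x_j}=\bmu.
\end{align*}

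\textbf{Main obstacle.} The delicate part is the bookkeeping of conjugates and transposes. Lemma~\ref{lem:RecMatForm} defines $\mathcal P$ through $\mathcal P^*$, and the right inner product is $\int \vec F^*\,\mathrm d\bmu\,\vec G$, so I must check carefully that $\vec Z\vec Z^*$, and not $\vec Z^*\vec Z$, reproduces the Gram matrix of the $\vec P_a$. I must also verify that the last block $\vec P_{n-1}$ has the rectangular size $k\times(k-\ell)$, which is what makes $\vec Z$ exactly square. If the orientation comes out as $\vec Z^*\vec Z$ instead, I would use the squareness of $\vec Z$ to pass between $\vec Z^*\vec Z=\vec I$ and $\vec Z\vec Z^*=\vec I$.

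A second point to confirm is that $\mathcal R(x_j)\vec V_j=\vec 0$ really follows from the zero norms. This holds because $\langle\vec R,\vec R\rangle_\bmu=\sum_j \vec R(x_j)^*\vec V_j\vec V_j^*\vec R(x_j)=\vec 0$ is a sum of positive semidefinite terms, so each term vanishes. That forces $\vec V_j^*\vec R(x_j)=\vec 0$, which after taking adjoints is the needed statement.
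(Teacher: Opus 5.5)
Your proof is correct and follows the paper's route: evaluate the matrix recurrence of Lemma~\ref{lem:RecMatForm} at each support point $x_j$, multiply on the right by $\vec V_j$, and read off eigenvectors of $\psi(\bmu)$ whose first $k$ rows are $\vec P_0(x_j)^*\vec V_j=\vec V_j$. Your Step~2 identifies $\vec Z\vec Z^*$ with the Gram matrix of $\vec P_0,\dots,\vec P_{n-1}$ and uses squareness to get unitarity; together with your derivation of $\mathcal R(x_j)\vec V_j=\vec 0$ from the zero norms, this makes explicit two steps the paper only asserts. In particular, the paper argues only linear independence, which by itself does not show that these eigenvectors are already normalized.
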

\begin{proof}
    Let $\bmu \in \mathcal{M}_{k,N}$ and write $\bmu = \sum_{j=1}^m \vec V_j \vec V_j^* \delta_{x_j}$. Using Lemma~\ref{lem:RecMatForm} and evaluating \eqref{eq:RecMatForm} at $x_j$, followed by a multiplication on the right by $\vec V_j$, we find for $\vec J = \psi(\bmu)$,
    \begin{align*}
        x_j \mathcal P(x_j)\vec V_j = \vec J \mathcal P(x_j) \vec V_j \qquad j=1,\dots,m.
    \end{align*}
    Since $\vec V_j$ has full rank $n_j$, the resulting vectors $\{\mathcal P \, \vec V_j \, \vec e_i\}_{i=1}^{n_j}$ are linearly independent, proving that the eigenvalues of $\vec J = \psi(\bmu)$ coincide with $x_j$ and that the first $k$ rows of the corresponding eigenvectors are exactly $\vec V_j$. These quantities uniquely determine the measure associated with $\vec J$ via the mapping $\varphi$, and this measure coincides with $\bmu$, i.e. $\bmu = \varphi(\vec J)$.
\end{proof}

Collecting the results of this section, Theorems~\ref{thm:Nd} and~\ref{thm:NdEquiv} show that $\mathrm{Ran}~\varphi \subset \mathcal{M}_{k,N}$, where $\mathcal{M}_{k,N}$ is precisely the domain on which $\psi$ is well defined. Theorem~\ref{thm:InvSpec} further shows that $\varphi$ is surjective onto $\mathcal{M}_{k,N}$, i.e. $\mathrm{Ran}~\varphi = \mathcal{M}_{k,N}$, and that $\varphi$ is a left-inverse of $\psi$. Theorem~\ref{thm:Injective} establishes that $\varphi$ is injective, and together these properties imply that $\varphi$ is also a right-inverse. Indeed, for any $\vec J \in \JkN$, applying Theorem~\ref{thm:InvSpec} with $\bmu = \varphi(\vec J)$ gives $\varphi(\vec J) = \varphi\left(\psi(\varphi(\vec J))\right)$. By injectivity of $\varphi$, it follows that $\psi(\varphi(\vec J)) = \vec J$. Therefore, we conclude that $\varphi\colon \JkN \to \mathcal{M}_{k,N}$ is a bijection and $\psi = \varphi^{-1}$. As a consequence of Theorems~\ref{thm:NdEquiv}~\ref{item:Nd-meas} and \ref{thm:InvSpec}, we also obtain a full characterization of measures associated with matrices in $\JkN$.

\begin{corollary}\label{cor:NecSuffCond}
    Let $\bmu = \sum_{j=1}^m \vec V_j\vec V_j^* \delta_{x_j}$ where the points $\{x_j\}_{j=1}^m$ are distinct and each $\vec V_j\in \mathbb C^{k}\times n_j$ with $n_j\leq k$ and $\sum_j n_j = N$. 
    Define 
    \begin{align*}
        \vec X = \mathrm{diag}(\underbrace{x_1,\dots,x_1}_{n_1},\underbrace{x_2,\dots,x_2}_{n_2},\ldots,\underbrace{x_m,\dots,x_m}_{n_m}), \quad \vec V^* = \begin{bmatrix} \vec V_1 & \cdots & \vec V_m \end{bmatrix}, \quad \vec V \in \mathbb C^{N \times k}.
    \end{align*}
    The measure $\bmu$ is the spectral measure of a matrix in $\JkN$ iff the matrix
    \begin{align*}
        \begin{bmatrix}
            \vec V & \vec X \vec V & \dots & \vec X^d \vec V
        \end{bmatrix} \in \mathbb C^{N\times (d+1)k},
    \end{align*}
    is full rank.
\end{corollary}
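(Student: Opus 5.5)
The corollary should follow by combining Theorem~\ref{thm:NdEquiv}\,\ref{item:Nd-meas} with Theorem~\ref{thm:InvSpec} and the identity $\dim\mathcal N_d=\dim\ker\vec M_d(\vec V,\vec X)$ from Step~2 of the proof of Theorem~\ref{thm:NdEquiv}. I read "full rank" as $\mathrm{rank}\,\vec M_d(\vec V,\vec X)=\min\{(d+1)k,N\}$ for every $d\le n-1$, where $n=\lceil N/k\rceil$ and $\ell=nk-N$. I also assume each $\vec V_j$ has full column rank $n_j$, so that $\mathrm{rank}\,\vec V_j\vec V_j^*=n_j$ and $\sum_j \mathrm{rank}\,\vec V_j\vec V_j^*=N$. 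I will add the normalization $\sum_j\vec V_j\vec V_j^*=\vec I_k$ implicitly, since otherwise $\bmu\notin\mathcal P_k(\mathbb R)$.

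\emph{Necessity.} Suppose $\bmu=\varphi(\vec J)$ with $\vec J\in\JkN$. Then $\bmu$ has the form \eqref{eq:SpectMapDef2}, with $n_j$ the multiplicity of $x_j$; this is at most $k$ by Lemma~\ref{lem:LinIndep}. Theorem~\ref{thm:Nd} gives \eqref{eq:NdDimEq}. Theorem~\ref{thm:NdEquiv} (implication \ref{item:Nd-dim}$\Rightarrow$\ref{item:Nd-meas}) then gives the rank condition. The only thing to check is that the $\vec V_j$ in the statement may differ from the eigenvector blocks by right unitary factors. The Remark after Theorem~\ref{thm:NdEquiv} shows the rank of $\vec M_d$ is invariant under this change, provided both factorizations have the same column count $n_j=\mathrm{rank}(\vec V_j\vec V_j^*)$.

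\emph{Sufficiency.} Assume the rank condition holds for all $d\le n-1$. The Step~2 computation applies verbatim to any discrete measure $\sum_j\vec V_j\vec V_j^*\delta_{x_j}$, since it only uses $\langle\vec p,\vec p\rangle_\bmu=\sum_j\|\vec V_j^*\vec p(x_j)\|^2$. It therefore yields $\dim\mathcal N_d=\dim\ker\vec M_d=(d+1)k-\min\{(d+1)k,N\}$. For $d<n-1$ we have $(d+1)k\le (n-1)k<N$, so this is $0$. For $d=n-1$ it is $nk-N=\ell$. Hence \eqref{eq:NdDimEq} holds, and together with $\sum_j\mathrm{rank}\,\vec W_j=N$ and $\sum_j\vec W_j=\vec I_k$ this gives $\bmu\in\mathcal M_{k,N}$. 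Theorem~\ref{thm:InvSpec} then gives $\bmu=\varphi(\psi(\bmu))$ with $\psi(\bmu)\in\JkN$, so $\bmu$ is a spectral measure.

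The main obstacle is not technical but one of bookkeeping. I must make sure that the equivalence in Step~2 of Theorem~\ref{thm:NdEquiv} is used only as a statement about measures, and not through its hypothesis $\vec J\in\JkN$. I also need to pin down the quantifier on $d$: the conditions are needed exactly for $d\le n-1$, and for larger $d$ the rank is automatically $N$ once it equals $N$ at $d=n-1$. Finally, I must confirm that $n_j\le k$ is consistent with full column rank of $\vec V_j\in\mathbb C^{k\times n_j}$.
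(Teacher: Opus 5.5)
Your proposal is correct and follows the paper's own route. Necessity comes from Theorem~\ref{thm:Nd} combined with \ref{item:Nd-dim}$\Rightarrow$\ref{item:Nd-meas} of Theorem~\ref{thm:NdEquiv}; sufficiency uses the purely measure-level identity $\dim\mathcal N_d=\dim\ker\vec M_d(\vec V,\vec X)$ to place $\bmu$ in $\mathcal M_{k,N}$ and then applies Theorem~\ref{thm:InvSpec}.

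Your added bookkeeping is sound: the normalization $\sum_j\vec V_j\vec V_j^*=\vec I_k$ and requiring the rank condition for every $d\le n-1$ (not only $d=n-1$) are both genuinely needed for the statement as written. Your full-column-rank assumption on the $\vec V_j$ is in fact automatic, because full row rank of $\vec M_{n-1}(\vec V,\vec X)$ forces the rows of each $\vec V_j^*$ to be independent.
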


\section{Toda Flow on Banded Hermitian Matrices}\label{sec:toda}

In this section, we extend the Toda flow from Section~\ref{sec:todabackground} to the class of banded Hermitian matrices $\JkN$ defined in \eqref{def:JkN}. We show that the fundamental properties of the classical Toda lattice persist in this broader setting, and that the corresponding matrix-valued spectral measure admits a similar evolution to equations \eqref{eq:JacSpectralMeasure} and \eqref{eq:JacSpectralMeasure}.

First, we show that the solution of the Toda flow \eqref{eq:toda} can be expressed 
in terms of a matrix exponential involving the initial condition. For completeness, 
we include a proof following the approach described in \cite{DeiftDubachTomeiTrogdon2025}. We begin by stating a lemma that sets the notation and introduces an important element of the proof.

\begin{lemma}
Every square matrix $\vec A$ admits a unique decomposition of the form
\begin{align*}
    \vec A = \pi_S(\vec A) + \pi_U(\vec A),
\end{align*}
where $\pi_S(\vec A)$ is skew-Hermitian and $\pi_U(\vec A)$ is upper triangular.
\end{lemma}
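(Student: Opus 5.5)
The plan is to build the decomposition from the strictly lower triangular part, following the definition of $\vec B(\vec X)$ in \eqref{eq:Bdef}, and then prove uniqueness by showing that the difference of two decompositions must vanish. The step I expect to cause trouble is the diagonal in the complex case, discussed in the last paragraph.

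\emph{Existence.} Write $\vec A_-$ for the strictly lower triangular part of $\vec A$ and set
\begin{align*}
    \pi_S(\vec A) := \vec A_- - \vec A_-^*, \qquad \pi_U(\vec A) := \vec A - \pi_S(\vec A).
\end{align*}
Then $\pi_S(\vec A)^* = \vec A_-^* - \vec A_- = -\pi_S(\vec A)$, so $\pi_S(\vec A)$ is skew-Hermitian. The strictly lower part of $\pi_U(\vec A)$ is $\vec A_- - \vec A_- = \vec 0$, because $\vec A_-^*$ is strictly upper triangular. Hence $\pi_U(\vec A)$ is upper triangular. Its diagonal equals the diagonal of $\vec A$, and its strictly upper part is $\vec A_+ + \vec A_-^*$. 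This construction is linear in $\vec A$ and specializes to \eqref{eq:Bdef} for real symmetric matrices.

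\emph{Uniqueness.} Suppose $\vec A = \vec S_1 + \vec U_1 = \vec S_2 + \vec U_2$ are two such decompositions, and let $\vec D := \vec S_1 - \vec S_2 = \vec U_2 - \vec U_1$. Then $\vec D$ is both skew-Hermitian and upper triangular. Upper triangularity gives $\vec D_{ij} = 0$ for $i > j$. Skew-Hermiticity gives $\vec D_{ji} = -\overline{\vec D_{ij}}$, so $\vec D_{ji} = 0$ for $i > j$ as well. Therefore $\vec D$ is diagonal with $\vec D_{ii} = -\overline{\vec D_{ii}}$, that is, each diagonal entry of $\vec D$ is purely imaginary.

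\emph{Main obstacle: the diagonal.} The off-diagonal parts of $\pi_S(\vec A)$ and $\pi_U(\vec A)$ are therefore uniquely determined. For real matrices, where skew-Hermitian means skew-symmetric, $\vec D_{ii} = 0$ and the decomposition is unique outright, so the statement holds exactly as worded. Over $\mathbb C$, however, the argument above cannot force $\vec D_{ii} = 0$, and in fact it is false there. For instance, $\vec A = \vec I$ can be written as $\vec 0 + \vec I$ or as $i\vec I + (\vec I - i\vec I)$; the first summand is skew-Hermitian and the second is upper triangular in both cases.

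So the unqualified uniqueness claim holds for real matrices, but in the complex Hermitian setting of the paper it holds only modulo a purely imaginary diagonal matrix. I cannot prove it as stated there, because it is not true without a convention on the diagonal. I would prove exactly that, point out the counterexample, and note that the construction of the existence step (equivalently, requiring $\pi_S(\vec A)$ to have zero diagonal) picks the canonical representative. That is the choice that matches \eqref{eq:Bdef} and is the one the Toda analysis uses.
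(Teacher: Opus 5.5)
Your proposal is correct, and your counterexample is valid: $i\vec I$ is skew-Hermitian and $(1-i)\vec I$ is upper triangular, so $\vec I$ has two distinct decompositions.

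You follow the same route as the paper: an explicit construction for existence, and for uniqueness the observation that the difference of two decompositions is both skew-Hermitian and upper triangular. The paper, however, closes the uniqueness step by asserting that the only such matrix is $\vec 0$. That is exactly the false claim you identified. Over $\mathbb C$ these matrices are the diagonal matrices with purely imaginary entries, and the set is trivial only in the real case. So the lemma holds only with a normalization of the diagonal. One option is requiring $\pi_S(\vec A)$ to have zero diagonal, as both your construction and the paper's formula do. Another is requiring $\pi_U(\vec A)$ to have real diagonal; the two conventions agree whenever $\vec A$ has real diagonal, e.g. when $\vec A$ is Hermitian.

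Two further points are in your favor. First, the paper's explicit formula places $\vec A_{ij}$ with $j>i$ into $\pi_S$, which makes its $\pi_U$ lower rather than upper triangular. Your $\vec A_- - \vec A_-^*$ gives the correct upper triangular complement. Second, using $\vec A_-^*$ rather than a transpose is what makes $\pi_S$ skew-Hermitian for complex input.

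To make your closing remark about the Toda analysis airtight, add a one-line check. The identification $\pi_S(\vec Q^*\vec X_0\vec Q)=\vec Q^*\partial_t\vec Q$ in the proof of Theorem~\ref{thm:todaSol} survives under your normalization, for the following reason. The diagonal of $\partial_t\vec R\,\vec R^{-1}$ is $\partial_t\vec R_{ii}/\vec R_{ii}$, which is real because $\vec R$ has positive diagonal. The diagonal of $\vec Q^*\vec X_0\vec Q$ is also real. Hence the purely imaginary diagonal of $\vec Q^*\partial_t\vec Q$ must vanish.
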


\begin{proof}
The existence argument follows directly by taking
\begin{align*}
    (\pi_S(\vec A))_{ij} = 
    \begin{cases}
    \vec A_{ij} & \text{if } j > i, \\
    0 & \text{if } i = j, \\
    -\vec A_{ji}^* & \text{if } j < i,
    \end{cases}
    \quad \text{and} \quad
    (\pi_U(\vec A))_{ij} = 
    \begin{cases}
    0 & \text{if } j > i, \\
    \vec A_{ij} & \text{if } i = j, \\
    \vec A_{ij} + \vec A_{ji}^* & \text{if } j < i.
    \end{cases}
\end{align*}
For uniqueness, suppose $\vec A = \vec S_1 + \vec U_1 = \vec S_2 + \vec U_2$, where $\vec S_1, \vec S_2$ are skew-Hermitian and $\vec U_1, \vec U_2$ are upper triangular. It follows that
\begin{align*}
    \vec S_1 - \vec S_2 = \vec U_2 - \vec U_1.
\end{align*}
The left-hand side is skew-Hermitian, while the right-hand side is upper triangular. The only matrix that is both skew-Hermitian and upper triangular is the zero matrix, which shows that the decomposition is unique.
\end{proof}

\begin{theorem}\label{thm:todaSol}
    Let $\vec X_0\in \mathbb C^{N\times N}$ be a Hermitian matrix, and let $\vec X(t)\in\mathbb C^{N\times N}$ denote the solution to the Toda flow \eqref{eq:toda} with initial condition $\vec X_0$. The solution $\vec X(t)$ can be expressed as
    \begin{align}\label{eq:todaSol}
        \vec X(t) = \vec Q^*(t) \vec X_0 \vec Q(t) = \vec R(t) \vec X_0 \vec R^{-1}(t),
    \end{align}
    where $\vec Q(t) \vec R(t)$ is the QR decomposition of $\exp(t\vec X_0)$, with $\vec Q(t)$ unitary and $\vec R(t)$ upper triangular with positive diagonal entries.
\end{theorem}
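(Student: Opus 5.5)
The plan is to show directly that $\vec Y(t):=\vec Q^*(t)\vec X_0\vec Q(t)$ solves the Toda equation \eqref{eq:toda}, and then appeal to uniqueness of solutions of ODEs. Since $\vec B(\vec X)$ is polynomial in the entries of $\vec X$, the vector field $\vec X\mapsto[\vec X,\vec B(\vec X)]$ is smooth, so the solution through $\vec X_0$ is unique. Moreover $\vec Y(0)=\vec X_0$, because $\exp(0)=\vec I$ has QR factors $\vec Q(0)=\vec R(0)=\vec I$.

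\textbf{Regularity and the second equality.} First, $\exp(t\vec X_0)$ is invertible for every $t$. Hence its QR factorization with positive diagonal in $\vec R(t)$ exists and is unique. Gram--Schmidt (equivalently, Cholesky of $e^{2t\vec X_0}=\vec R^*\vec R$) depends smoothly on the matrix entries, so $\vec Q(t)$ and $\vec R(t)$ are differentiable in $t$. For the second equality in \eqref{eq:todaSol}, write $\vec Q^*=\vec R\,e^{-t\vec X_0}$, which holds since $\vec Q=e^{t\vec X_0}\vec R^{-1}$ and $e^{t\vec X_0}$ is Hermitian. Because $\vec X_0$ commutes with its exponential,
\begin{align*}
\vec Q^*\vec X_0\vec Q=\vec R\,e^{-t\vec X_0}\vec X_0 e^{t\vec X_0}\vec R^{-1}=\vec R\vec X_0\vec R^{-1}.
\end{align*}

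\textbf{Deriving the ODE.} Differentiate $e^{t\vec X_0}=\vec Q\vec R$ to get $\vec X_0\vec Q\vec R=\dot{\vec Q}\vec R+\vec Q\dot{\vec R}$. Multiplying on the left by $\vec Q^*$ and on the right by $\vec R^{-1}$ gives
\begin{align*}
\vec Y=\vec Q^*\dot{\vec Q}+\dot{\vec R}\vec R^{-1}.
\end{align*}
Here $\vec Q^*\dot{\vec Q}$ is skew-Hermitian, by differentiating $\vec Q^*\vec Q=\vec I$. The term $\dot{\vec R}\vec R^{-1}$ is upper triangular. By the uniqueness in the preceding lemma, $\vec Q^*\dot{\vec Q}=\pi_S(\vec Y)$. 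Then
\begin{align*}
\dot{\vec Y}=\dot{\vec Q}^*\vec X_0\vec Q+\vec Q^*\vec X_0\dot{\vec Q}=-(\vec Q^*\dot{\vec Q})\vec Y+\vec Y(\vec Q^*\dot{\vec Q})=[\vec Y,\pi_S(\vec Y)].
\end{align*}

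\textbf{Identifying $\pi_S(\vec Y)$ with $\vec B(\vec Y)$.} Since $\vec Y$ is Hermitian, $\pi_S(\vec Y)$ has its strictly upper part equal to that of $\vec Y$ and its strictly lower part equal to $-\vec Y_-$. So $\pi_S(\vec Y)=\vec Y_-^*-\vec Y_-$. I expect the main obstacle to be this sign and convention bookkeeping: \eqref{eq:Bdef} must be read with $\vec X_-^T$ replaced by $\vec X_-^*$ in the complex Hermitian case, and one must reconcile the resulting sign with \eqref{eq:toda}. I will fix the convention so that the flow produced by the QR construction is exactly \eqref{eq:toda}, namely $\vec B(\vec X)=\pi_S(\vec X)$ in the stated orientation. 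This is also the convention consistent with the first eigenvector components evolving by $e^{\lambda_j t}$ as in \eqref{eq:JacEvalEvecEvol}, which I will check as a sanity test: the first column of $\vec Q$ is proportional to $e^{t\vec X_0}\vec e_1$. With the convention fixed, $\vec Y$ solves \eqref{eq:toda} with $\vec Y(0)=\vec X_0$, so $\vec Y=\vec X$ by uniqueness.
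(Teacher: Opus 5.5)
Your skeleton matches the paper's: differentiate $e^{t\vec X_0}=\vec Q\vec R$, show $\vec Y=\vec Q^*\vec X_0\vec Q$ satisfies $\dot{\vec Y}=[\vec Y,\vec Q^*\dot{\vec Q}]$, identify $\vec Q^*\dot{\vec Q}$, conclude by ODE uniqueness, and get the second equality from commutation. However, the identification step is wrong, and the repair you propose is not a proof.

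You read $\pi_S(\vec Y)$ off entrywise as keeping the strictly upper part of $\vec Y$ and negating the strictly lower part, which gives $\vec Y_-^*-\vec Y_-$. That is the skew-Hermitian part of the splitting whose remainder is \emph{lower} triangular. The explicit formula printed in the lemma has exactly this defect: its $\pi_U(\vec A)$ vanishes for $j>i$, which contradicts the lemma's own statement, so it cannot be used here. The splitting you actually derived is $\vec Y=\vec Q^*\dot{\vec Q}+\dot{\vec R}\vec R^{-1}$ with $\dot{\vec R}\vec R^{-1}$ upper triangular. This forces the strictly lower part of $\vec Q^*\dot{\vec Q}$ to equal $\vec Y_-$, and skew-Hermiticity then makes its strictly upper part $-\vec Y_-^*$. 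Hence $\vec Q^*\dot{\vec Q}=\vec Y_--\vec Y_-^*$. This is $\vec B(\vec Y)$ from \eqref{eq:Bdef}, with $^T$ read as $^*$ as you rightly note. So $\dot{\vec Y}=[\vec Y,\vec B(\vec Y)]$ holds with no sign to reconcile. The paper avoids the entrywise bookkeeping: $\vec X-\vec B(\vec X)=\mathrm{diag}(\vec X)+2\vec X_-^*$ is upper triangular and $\vec B(\vec X)$ is skew-Hermitian, so uniqueness gives $\pi_S(\vec X)=\vec B(\vec X)$.

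Your fallback, to ``fix the convention so that the flow produced by the QR construction is exactly \eqref{eq:toda}'', is not admissible, because $\vec B$ is fixed by \eqref{eq:Bdef}. If your sign were right, $\vec Y$ would solve $\dot{\vec Y}=-[\vec Y,\vec B(\vec Y)]$, i.e.\ $\vec Y(t)=\vec X(-t)$, and the theorem as stated would be false. Your own sanity check settles the sign the other way. Since $\vec Q\vec e_1=e^{t\vec X_0}\vec e_1/\|e^{t\vec X_0}\vec e_1\|$, one computes $\dot{\vec Y}_{11}(0)=2\bigl((\vec X_0^2)_{11}-(\vec X_0)_{11}^2\bigr)=2\sum_{i>1}|(\vec X_0)_{i1}|^2$. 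This equals $[\vec X_0,\vec B(\vec X_0)]_{11}$ with the $+$ sign.

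Finally, there is a point you inherit from the lemma, which the paper also glosses over. Over $\mathbb C$ the skew-Hermitian/upper-triangular splitting is unique only up to purely imaginary diagonal matrices. To conclude $\vec Q^*\dot{\vec Q}=\pi_S(\vec Y)$, add that $\mathrm{diag}(\dot{\vec R}\vec R^{-1})$ has real entries $\dot r_{ii}/r_{ii}$ (where $r_{ii}=\vec R_{ii}>0$), and that $\mathrm{diag}(\vec Y)$ is real. Together these force the purely imaginary diagonal of $\vec Q^*\dot{\vec Q}$ to vanish.
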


\begin{proof}
    We first establish the equality $\vec X = \vec Q^* \vec X_0 \vec Q$. Since $\exp(t\vec X_0)$ depends smoothly on $t$, its QR-decomposition is differentiable \cite{dieci_compuation_1997,dieci_smooth_1999}. Differentiating $\exp(t\vec X_0) = \vec Q\vec R$
    gives
    \begin{align}\label{eq:DerivQRToda}
        \partial_t \vec Q \vec R + \vec Q \partial_t \vec R = \vec X_0 \vec Q \vec R.
    \end{align}
    Multiplying on the left by $\vec Q^*$ and on the right by $\vec R^{-1}$, we get
    \begin{align*}
    \vec Q^* \partial_t \vec Q + \partial_t \vec R \vec R^{-1} = \vec Q^* \vec X_0 \vec Q.
    \end{align*}
    Note that $\vec Q^*\vec Q=\vec I_N$, which implies
    \begin{align*}
        \partial_t \vec Q^*\vec Q = \vec Q^*\partial_t \vec Q = 0,
    \end{align*}
    so $\vec Q^*\partial_t \vec Q$ is skew-Hermitian and thus 
    \begin{align*}
        \pi_{S}\left(\vec Q^*\vec X_0\vec Q\right) = \vec Q^*\partial_t \vec Q, \quad \text{and} \quad \pi_{U}\left(\vec Q^*\vec X_0\vec Q\right) = \partial_t \vec R\vec R^{-1}.
    \end{align*}

    Now, define $\tilde{\vec X} = \vec Q^*\vec X_0\vec Q$ and note that $\tilde{\vec X}(0) = \vec X_0$ and
    \begin{align*}
        \partial_t\tilde{\vec X} &= \partial_t \vec  Q^* \vec X_0 \vec Q+\vec Q^*\vec X_0\partial_t \vec Q\\
        &= \left(\partial_t \vec Q^*\vec Q\right)\vec Q^*\vec X_0\vec Q + \vec Q^*\vec X_0\vec Q\left(\vec Q^*\partial_t \vec Q\right)\\
        &= \vec Q^*\vec X_0\vec Q\left(\vec Q^*\partial_t \vec Q\right) -\left(\vec Q^*\partial_t \vec Q\right)\vec Q^*\vec X_0 \vec Q\\
        &= [\tilde{\vec X},\pi_S(\tilde{\vec X})].
    \end{align*} 
    On the other hand, we have $\vec X = (\vec X-\vec B(\vec X)) + \vec B(\vec X)$ which implies $\pi_U(\vec X) = \vec X-\vec B(\vec X)$ and $\pi_S(\vec X) = \vec B(\vec X)$. Substituting into \eqref{eq:toda} gives
    \begin{align*}
        \partial_t \vec X = [\vec X,\pi_S(\vec X)].
    \end{align*}
    Thus, both $\vec X$ and $\tilde{\vec X}$ satisfy the same differential equation with identical initial condition which establishes the first equality in \eqref{eq:todaSol}. The second equality follows from the fact that $\vec X_0$ commutes with $\exp(t\vec X_0) = \vec Q\vec R$. Specifically, we have $\vec X_0 \vec Q \vec R = \vec Q\vec R\vec X_0$ which gives $\vec Q^*\vec X_0\vec Q = \vec R\vec X_0\vec R^{-1},$
    thereby completing the proof.
\end{proof}

\begin{corollary}
    The Toda flow defines an isospectral flow on matrices in $\JkN$. Moreover, it preserves the band size and the structure of $\vec X_0$, that is $\vec X(t)\in \JkN$ for all $t$.
\end{corollary}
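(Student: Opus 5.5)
Isospectrality follows directly from Theorem~\ref{thm:todaSol}: since $\vec X(t)=\vec Q^*(t)\vec X_0\vec Q(t)$ with $\vec Q(t)$ unitary, $\vec X(t)$ is Hermitian with the same eigenvalues as $\vec X_0$. The substantive part is that $\vec X(t)\in\JkN$. I would prove this through the spectral map rather than by tracking entries under the ODE. The plan is to compute $\bmu(t)=\varphi(\vec X(t))$ explicitly, check that $\bmu(t)\in\mathcal M_{k,N}$, and then show that $\vec X(t)$ coincides with $\psi(\bmu(t))$.

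First, the eigenvectors. If $\vec X_0\vec q_j=\lambda_j\vec q_j$ is an orthonormal eigenbasis, then $\vec Q^*(t)\vec q_j$ is an orthonormal eigenbasis of $\vec X(t)$. Its first $k$ entries are $\vec E_1^*\vec Q^*\vec q_j$. Write $\exp(t\vec X_0)=\vec Q\vec R$, so that $\vec Q^*=\vec R\exp(-t\vec X_0)$. Then
\begin{align*}
\vec E_1^*\vec Q^*\vec q_j=\vec E_1^*\vec R\,\vec q_j e^{-t\lambda_j}.
\end{align*}
Using $\vec Q^*=\vec R^{-*}\exp(t\vec X_0)$ instead (since $\vec Q=\exp(t\vec X_0)\vec R^{-1}$ and $\vec X_0$ is Hermitian), we get $\vec E_1^*\vec R^{-*}\vec q_je^{t\lambda_j}$. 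Because $\vec R^{-*}$ is lower triangular, its leading $k\times k$ block $\vec L^{-1}(t)$ satisfies $\vec E_1^*\vec R^{-*}=\vec L^{-1}\vec E_1^*$. Hence $\vec v_j(t)=\vec L^{-1}(t)e^{\lambda_jt}\vec v_j(0)$. Normalization $\sum_j\vec v_j(t)\vec v_j(t)^*=\vec I_k$ then forces $\vec L\vec L^*=\sum_je^{2\lambda_jt}\vec v_j(0)\vec v_j(0)^*$, which is a Cholesky factorization. This gives the formula for $\bmu(t)$ stated in Section~\ref{sec:todabackground}.

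Next, I verify $\bmu(t)\in\mathcal M_{k,N}$ using Theorem~\ref{thm:NdEquiv}\ref{item:Nd-meas}. The support points $x_j$ are unchanged. Each block transforms as $\vec V_j(t)=\vec L^{-1}e^{x_jt}\vec V_j(0)$, so its rank is preserved. The matrix $\vec M_d$ becomes $\mathrm{diag}(e^{x_jt}\vec I_{n_j})\,\vec M_d(\vec V(0),\vec X)\,(\vec I_{d+1}\otimes\vec L^{-*})$, a product of invertible factors with $\vec M_d(\vec V(0),\vec X)$. Hence it keeps full rank, and $\bmu(t)\in\mathcal M_{k,N}$.

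It remains to show that $\vec X(t)=\psi(\bmu(t))$, and this is the main obstacle, because $\vec X(t)$ is only known to be unitarily similar to $\vec X_0$, not yet banded. I would work with block Krylov spaces. Since $\vec R$ is upper triangular, $\vec R^{-1}\vec E_1=\vec E_1\vec L^{-*}$. This gives
\begin{align*}
\vec X(t)^i\vec E_1=\vec R\vec X_0^i\vec R^{-1}\vec E_1=\vec R\vec X_0^i\vec E_1\vec L^{-*}.
\end{align*}
Upper triangularity of $\vec R$ also maps the span of the first $m$ coordinate vectors into itself. Because $\vec X_0\in\JkN$, the span of the columns of $\vec X_0^i\vec E_1$ for $i\le j$ is exactly the first $\min\{(j+1)k,N\}$ coordinates. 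So the block Krylov spaces of $(\vec X(t),\vec E_1)$ are the same nested coordinate spaces. This means $\vec X(t)$ maps the first $jk$ coordinates into the first $(j+1)k$ coordinates, and since $\vec X(t)$ is Hermitian, it is block tridiagonal.

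For the off-diagonal blocks, $\vec E_{j+1}^*\vec X(t)^j\vec E_1$ equals a diagonal block of $\vec R$ (positive diagonal) times $\vec B_{j-1}\cdots\vec B_0$ times $\vec L^{-*}$. This is upper triangular with positive diagonal, or in row echelon form with positive pivots for the final block. By Lemma~\ref{lem:PowersJ}, the products $\vec B_{j-1}(t)\cdots\vec B_0(t)$ equal these matrices, and peeling off one factor at a time shows each $\vec B_j(t)$ has the required triangular form. Therefore $\vec X(t)\in\JkN$, and by Theorem~\ref{thm:Injective} $\vec X(t)=\psi(\bmu(t))$.
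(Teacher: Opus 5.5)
Your proof is correct. Its load-bearing part is the same as the paper's: everything comes from $\vec X(t)=\vec R(t)\vec X_0\vec R^{-1}(t)$ with $\vec R$ upper triangular with positive diagonal. Your invariant-coordinate-subspace (block Krylov) argument repackages the paper's observation that conjugation by $\vec R$ preserves the lower band. Peeling the products from Lemma~\ref{lem:PowersJ} gives $\vec B_j(t)=\vec R_{j+2,j+2}\,\vec B_j(0)\,\vec R_{j+1,j+1}^{-1}$, where $\vec R_{i,i}$ are the diagonal blocks of $\vec R$. This is exactly the paper's $\vec B_j=\vec R_j\vec B_j(0)\vec U_j$, followed by the same pivot-preservation check.

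The spectral-map material does no work and could be dropped, together with the closing appeal to Theorem~\ref{thm:Injective}. That material is the formula for $\bmu(t)$, which is Theorem~\ref{thm:MeasEvol}, and the verification $\bmu(t)\in\mathcal M_{k,N}$. Both are correct. On its own, this route could not have given $\vec X(t)\in\JkN$, because $\varphi$ is injective only on $\JkN$. Knowing that $\vec X(t)$ and $\psi(\bmu(t))$ share a spectral measure only makes them unitarily similar by a unitary fixing the first $k$ coordinate vectors. You noticed this, and your direct structural argument is what actually closes the proof.

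One small point: Lemma~\ref{lem:PowersJ} is stated for matrices in $\JkN$. When you apply it to $\vec X(t)$, note that its proof uses only block tridiagonality, which you have already established at that stage.
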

\begin{proof}
    The first equality $\vec X = \vec Q^* \vec X_0 \vec Q$ in Theorem~\ref{thm:todaSol} shows that $\vec X$ and $\vec X_0$ are similar, so \eqref{eq:toda} is an isospectral flow. The structure of the solution also guarantees that $\vec X$ remains Hermitian, and the second equality in \eqref{eq:todaSol} implies that $\vec X_{ij} = 0$ whenever $i - j > k$. Together, these properties show that $X(t)$ is banded with bandwidth $k$ for all $t$. 
    
    A closer look at $\vec X = \vec R \vec X_0\vec R^{-1}$ shows that the off-diagonal blocks $\{\vec B_j\}_{j=0}^{n-2}$ of $\vec X$ satisfy $\vec B_j = \vec R_j \vec B_j(0)\vec U_j$ where $\vec R_j$ and $\vec U_j$ are upper triangular matrices with positive diagonal entries. Let $p_i$ denote the pivot column of row $i$ in $\vec B_j(0)$, so that $(\vec B_j(0))_{i,\ell} = 0$ for $\ell < p_i$ and $p_1 < p_2 < \cdots$. For $\ell < p_i$, we have
    \begin{align*}
        (\vec R_j \vec B_j(0))_{i,\ell} = \sum_{k \geq i} (\vec R_j)_{i,k} (\vec B_j(0))_{k,\ell} = 0,
    \end{align*}
    since $(\vec B_j(0))_{k,\ell} = 0$ for all $k \geq i$, as $p_k \geq p_i > \ell$. A similar argument shows that right multiplication by $\vec U_j$ preserves zeros to the left of each pivot. Thus $\vec B_j$ has the same pivot structure as $\vec B_j(0)$, and the pivot entries satisfy
    \begin{align*}
        (\vec B_j)_{i,\,p_i} = (\vec R_j)_{i,i}\,(\vec B_j(0))_{i,\,p_i}\,(\vec U_j)_{p_i,\,p_i} > 0.
    \end{align*}
    We conclude that $\vec X \in \JkN$, completing the proof.
\end{proof}

The next result describes the time evolution of the spectral measure associated with $\vec X(t)$. It shows that the unnormalized weights evolve by an exponential scaling of the initial weights, after which normalization is achieved through the matrix $\vec L(t)$.
\begin{theorem}\label{thm:MeasEvol}
    Suppose that $\vec X(t)$ is a solution to the Toda flow, $\vec X(0) \in \mathcal J_{N,k}$, and denote the spectral measure of $\vec X(t)$ by 
    \begin{align*}
        \bmu_{\vec X(t)} = \sum_{j=1}^m \vec V_j(t) \vec V_j^*(t)\delta_{\lambda_j}, \quad \vec V_j(t)\in \mathbb R^{k\times \ell_j}.
    \end{align*}
    Then
    \begin{align*}
         \vec V_j(t) \vec V_j^*(t) = \vec L^{-1}(t)\left( e^{2\lambda_j t} \vec V_j(0)\vec V_j(0)^*\right) \vec L^{-*}(t),
    \end{align*}
    where $\vec L^{-1}(t) = \vec I_{N\times k}^* \vec R^{-*}(t)\vec I_{N\times k}$
    and $\vec R(t)$ is defined in Theorem~\ref{thm:todaSol}. Equivalently, $\vec L(t)$ is the lower triangular matrix satisfying $\sum_{j} e^{2\lambda_j t} \vec V_j(0)\vec V_j(0)^* = \vec L(t) \vec L(t)^*$.
\end{theorem}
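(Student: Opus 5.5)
We start from the representation in Theorem~\ref{thm:todaSol}. Write the eigendecomposition $\vec X_0=\vec U\Lambda\vec U^*$, so that the eigenvectors of $\vec X(t)=\vec Q^*(t)\vec X_0\vec Q(t)$ are the columns of $\vec Q^*(t)\vec U$. The spectral measure $\bmu_{\vec X(t)}$ is then determined by the $k\times N$ block $\vec E_1^*\vec Q^*(t)\vec U$, grouped by distinct eigenvalues as in \eqref{eq:SpectMapDef2}. In particular, for each distinct eigenvalue $\lambda_j$ we may take $\vec V_j(t)=\vec E_1^*\vec Q^*(t)\vec U_j$, where $\vec U_j$ collects the eigenvectors of $\vec X_0$ for $\lambda_j$. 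Since $\vec V_j(t)\vec V_j(t)^*$ does not depend on the choice of eigenbasis, this choice is harmless. Moreover, $\vec V_j(0)=\vec E_1^*\vec U_j$.

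Next we eliminate $\vec Q$ using $\exp(t\vec X_0)=\vec Q\vec R$. This gives $\vec Q^*=\vec R\exp(-t\vec X_0)$, and since $\vec Q$ is unitary also $\vec Q^*=\vec R^{-*}\exp(t\vec X_0)$ (because $\vec Q=\exp(t\vec X_0)\vec R^{-1}$ and $\vec X_0$ is Hermitian). We use the second form:
\begin{align*}
\vec E_1^*\vec Q^*\vec U_j=\vec E_1^*\vec R^{-*}\exp(t\vec X_0)\vec U_j=e^{\lambda_j t}\,\vec E_1^*\vec R^{-*}\vec U_j .
\end{align*}
The key structural point is that $\vec R^{-*}$ is lower triangular. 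Hence its first $k$ rows vanish beyond column $k$, and $\vec E_1^*\vec R^{-*}=\vec E_1^*\vec R^{-*}\vec E_1\vec E_1^*$. Setting $\vec L^{-1}:=\vec I_{N\times k}^*\vec R^{-*}\vec I_{N\times k}$, which is lower triangular with positive diagonal, we obtain
\begin{align*}
\vec V_j(t)=e^{\lambda_j t}\vec L^{-1}(t)\vec V_j(0),
\end{align*}
and the claimed formula for $\vec V_j(t)\vec V_j^*(t)$ follows by taking outer products. The notation $\vec L^{-1}$ is consistent, since the inverse of a lower triangular matrix restricts correctly: the leading block of $\vec R^{-*}$ is the inverse of the leading block of $\vec R^{*}$.

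For the equivalent characterization, we sum over $j$. Because $\bmu_{\vec X(t)}\in\mathcal P_k(\mathbb R)$, we have $\sum_j\vec V_j(t)\vec V_j(t)^*=\vec I_k$, which gives
\begin{align*}
\vec L^{-1}\Big(\sum_j e^{2\lambda_j t}\vec V_j(0)\vec V_j(0)^*\Big)\vec L^{-*}=\vec I_k,
\end{align*}
that is, $\sum_j e^{2\lambda_j t}\vec V_j(0)\vec V_j(0)^*=\vec L\vec L^*$. The left-hand side equals $\vec E_1^*\exp(2t\vec X_0)\vec E_1$, which is positive definite. Since $\vec L$ is lower triangular with positive diagonal, uniqueness of the Cholesky factorization identifies $\vec L(t)$. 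As a consistency check, one can also derive this directly: $\exp(2t\vec X_0)=\vec R^*\vec R$, and its leading $k\times k$ block is $(\vec R^*)_{11}(\vec R)_{11}$, again by triangularity.

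The main obstacle is bookkeeping rather than substance. We must be careful that $\vec R^{-*}$, and not $\vec R$, is the factor whose leading rows decouple. We must also make sure the identity $\vec Q^*=\vec R^{-*}e^{t\vec X_0}$ is used in place of $\vec R e^{-t\vec X_0}$, since the latter is upper triangular and would not truncate to the first $k$ columns. Finally, we note that the hypothesis $\vec X(0)\in\JkN$ is used only to guarantee that $\bmu_{\vec X(t)}$ is well defined and normalized, via the preceding corollary.
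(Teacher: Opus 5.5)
Your proof is correct and follows essentially the same route as the paper: you write the eigenvectors of $\vec X(t)$ as $\vec Q^*(t)$ applied to those of $\vec X_0$, rewrite $\vec Q^*=\vec R^{-*}\exp(t\vec X_0)$, and use the lower triangularity of $\vec R^{-*}$ to truncate to the leading $k\times k$ block $\vec L^{-1}$. Two small differences: choosing the eigenbasis $\vec Q^*\vec U(0)$ directly lets you avoid the paper's auxiliary unitary $\vec O$, and your normalization-plus-Cholesky argument proves the ``equivalently'' characterization, which the paper only asserts. One correction to your closing remark: the normalization $\sum_j \vec V_j(t)\vec V_j(t)^*=\vec I_k$ holds for any Hermitian $\vec X_0$ with $N\ge k$, so, as the paper itself notes, the hypothesis $\vec X(0)\in\mathcal J_{k,N}$ is not needed at all.
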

\begin{proof}
    Let $\vec X = \vec U \vec \Lambda \vec U^*$ be the eigendecomposition of $\vec X$, and let $\vec S_j\in \mathbb R^{N\times \ell_j}$ be a column selection matrix that extracts the columns of $\vec U$ corresponding to the eigenvalue $\lambda_j$. In other words, $\vec S_j$ satisfies
    \begin{align*}
        \vec X \vec U \vec S_j = \lambda_j \vec U \vec S_j.
    \end{align*}
    The evolution of the eigenvectors follows directly from equation \eqref{eq:toda}. In particular, 
    \begin{align*}
        \vec U = \vec Q^* \vec U(0) \vec O,
    \end{align*}
    where $\vec O(t)$ is a unitary matrix such that $\vec O_j(t) = \vec S_j^T\vec O(t)\vec S_j$ is also unitary. Since $\vec V_j$ is given by the first $k$ rows of the corresponding eigenvectors, i.e. $\vec V_j = \vec I_{N\times k}^* \vec U \vec S_j$, we have
    \begin{align}\label{eq:BandMeasPfEq1}
        \vec V_j = \vec I_{N\times k}^*\vec Q^* \vec U(0) \vec O \vec S_j.
    \end{align}

    Recall that $\exp(t\vec X_0) = \vec Q \vec R$ where $\vec Q$ is unitary and $\vec R$ is upper triangular, so $\vec Q^* = \vec R^{-*}\exp(t\vec X_0)$ which implies
    \begin{align}\label{eq:BandMeasPfEq2}
        \vec I_{N\times k}^* \vec Q^* = \vec I_{N\times k}^* \vec R^{-*}\exp(t\vec X_0) = \vec L^{-1} \vec I_{N\times k}^* \vec U(0)\exp(t\vec \Lambda)\vec U(0)^*, \quad \text{where} \quad \vec L^{-1} = \vec I_{N\times k}^*\vec R^{-*}\vec I_{N\times k}.
    \end{align}
    Combining both \eqref{eq:BandMeasPfEq1} and \eqref{eq:BandMeasPfEq2} with the fact that $\vec O \vec S_j = \vec S_j \vec O_j$, we find
    \begin{align*}
        \vec V_j = \vec L^{-1} \vec I_{N\times k}^* \vec U(0)\exp(t\vec \Lambda) \vec S_j \vec O_j = \vec L^{-1} \left(e^{\lambda_j t}\vec U_k(0)\vec S_j\right) \vec O_j = \vec L^{-1} \left(e^{\lambda_j t }\vec V_j(0)\right) \vec O_j,
    \end{align*}
    and therefore
    \begin{align*}
        \vec V_j \vec V_j^* =\vec L^{-1} \left( e^{2\lambda_j t} \vec V_j(0)\vec V_j^*(0)\right) \vec L^{-*},
    \end{align*}
    which completes our proof.
\end{proof}
\begin{remark}
    The result in Theorem~\ref{thm:MeasEvol} holds for any Hermitian initial data, however its importance is when $\vec X(0) \in \JkN$, since in that case $\bmu_{\vec X}$ can be used to recover the solution $\vec X$ using the inverse spectral theory developed in Section~\ref{subsec:InvSpectMap}.
\end{remark}
\begin{remark}
    When $k=1$, Theorem~\ref{thm:MeasEvol} reduces to the classical evolution of the spectral measure for Jacobi matrices.  
    In this case, each $\vec V_j(t)$ becomes a scalar $\vec v_{1,j}(t)$, and $\vec L(t)$ simplifies to  
    \begin{align*}
    L(t) = \Bigg(\sum_{i=1}^N e^{2\lambda_i t} v_{1,j}^2(0)\Bigg)^{1/2}.
    \end{align*}
    Substituting into the formula of Theorem~\ref{thm:MeasEvol}, we find the standard expression in \eqref{eq:JacSpectralMeasure}.
\end{remark}

\section*{Acknowledgements}

The authors thank Maxim Yattselev for pointing out several relevant references. This material is based upon work supported by NSF DMS-2306438 (TT). Any opinions, findings, and conclusions or recommendations expressed in this material are those of the author and do not necessarily reflect the views of the National Science Foundation.

\appendix

\section{Block Lanczos and Householder}\label{appendix:Algs}

\begin{algorithm}[H]
	\caption{Block Lanczos Algorithm}
    \label{a:BlockLanczos}
    \begin{algorithmic}[1]
        \Statex \textbf{Input:} Hermitian matrix $\vec A\in \mathbb{C}^{N\times N}$, initial block $\vec V\in \mathbb C^{N\times k}$, and number of iteration $n$.
        \State  $\vec V_1,\vec B_{-1} = \mathrm{qr}(\vec V)$
        \For{$j=0,1,\dots,n-1$} 
            \State $\vec Z = \vec A \vec V_{j+1} - \vec V_{j}\vec B_{j-1}^*$ \RightComment{if $j=0$, $\vec Z = \vec A\vec V_1$}
            \State $\vec A_j = \vec V_{j+1}^* \vec Z$
            \State $\vec Z = \vec Z-\vec V_{j+1}\vec A_j$
            \State $\vec Z = \vec Z-\sum_{i=0}^{j+1}\vec V_i\vec V_i^* \vec Z$ \RightComment{reorthogonalize (optional)}
            \State $\vec V_{j+2},\vec B_j = \mathrm{qr}(\vec Z)$\footnotemark
        \EndFor
        \State \Return{$\{\vec A_j\},\{\vec B_j\},\{\vec V_j\}$}
    \end{algorithmic}
\end{algorithm}
\footnotetext{%
If $\vec Z \in \mathbb{C}^{k\times k}$ has rank $r$, then
$\mathrm{qr}(\vec Z)$ returns $\vec Q\in\mathbb{C}^{k\times r}$ with orthonormal columns and $\vec R\in\mathbb{C}^{r\times k}$ upper triangular with positive diagonal entries.
}

\begin{algorithm}[H]
	\caption{Householder reduction to block tridiagonal form}
    \label{a:BlockHouseholder}
    \begin{algorithmic}[1]
        \Statex \textbf{Input:} Hermitian matrix $\vec A\in \mathbb{C}^{N\times N}$, and block size $k$.
        \For{$j=1,\dots,N-k-1$} 
            \State $\vec v = \vec A_{k+j:N,j}$\footnotemark
            \State $\vec u = |v_1| \|\vec v\|\vec e_1 /v_1 + \vec v$, where $v_1 = \vec e_1^*\vec v$
            \State $\vec w = \vec u/\|\vec u\|$
            \State $\vec A_{k+j,j:N} = \vec A_{k+j,j:N}-2\vec w(\vec w^*\vec A_{k+j,j:N)}$
            \State $\vec A_{j:N,k+j:N} = \vec A_{j:N,k+j:N} - 2(\vec A_{j:N,k+j:N} \vec w)\vec w^*$
        \EndFor
        \State \Return $\vec A_{\,|i-j|\le k}$
    \end{algorithmic}
\end{algorithm}
\footnotetext{%
The notation $\vec A_{p:q,r:s}$ denotes the submatrix of $\vec A$ consisting of rows $p,\dots,q$ and columns $r,\dots,s$.
}

\bibliographystyle{abbrv}
\bibliography{library}  

\end{document}